\DeclareMathOperator{\wts}{wts}
\newcounter{thecounter}
\numberwithin{thecounter}{section}
\newtheorem{lemma}[thecounter]{Lemma}
\newtheorem{proposition}[thecounter]{Proposition}
\newtheorem{theorem}[thecounter]{Theorem}
\newtheorem{corollary}[thecounter]{Corollary}
\theoremstyle{definition}
\DeclareMathOperator{\SL}{SL}
\DeclareMathOperator{\GL}{GL}
\DeclareMathOperator{\Sp}{Sp}
\DeclareMathOperator{\Aut}{Aut}
\DeclareMathOperator{\End}{End}
\DeclareMathOperator{\htt}{ht}
\DeclareMathOperator{\re}{{re}}
\DeclareMathOperator{\dep}{{depth}}
\renewcommand{\a}{\alpha}
\renewcommand{\b}{\beta}
\newcommand{\Z}{{\mathbb Z}}
\newcommand{\R}{{\mathbb R}}
\newcommand{\C}{{\mathbb C}}
\newcommand{\N}{{\mathbb N}}
\newcommand{\Q}{{\mathbb Q}}
\newcommand{\E}{{\mathbb E}}
\newcommand{\F}{{\mathbb F}}
\newcommand{\K}{{\mathbb K}}
\newcommand{\fh}{{\mathfrak{h}}}
\newcommand{\fg}{{\mathfrak{g}}}
\newcommand{\fn}{{\mathfrak{n}}}
\newcommand{\calU}{{\mathcal{U}}}
\begin{document}
\title{Chevalley groups over $\Z$:\\
A representation--theoretic approach}

\author{Abid Ali, Lisa Carbone and Scott H.\ Murray}
%\email{lisa.carbone@rutgers.edu}

\date{\today}

%%%%%%%%%%%%%%%%%%%%%%%%%%%%%%%%%%%%%%%%%%%%%%%%%%%%%%
% Long and short versions
% NB: rerun bibtex - the long version has references not in the short
%\includecomment{longver}\excludecomment{shortver}
\excludecomment{longver}\includecomment{shortver}

%\thanks{2000 Mathematics subject classification. Primary 20G44, 81R10; Secondary 22F50, 17B67.}
%\thanks{The second author's research is partially supported by the Simons Foundation, Mathematics and Physical Sciences-Collaboration Grants for Mathematicians, Award Number 422182. }

\begin{abstract} 
Let $G(\Q)$ be a  simply connected Chevalley group over $\Q$ corresponding to a simple Lie algebra $\mathfrak g$ over $\C$. 
Let $V$ be a finite dimensional faithful highest weight $\mathfrak g$-module and let $V_\Z$ be a Chevalley $\Z$-form of~$V$. 
Let  $\Gamma(\Z)$ be the subgroup of $G(\Q)$ that preserves $V_{\Z}$   and let $G(\Z)$ be the group of $\Z$-points of $G(\Q)$. 
Then $G(\Q)$ is \emph{integral} if  $G(\Z)=\Gamma(\Z)$. Chevalley's original work constructs a scheme-theoretic integral form of $G(\Q)$ which equals $\Gamma(\Z)$. Here we give a representation-theoretic proof of integrality of $G(\Q)$ using only the action of  $G(\Q)$ on $V$, rather than the language of group schemes. We discuss the challenges and open problems that arise in trying to extend this to a proof of integrality for Kac--Moody groups over $\Q$.
\end{abstract}

\maketitle

\begin{center}
{\it Dedication}
\end{center}
\begin{center}
To the memory of Nicolai Vavilov, whose beautiful papers are a great inspiration.
\end{center}
% For AMS journal templates
%\begin{shortdedication}
%To the memory of Nicolai Vavilov, whose beautiful papers are a great inspiration.
%\end{shortdedication}

\begin{longver}
\tableofcontents
\end{longver}

%%%%%%%%%%%%%%%%%%%%%%%%%%%%%%%%%%%%%%%%%%
%%%%%%%%%%%%%%%%%%%%%%%%%%%%%%%%%%%%%%%%%%
\section{Introduction}\label{S-intro}
In his seminal work \cite{Chevalley,Chevalley2},  Chevalley gave   a unified construction of semisimple linear algebraic groups over arbitrary fields by constructing (what was his notion of) an affine group scheme $G_\Z$ over $\Z$, corresponding  to  a semisimple Lie algebra $\frak g$ over $\C$ with  root system $\Delta$.
Chevalley's  affine group scheme  was later replaced by the modern definition due to   Demazure and Grothendieck \cite{SGA3}.

Let $V$ be a faithful finite dimensional highest weight  $\mathfrak{g}$-module with representation $\rho:\mathfrak{g}\to {\rm End}(V)$ and
let $L$ be the lattice generated by $\wts(V)$, the weights of $V$.
Then $L$  satisfies $Q\subseteq L\subseteq P$, where $Q$ is the root lattice and $P$ is the weight lattice of $\frak g$. Chevalley
\cite{Chevalley}
associated a group scheme $G_\Z$ to $(\Delta,L)$ and an integral form $V_\Z$  of $V$, which has the property that $V_\Z$ is stable under a Cartier--Kostant integral form $\mathcal{U}_\Z$ of the universal enveloping algebra $\mathcal{U}$ of $\mathfrak g$. Such a lattice in $V$ is called an {\it admissible lattice}. 
Let $G_\Z(\Q)$ be the group of $\Q$-rational points of $G_\Z$ and let
$$\Gamma(\Z) = \{g \in G_\Z(\Q)\mid g\cdot V_{\Z} = V_{\Z}\}$$ 
be the subgroup of $G_\Z(\Q)$ that preserves $V_{\Z}$. Chevalley \cite{Chevalley} showed that, up to canonical isomorphism, the group $\Gamma(\Z)$  depends only on ($\Delta$, $L$) and not on the choice of admissible lattice~$V_{\Z}$.

In this work, we are interested in those properties of the groups $G_\Z(\Q)$, $G_\Z(\Z)$ and $\Gamma(\Z)$ that are determined by their actions on $V$ and $V_\Z$. 

Let $\mathfrak h$ be a Cartan subalgebra of $\mathfrak g$ with dual $\mathfrak h^*$. Let $\Delta\subseteq\fh^*$ be the root system of $\fg$ with  set of positive roots (respectively negative roots) $\Delta_{+}$ (respectively $\Delta_{-}$) and simple roots $\alpha_1,\dots,\a_\ell$.  
An essential component of Chevalley's work (Theorem~\ref{Cbasis}) was to prove the existence of a basis   
$\mathcal{B}=\{x_{\a},h_{i}\mid \a\in\Delta,\ h_{i}\in \fh,\ i\in I \}$
for $\mathfrak g$ such that the structure constants with respect to $\mathcal{B}$ are integers.

One can verify (see Section~\ref{universal}) that for each root $\a\in\Delta$, the endomorphism $\rho(x_{\a})^n$ is zero for $n$ sufficiently large. It follows that for $t\in\Q$
\begin{eqnarray}\label{chidef}
   \chi_{\a}(t):=\exp(t\cdot \rho(x_{\a}))=I+t \rho(x_{\a})+\frac{t^2}{2}(\rho(x_{\a}))^2+\dots  
\end{eqnarray}
is a finite sum and hence a well defined automorphism of $V$.
Using the weight space decomposition $V=\bigoplus_{\mu\in\wts(V)} V_\mu$, we can also define automorphisms 
$h_i(t)$ for all $i\in I$ and $t\in \Q^{\times}$ by
\begin{eqnarray}\label{torele}
    h_i(t)\cdot v := t^{\langle\mu,\a_i^{\vee}\rangle }v
\end{eqnarray}
for all $v\in V_\mu$.

Following \cite{Carter}, \cite{CMT} and \cite{Steinberg}, we define Chevalley groups in the following way.
Given an integral domain $\F$, the Chevalley group $G(\F)$ is
$$G(\F) = \left\langle \chi_\a(u), h_i(t) \mid \a\in\Delta, u\in\F, i\in I, t\in \F^\times\right\rangle.$$
 Defining relations for $G(\F)$ were given by Steinberg \cite{Steinberg}. If $L=Q$, then $G(\F)$ is called the {\it adjoint Chevalley group} and if $L=P$,  $G(\F)$ is called the {\it simply connected Chevalley group}.
 
 %LC: removed this, it is too general. ({\color{magenta} For $S\subseteq\Delta$ and a subdomain $\F'\subseteq \F$, a subgroup $G_{S}(\F')$ is said to be generated over $\F'$, if $ G(\F')_{S} = \left\langle \chi_\a(u), h_i(t) \mid \a\in S, u\in\F', i\in I, t\in \F^\times\right\rangle$ })

Taking $\F=\Q$, we say that a subgroup of $G(\Q)$ is {\it generated over $\Z$} if it has a generating set of the form $\{\chi_{\a}(u), h_i(t)\mid \a\in S,\ u\in\Z,\  i\in I, \ t\in \Z^\times\}$
 for some subset $S\subseteq \Delta$.

 We let $G(\Z)$ denote the subgroup of $G(\Q)$ generated over $\Z$ by the elements 
 $$\{\chi_{\a}(u), \  h_i(t)\mid \a\in\Delta,\ u\in\Z,  i\in I, t\in \Z^\times\}.$$
 Then 
$$G(\Z)=\langle\chi_{\a}(u),\   h_i(-1)\mid \a\in\Delta,\ u\in\Z,\   i\in I \rangle.$$

For a fixed $V$ and $V_\Z$, we say that $G(\Q)$ is {\it integral} if  $$G(\Z)=\Gamma(\Z).$$
 
Chevalley's original work constructs a scheme-theoretic integral form $G_\Z(\Z)$ of $G(\Q)$ which equals $\Gamma(\Z)$.

 Our approach differs from that of Chevalley in that we use the definition of Chevalley groups as in \cite{Carter}, \cite{CMT}, \cite{Steinberg} described above. In our representation-theoretic setting,  the equality $G(\Z)=\Gamma(\Z)$ must be proven explicitly.

In certain cases, it is straightforward to verify this equality using properties of the Chevalley group and the underlying representation. See for example Section ~\ref{integrality} where there is a simple proof that $\SL_2(\Z)$ is the stabilizer of the standard lattice $\Z\oplus\Z$ in $\Q\oplus\Q$. See also \cite{Soule}, where Soul\'e showed that the Chevalley group $\E_7(\Z)$ (non-compact form) coincides with  $\E_7(\R)\cap \Sp_{56}(\Z)$ where $\Sp_{56}(\Z)$ is the stabilizer of the standard lattice, with the canonical symplectic form, in the fundamental representation of the Lie algebra $\frak e_7$ of dimension 56.

Here we give a proof that $G(\Z)=\Gamma(\Z)$ for  simply connected Chevalley groups $G(\Q)$ using only properties of the action of  the group $G(\Q)$ on the representation $V$. We first prove that (Theorem~\ref{stabilizer0})
$$U(\Q)\cap \Gamma(\Z)=U(\Z)$$
where 
\begin{align*}
U(\Q)&:=\langle \chi_{\alpha}(t)\mid \alpha\in\Delta^+,\ t\in\Q\rangle\\
\intertext{is the positive unipotent subgroup of $G(\Q)$ associated to a root system $\Delta$ and simple Lie algebra $\frak g$, and }
U(\Z)&:=\langle \chi_{\alpha}(t)\mid \alpha\in \Delta^+,\ t\in\Z\rangle
\end{align*}
is the subgroup of $U(\Q)$ generated over $\Z$.

We then use  a decomposition of $G(\Q)$ as $G({\mathbb{Q}})=G({\mathbb{Z}})B({\mathbb{Q}})$ (\cite[Theorem~18]{Steinberg} and Theorem~\ref{Bruhat}) to show that $G(\Z)=\Gamma(\Z)$ (Theorem~\ref{stabilizer1}).
Here $B(\Q)$ denotes the Borel subgroup of $G(\Q)$. The decomposition $G({\mathbb{Q}})=G({\mathbb{Z}})B({\mathbb{Q}})$ is an analog of the Iwasawa decomposition of $G(\Q_p)$, where $G({\mathbb{Z}})$ plays the role of the maximal compact subgroup  $G(\Z_p)$. 

Part of the motivation for this work is to extend the proof of integrality given here to integrality of representation theoretic Kac--Moody groups $G_V(\Q)$ over $\Q$. Here, $G_V(\Q)$ is  constructed with respect to an integrable highest weight module $V=V^{\lambda}$ for the underlying symmetrizable Kac--Moody algebra $\mathfrak{g}$, with dominant integral highest weight~$\lambda$.

Over a general commutative ring $R$, there is still no widely agreed upon definition of a Kac--Moody group $G(R)$, except in the affine case (\cite{Allcock}, \cite{Garland}). 
A proof of integrality for $G_V(\Q)$ would give a unique definition of representation theoretic Kac--Moody groups over $\Z$.
There are certain difficulties and open questions  in answering this question for Kac--Moody groups. The  issues that arise are discussed in Section~\ref{KMcase}.

In \cite{ACLM}, we addressed the question of integrality of certain subgroups of a Kac--Moody group $G_V(\Q)$ associated to a symmetrizable Kac--Moody algebra $\mathfrak{g}=\mathfrak g(A)$ with generalized Cartan matrix $A$, using only the definition of $G_V(\Q)$ acting on an integrable highest weight module $V$. In the Kac--Moody case, the construction of the group $G_V(\Q)$ depends on the choice of integrable highest weight representation $V=V^\lambda$ corresponding to a dominant integral weight $\lambda$. 
In this setting, we called a subgroup $M$ of $G_V(\Q)$  {\it integral} if $M\cap G_V(\Z)= M\cap \Gamma(\Z)$, where $G_V(\Z)$ is the subgroup of $G_V(\Q)$ generated over $\Z$.  This is equivalent to showing that for all $g\in M$, $g\cdot V_\Z \subseteq V_\Z$ implies that $g\in G_V(\Z)$. Let $U_V(\Q)$ be the positive unipotent subgroup of $G_V(\Q)$. In \cite{ACLM}, we proved that, for all  $w\in W$, the \emph{inversion subgroup}
 $$U_{(w)}(\Q)=\langle U_{\beta}(\Q)\mid \beta\in \Phi_{(w)}\rangle,$$ 
 is integral, where $
 \Phi_{(w)}$ is a subset of the set of positive  real roots $\Delta^{\re}_+$,  defined as follows
$$\Phi_{(w)}=\{\beta\in \Delta^{\re}_+\mid w^{-1}\beta\in \Delta^{-}\},$$ where $U_\beta(\Q)$ is the root group corresponding to $\beta\in \Delta^{\re}_+$. We briefly discuss this approach in Section~\ref{inversion}.

Unfortunately, we don't currently know how to extend this to a proof of integrality for unipotent subgroups of Kac--Moody groups in general. 
However, this method gives a proof of integrality of $U_V(\Q)$ when the generalized Cartan matrix $A$ has {\it finite type}. That is, when $A$ is a Cartan matrix and $G_V(\Q)$ coincides with the simply connected semisimple algebraic group associated to $\mathfrak{g}=\mathfrak g(A)$ over $\Q$.  To summarize this alternate approach to proving integrality, we discuss this proof briefly in Section~\ref{inversion}. 
%This proof cannot be extended to Kac--Moody groups. 
We believe that an analog of Theorem~\ref{stabilizer1} is needed to address the integrality question for Kac--Moody groups.

The authors are grateful to  Shrawan Kumar for his interest in this work and for helpful discussions.

%%%%%%%%%%%%%%%%%%%%%%%%%%%%%%%%%%%%%%%%%%
%%%%%%%%%%%%%%%%%%%%%%%%%%%%%%%%%%%%%%%%%%
\section{Preliminaries}\label{S-prelim}
Our primary reference for Chevalley groups and  related notions is \cite{Steinberg}. Other useful references are \cite{Bourbaki7-9, Carter,  Chevalley, Humphreys,VP}.

We make frequent use of the concepts of $\Z$-forms and $\R$-forms,  so we recall the definition:
Let $A$ denote an algebra,  an $A$-module, or a vector space, respectively, over $\C$,  and let $R$ be a subring of $\C$.
Then $B\subseteq A$ is an {\it $R$-form} of $A$ if $B$ is an algebra, a $B$-module, an $R$-module, or a vector space  over $R$ respectively, and the natural map $B\otimes_R \C\rightarrow A$ is an isomorphism.

%%%%%%%%%%%%%%%%%%%%%%%%%%%%%%%%%%%%%%%%%%%%%%%%%%%%%%
\subsection{Complex semisiple Lie algebras}
In this subsection we review some basic theory of   complex semisimple Lie algebras. 
Let $\mathfrak g$ be a semisimple Lie algebra over $\C$. 
We fix a \emph{Cartan subalgebra}, that is, a nilpotent subalgebra $\fh\subseteq\fg$ with $[\fh,\fh]\subseteq\fh$. 
Since $\fg$ is semisimple, $\fh$ is abelian.
We write $\ell=\dim\fh$ for the \emph{rank} of $\fg$.

For $\alpha\in \mathfrak{h}^*$, we set 
$$\mathfrak{g}_{\alpha} := \{x\in \mathfrak{g}\mid[h,x]=\alpha(h)x,\text{ for all 
$h\in \mathfrak{h}$}\}.$$
We have $\fg_0=\fh$. 
If $\a\ne0$ and $\fg_\a\ne0$, then 
we
call $\alpha$ a \emph{root} and $\fg_\a$ the corresponding \emph{root space}.
Let $\Delta$ denote the set of all roots.  

Let $(\circ,\circ)$ denote the Killing form on  $\mathfrak{g}$. This is a %positive definite 
nondegenerate symmetric  bilinear form whose restriction to  $\mathfrak{h}$ is also nondegenerate. We also use $(\circ,\circ)$ to denote the nondegenerate  form induced on $\mathfrak{h}^*$.
If we restrict the form on $\fh^\ast$ to the
the $\R$-vector space $E:=\R\Delta$, then $E$
is a Euclidean space and an $\R$-form of $\fh^*$.
For a root $\alpha$, we define a \emph{root reflection}
$$w_\a:\quad E\rightarrow E,\quad  x\mapsto x-\frac{2(\a,x)}{(\a,\a)}\a$$
and we have $w_\a(\Delta)\subseteq \Delta$, for all $\a\in\Delta$.
Hence the set of roots $\Delta$ forms a {\it root system} in $E$. 
Furthermore, $\Delta$ is \emph{crystallographic}, that is,
$\frac{2(\beta,\alpha)}{(\alpha,\alpha)}$ is an integer for every $\alpha,\beta\in \Delta$.
The matrix 
$$A:=\left(\dfrac{2(\alpha_i,\alpha_j)}{(\alpha_j,\alpha_j)} \right)_{i,j\in I}$$
is called the \emph{Cartan matrix} of~$\fg$.

Let $\pi: E\rightarrow \R$ be a fixed map such that $\ker(\pi)$ is a hyperplane that does not intersect $\Delta$. 
We say $\a\in\Delta$ is \emph{positive} (respectively\ \emph{negative}) if $\pi(\a)$ is positive
(respectively\ negative).
The set of positive (respectively\ negative) roots is denoted $\Delta^\pm$.
A positive root $\alpha$ is called a \emph{simple root}  if it cannot be written as the sum of two positive roots.
The simple roots form a basis of $E$ and are denoted  
$\alpha_1,\dots,\alpha_\ell$. 
For convenience we write $I=\{1,\dots,\ell\}$ for the indexing set of the simple roots.
Hence every  $\alpha\in\Delta^+$ has an expression of the form 
$\alpha=\sum_{i\in I} a_i\alpha_i$ where the $a_i\ge0$ are  integers.
The \emph{height} of the root $\a=\sum_{i=1}^{\ell} a_i\alpha_i$ is 
$$
  \htt(\a):= \sum_{i\in I} a_i.  
$$

The reflections $w_\a$, for $\a\in\Delta$, generate a \emph{reflection group} 
$$W=\langle w_\a\mid \a\in\Delta\rangle\subseteq \GL(E),$$
called the \emph{Weyl group} of $\fg$.  
Using the Cartan matrix $A=(A_{ij})_{i,j\in I}$, we define the \emph{Coxeter matrix}
$(c_{ij})_{i,j\in I}$ by
$$
c_{ij} := \begin{cases} 
  1 &\text{if $i=j$,}\\
  2 &\text{if $A_{ij}A_{ji}=0$,}\\
  3 &\text{if $A_{ij}A_{ji}=1$,}\\
  4 &\text{if $A_{ij}A_{ji}=2$,}\\
  6 &\text{if $A_{ij}A_{ji}=3$.}\\
 \end{cases}
$$
If we write $w_i:=w_{\a_i}$, then $W$ is a \emph{Coxeter group} with presentation
$$W=\langle w_1,\dots,w_\ell \mid (w_iw_j)^{c_{ij}}=1 \text{ for $i,j\in I$}
\rangle.$$
We can write $w\in W$ as a word of minimum length in the generators $w_1,\dots,w_\ell$ of the form
$w=w_{j_{1}}w_{j_{2}}\dots w_{j_{m}}$. This is called a \emph{reduced word} for $w$ and
$\ell(w):= m$ is called the \emph{length} of $w$.

\begin{lemma}\label{weroot}
For any root $\alpha\in\Delta$, we have $\alpha=w\alpha_i$ for some $w\in W$ and some simple root $\alpha_i$.
\end{lemma}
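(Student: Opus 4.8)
The plan is to prove that every root is Weyl-group-conjugate to a simple root by inducting on the height $\htt(\alpha)$, handling positive and negative roots by a reduction to the positive case. First I would observe that it suffices to prove the claim for $\alpha\in\Delta^+$: if $\alpha\in\Delta^-$, then $-\alpha\in\Delta^+$, so once we know $-\alpha=w\alpha_i$ we get $\alpha=-w\alpha_i=w w_i\alpha_i$, since $w_i\alpha_i=-\alpha_i$; here I use that $w_i$ sends $\alpha_i$ to its negative, which is immediate from the definition of the root reflection $w_{\alpha_i}$.

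So assume $\alpha\in\Delta^+$ and induct on $\htt(\alpha)$. The base case $\htt(\alpha)=1$ is trivial, since then $\alpha$ is itself a simple root and we may take $w=1$. For the inductive step, suppose $\htt(\alpha)\ge 2$. The key claim is that there is a simple root $\alpha_i$ with $(\alpha,\alpha_i)>0$. Granting this, set $\beta:=w_i\alpha=\alpha-\frac{2(\alpha,\alpha_i)}{(\alpha_i,\alpha_i)}\alpha_i$. Since $\frac{2(\alpha,\alpha_i)}{(\alpha_i,\alpha_i)}$ is a positive integer (using that $\Delta$ is crystallographic together with the positivity from the claim), $\beta$ is obtained from $\alpha$ by subtracting a positive multiple of $\alpha_i$; writing $\alpha=\sum_j a_j\alpha_j$ with $a_j\ge 0$, the coefficients of $\beta$ agree with those of $\alpha$ except in the $i$-th slot, which strictly decreases. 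Because $\alpha$ has height at least $2$, it has a positive coefficient on some simple root other than $\alpha_i$ (if $\alpha=a_i\alpha_i$ with $a_i\ge 2$ then $\alpha$ is not a root, as the only roots proportional to $\alpha_i$ are $\pm\alpha_i$), so $\beta$ still has a nonzero, hence positive, coefficient, forcing $\beta\in\Delta^+$ with $\htt(\beta)<\htt(\alpha)$. By the inductive hypothesis $\beta=w'\alpha_j$ for some $w'\in W$ and simple $\alpha_j$, and then $\alpha=w_i\beta=w_iw'\alpha_j$, completing the induction.

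The main obstacle is establishing the key claim that some simple root has strictly positive inner product with $\alpha$. Here I would argue by contradiction: if $(\alpha,\alpha_i)\le 0$ for all $i\in I$, then since $\alpha=\sum_i a_i\alpha_i$ with all $a_i\ge 0$ and not all zero, we would get $(\alpha,\alpha)=\sum_i a_i(\alpha,\alpha_i)\le 0$, contradicting that the form on $E$ is positive definite (as $E$ is Euclidean) and $\alpha\ne 0$. This is clean, so in fact the real content is just bookkeeping with the coefficients $a_j$. One alternative, should one prefer to avoid even this much, is to invoke the standard fact that $W$ acts transitively on the roots of each fixed length, but the height induction above is self-contained and uses only facts already recorded in the excerpt (crystallographicity, positive-definiteness of $(\circ,\circ)$ on $E$, and $w_i\alpha_i=-\alpha_i$).
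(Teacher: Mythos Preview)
Your argument is correct and is the standard height-induction proof of this classical fact. Note that the paper itself states Lemma~\ref{weroot} without proof, treating it as a well-known result from the theory of root systems, so there is no paper proof to compare against. Your proposal supplies exactly the expected argument; the only ingredient you invoke that is not explicitly recorded in the paper's preliminaries is that the sole roots proportional to a simple root $\alpha_i$ are $\pm\alpha_i$, but this is a standard property of reduced root systems and is safe to use here.
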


The semisimple Lie algebras over $\C$ are classified by their corresponding root systems.
The root systems are classified by Dynkin diagram or, equivalently, by Cartan type ($A_n$, $B_n$ and so on).
This also gives $\fg$ a \emph{root space decomposition}
$$\fg=\fh\oplus\bigoplus_{\a\in\Delta} \fg_\a$$
where each $\fg_\a$ is one dimensional, and a \emph{triangular decomposition} 
$$\mathfrak{g}=\mathfrak{n}^-\oplus\mathfrak{h}\oplus\mathfrak{n}^+,$$
where
$\mathfrak{n}^+ =\bigoplus_{\alpha\in\Delta^+}\mathfrak{g}_{\alpha}$ and $
\mathfrak{n}^- = \bigoplus_{\alpha\in\Delta^-}\mathfrak{g}_{\alpha}$ are nilpotent subalgebras.

%%%%%%%%%%%%%%%%%%%%%%%%%%%%%%%%%%%%%%%%%%
\subsection{Root lattice and weight lattice}
Since $\Delta$ is crystallographic, the roots generate a  lattice $$Q:=\Z\Delta=\bigoplus_{i\in I}\Z\a_i\subseteq\mathfrak{h}^{\ast}$$ called the \emph{root lattice} with $WQ=Q$.
 The Lie algebra $\fg$ is a $Q$-graded algebra since $[\fg_\a,\fg_\b] \subseteq \fg_{\a+\b}$, for all $\a,\b\in Q$.

\begin{lemma}[\cite{Steinberg}, Lemma 1]\label{coroot}  For each root $\alpha\in\Delta$, there is a unique
$h_\a\in\fh$ such that
$$(h,h_{\alpha})=\frac{2\a(h)}{(\alpha,\alpha)}$$
for all $h\in\fh$. If we write $h_i:=h_{\a_i}$,
then each $h_{\alpha}$ is a $\Z$-linear combination of the $h_i$.  
\end{lemma}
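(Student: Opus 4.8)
The plan is to split the statement into its two halves.

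The first half is immediate from nondegeneracy of the Killing form. Since the restriction of $(\circ,\circ)$ to $\fh$ is nondegenerate, the map $h\mapsto(h,\circ)$ is an isomorphism $\fh\to\fh^{*}$; as $h\mapsto\tfrac{2\a(h)}{(\a,\a)}$ is a linear functional on $\fh$, it equals $(\circ,h_{\a})$ for exactly one vector $h_{\a}\in\fh$.

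For the integrality assertion, put $M:=\sum_{i\in I}\Z h_{i}\subseteq\fh$; we want $h_{\a}\in M$ for every $\a\in\Delta$. I would proceed in three steps. First, by Lemma~\ref{weroot} write $\a=w\a_{i}$ with $w=w_{j_{1}}\cdots w_{j_{m}}$ a product of simple reflections; since $W$ permutes $\Delta$, each intermediate vector $w_{j_{k}}\cdots w_{j_{m}}\a_{i}$ is again a root, so it is enough to understand how the coroot changes under one simple reflection. Second, introduce the linear map $\tau_{j}\colon\fh\to\fh$, $\tau_{j}(h):=h-\a_{j}(h)\,h_{j}$, and establish the identity $\tau_{j}(h_{\g})=h_{w_{j}\g}$ for every $\g\in\Delta$; this is done by checking that $\tau_{j}(h_{\g})$ has inner product $\tfrac{2(w_{j}\g)(h)}{(w_{j}\g,w_{j}\g)}$ with every $h\in\fh$ and invoking the uniqueness from the first half, the inputs being $\a_{j}(h_{\g})=\tfrac{2(\g,\a_{j})}{(\g,\g)}$ (immediate from the definitions of $h_{\g}$ and of the induced form), the reflection formula for $w_{j}\g$, and the invariance of $(\circ,\circ)$ under $w_{j}$, which gives $(w_{j}\g,w_{j}\g)=(\g,\g)$. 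Third, observe that $\tau_{j}(M)\subseteq M$: indeed $\tau_{j}(h_{i})=h_{i}-\tfrac{2(\a_{i},\a_{j})}{(\a_{i},\a_{i})}h_{j}$, and the coefficient is an integer because $\Delta$ is crystallographic. Combining the three steps, $h_{\a}=\tau_{j_{1}}\!\bigl(h_{w_{j_{2}}\cdots w_{j_{m}}\a_{i}}\bigr)$, and induction on $m$ (with base case $h_{\a_{i}}=h_{i}\in M$) yields $h_{\a}\in M$.

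The one point requiring care is the normalization hidden in $\tau_{j}$: the literal transcription of the reflection formula, $h_{\g}\mapsto h_{\g}-\tfrac{2(\a_{j},\g)}{(\a_{j},\a_{j})}h_{\a_{j}}$, is \emph{false} whenever $\a_{j}$ and $\g$ have unequal squared lengths, and the correct operator is $\tau_{j}(h)=h-\a_{j}(h)\,h_{j}$. Pinning down this coefficient is essentially the whole content of the computation; the rest is bookkeeping.
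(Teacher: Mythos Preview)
Your argument is correct. The paper itself does not include a proof of this lemma; it is simply stated with a citation to Steinberg's Lemma~1. The approach you take---existence and uniqueness from nondegeneracy of the Killing form on $\fh$, then integrality by transporting $h_{\a_i}$ along simple reflections via the operators $\tau_j(h)=h-\a_j(h)\,h_j$ and invoking the crystallographic condition---is the standard one, and your closing remark about the normalization (the coefficient being $\a_j(h_\g)=\tfrac{2(\a_j,\g)}{(\g,\g)}$ rather than $\tfrac{2(\a_j,\g)}{(\a_j,\a_j)}$) correctly isolates the only genuine subtlety.
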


The \emph{coroot} $h_\a$ is also written $\a^\vee$, and $\Delta^\vee=\{\a^\vee\mid\a\in\Delta\}$.
We take $\langle\circ,\circ\rangle$ to be the natural pairing $\fh^\star\times \fh\to\C$ of $\fh^\star$ with $\fh$, that is, 
$$\langle \varphi,h\rangle := \varphi(h)$$
for all $\varphi\in\fh^\star$ and $h\in\fh$.
For $\a,\beta\in\Delta$, we have 
$$
 \langle \alpha, h_{\beta}\rangle=\dfrac{2(\beta,\alpha)}{(\alpha,\alpha)},   $$ 
and so $\langle\alpha_i,h_j\rangle =A_{ij}$, for all $i,j\in I.$
We denote the set of coroots by $\Delta^\vee$. 
The \emph{coroot lattice} is
$$ Q^\vee = \Z\Delta^\vee = \bigoplus_{i\in I} \Z h_i.$$

The \emph{weight lattice} $P$ is the dual of the coroot lattice $Q^\vee$ under the pairing $\langle\circ,\circ\rangle$. 
That is,
 $$P=(Q^\vee)^*=\{\lambda\in \mathfrak{h}^{\ast}\mid \langle \lambda,h\rangle \in\Z \text{ for all $h\in Q^\vee$}\}.$$
The weight lattice has a basis of \emph{fundamental weights} 
$\omega_1,\dots ,\omega_{\ell}$ such that
$\langle \omega_i,h_j\rangle=\delta_{ij}$,where
$$\delta_{ij}:=
\begin{cases} 1&\mbox{if } i=j\\
0&\mbox{if }i\neq j \end{cases}$$
is the Kronecker delta. 

Similarly the \emph{coweight lattice}
$$P^\vee=Q^*=\{ h\in \mathfrak{h} \mid \langle \lambda,h\rangle \in\Z\text{ for all $\lambda\in Q$}\}$$
has a basis of fundamental coweights 
$\omega^\vee_1,\dots ,\omega^\vee_{\ell}$ such that
$\langle \a_i,\omega^\vee_j\rangle=\delta_{ij}$.
If $\a\in Q$ then~$\a = \sum_{i\in I} \langle\a,\omega^\vee_i\rangle \a_i$,
and if $\lambda\in P$ then
$\lambda=\sum_{i\in I} \langle \lambda,h_i\rangle \omega_i$.

Since $\langle\alpha_j,\alpha_i^{\vee}\rangle=A_{ij}\in\Z$ for $i,j\in I$, we have
$\alpha_i\in P$,
so roots are weights and $Q\leq P$. 
Given a lattice $L$ such that $Q\subseteq L\subseteq P$, we can define a dual lattice $$L^*=\{ h\in \mathfrak{h} \mid \langle h,y\rangle \in\Z\text{ for all $y\in L$}\}.$$
Given lattices $Q\subseteq L\subseteq M\subseteq P$ we have $M^*\subseteq L^*$.
It is straightforward to show that 
$P/Q$ is a finite abelian group of order $\left|\det A\right|$ where $A$ is the Cartan matrix.
See Figure~\ref{F-lattices}.
%Also 
%$$P/Q \rightarrow Q^\vee/P^\vee,\qquad\omega_i +Q\mapsto h_i+P^\vee$$
%is an isomorphism of finite abelian groups {\color{blue} [check]}.
\begin{figure}[h]
\begin{center}
\begin{tikzcd}
                        & \mathfrak{h}                                   &  &  &  & \mathfrak{h}^\ast  &                                                              \\
\text{coweights}          & P^\vee \arrow[rrrrddd, dotted,leftrightarrow] \arrow[u, hook] &  &  &  & P \arrow[u, hook]  & \text{weights}  \\
                        &                                                &  &  &  & L \arrow[u, hook]  &                                                                \\
                        & L^* \arrow[rrrru, dotted,leftrightarrow] \arrow[uu, hook]  &  &  &  &                    &                                                                \\
\text{coroots} & Q^\vee \arrow[rrrruuu, dotted,leftrightarrow] \arrow[u, hook] &  &  &  & Q \arrow[uu, hook] & \text{roots}                                     
\end{tikzcd}
\end{center}
\caption{Lattices}\label{F-lattices}
\end{figure}
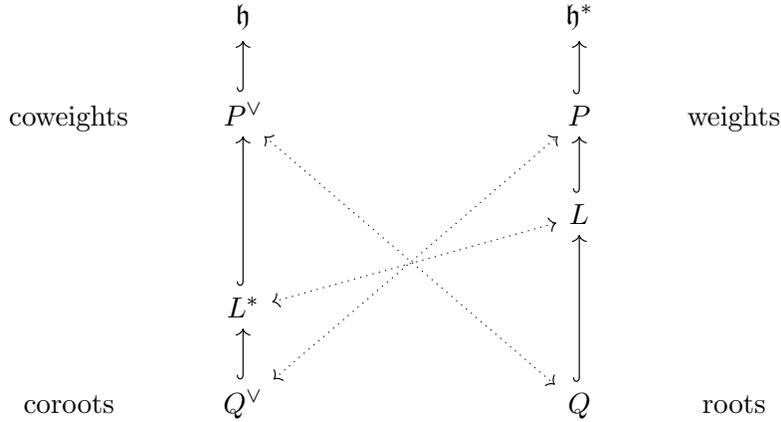

%%%%%%%%%%%%%%%%%%%%%%%%%%%%%%%%%%%%%%%%%%%%%%%%%%%%%%
\subsection{Chevalley basis}\label{SS-chevbas}
If $\a$ is a root, then the root space $\mathfrak{g}_{\alpha}$ has dimension one. 
For roots $\a$ and $\b$, 
recall that the constant $p_{\a\b}\in\Z_{\ge0}$ is defined so that
$\b,\b+\a,\dots, \b+p_{\a\b}\a\in\Delta$ but $\b+(p_{\a\b}+1)\b\notin\Delta$. 
Chevalley showed that there is a basis  for $\mathfrak g$ with the following properties.
\begin{theorem} \label{Cbasis} 
Every semisimple Lie algebra $\fg$ over $\C$
has a basis 
$$\mathcal{B}=\{x_{\a},h_{i}\mid \a\in\Delta,\ i\in I \}$$
such that
\begin{align*}
h_{i}&\in \fh,\\ 
[h_{i},x_{\a}]&=\a(h_i)  \\
[x_{\a},x_{-\a}]&=h_{\alpha}\in\bigoplus_{i=1}^{\ell} \Z h_{i},\\
[x_{\a},x_{\beta}]&=\begin{cases} n_{\a\beta} x_{\a+\beta}&\mbox{if }  \a+\beta\in\Delta,\ \a\neq -\beta\\
0& \mbox{otherwise, }  \end{cases}
\end{align*}
where $n_{\a\b}=\pm (p_{\a\b}+1)$,
and such that the linear map 
$$\theta:\quad \frak g\to\frak g, \quad h_i\mapsto - h_i, \quad x_\a\mapsto x_{-\a}$$
is a Lie algebra automorphism
\end{theorem}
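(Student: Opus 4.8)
The plan is to produce the Chevalley involution $\theta$ first, use it to fix the negative root vectors from a choice of positive ones, and then read off integrality of the structure constants from $\mathfrak{sl}_2$-representation theory on root strings.

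\emph{The involution.} Starting from the standard presentation of $\fg$ by Chevalley generators $e_i\in\fg_{\a_i}$, $f_i\in\fg_{-\a_i}$, $h_i:=h_{\a_i}\in\fh$ subject to $[e_i,f_i]=h_i$, $[h_i,e_j]=\langle\a_j,h_i\rangle e_j$, $[h_i,f_j]=-\langle\a_j,h_i\rangle f_j$ and the two Serre relations, I would first check that the assignment $e_i\mapsto f_i$, $f_i\mapsto e_i$, $h_i\mapsto -h_i$ respects all of these relations, so that by the Serre presentation it extends to a Lie algebra automorphism $\theta$ of $\fg$ with $\theta^2=\Id$ and $\theta|_\fh=-\Id$; since $\theta|_\fh=-\Id$, $\theta$ carries each root space $\fg_\a$ onto $\fg_{-\a}$.

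\emph{The root vectors and the easy relations.} For each $\a\in\Delta^+$ I would pick any $0\neq y_\a\in\fg_\a$ (taking $y_{\a_i}=e_i$ for simple roots) and set $y_{-\a}:=\theta(y_\a)$. Because $\dim\fg_{\pm\a}=1$ and the Killing form pairs $\fg_\a$ nondegenerately with $\fg_{-\a}$, the bracket $[y_\a,y_{-\a}]$ is a \emph{nonzero} multiple $c_\a h_\a$ of the coroot; rescaling $y_\a$ by $\lambda\in\C$ with $\lambda^2=c_\a^{-1}$ (and rescaling $y_{-\a}=\theta(y_\a)$ by the same $\lambda$) produces $x_\a,x_{-\a}$ with $\theta(x_\a)=x_{-\a}$ and $[x_\a,x_{-\a}]=h_\a$, automatic already for simple roots. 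Together with $\fh=\bigoplus_i\C h_i$ and the root space decomposition, $\mathcal B=\{x_\a,h_i\}$ is then a basis of $\fg$; the relations $[h_i,x_\a]=\a(h_i)x_\a$, $[x_\a,x_{-\a}]=h_\a$ and $[x_\a,x_\b]=0$ when $\a\neq-\b$ and $\a+\b\notin\Delta$ are immediate from the $Q$-grading, with $h_\a\in\bigoplus_i\Z h_i$ by Lemma~\ref{coroot}, and $\theta$ is the asserted automorphism.

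\emph{The structure constants.} It remains to analyse $[x_\a,x_\b]=n_{\a\b}x_{\a+\b}$ for $\a,\b,\a+\b\in\Delta$ with $\a\neq-\b$ and to show $n_{\a\b}=\pm(p_{\a\b}+1)$. I would apply the representation theory of the $\mathfrak{sl}_2$-triple $\langle x_\a,h_\a,x_{-\a}\rangle$ to the $\a$-string module $\bigoplus_k\fg_{\b+k\a}$: this module is irreducible, $h_\a$ acts on $\fg_{\b+k\a}$ with eigenvalue $\langle\b,h_\a\rangle+2k$, and pushing $\ad x_{\pm\a}$ up and down the string while applying the Jacobi identity to the triples $x_\a,x_{-\a},x_{\b+k\a}$ yields recursions among the $n_{\a,\b+k\a}$. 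Combining these with the symmetry $n_{-\a,-\b}=n_{\a\b}$ (obtained by applying $\theta$ to $[x_\a,x_\b]=n_{\a\b}x_{\a+\b}$) pins down $n_{\a\b}=\pm(p_{\a\b}+1)$, and in particular $n_{\a\b}\in\Z$, which completes the proof. The crux here is extracting the \emph{precise} value $|n_{\a\b}|=p_{\a\b}+1$, rather than merely $n_{\a\b}\neq0$: this demands careful tracking of the normalizations $[x_\g,x_{-\g}]=h_\g$ along a whole $\a$-string and of the signs produced by the Jacobi recursions, and is where essentially all of the content lies. A cleaner-looking but more delicate alternative, closer to Chevalley's original argument, is to transport $x_{\a_i}$ to an arbitrary root via a lift of the Weyl group to $\Aut(\fg)$ together with Lemma~\ref{weroot}, at the price of checking that the resulting $x_\a$ does not depend on the choices made.
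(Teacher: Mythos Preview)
The paper does not supply its own proof of Theorem~\ref{Cbasis}; it is quoted as a classical result due to Chevalley, with a pointer to \cite{CMT} for the sign conventions. Your outline is the standard textbook route (Humphreys \S25, Carter \S4.2): build the involution from the Serre presentation, use it to determine $x_{-\a}$ from $x_\a$, normalise so that $[x_\a,x_{-\a}]=h_\a$, and then read off $|n_{\a\b}|$ from the $\mathfrak{sl}_2$-string structure.

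There is, however, a genuine sign problem in your argument. For \emph{any} system of root vectors with $[x_\g,x_{-\g}]=h_\g$ for every root $\g$, the $\a$-string analysis produces the identity $n_{\a\b}\,n_{-\a,-\b}=-(r+1)^2$, where $r$ is the backward length of the $\a$-string through $\b$. Your convention $\theta(x_\a)=x_{-\a}$ gives, as you note, $n_{-\a,-\b}=n_{\a\b}$; combining the two forces $n_{\a\b}^2=-(r+1)^2$, hence $n_{\a\b}=\pm i(r+1)$, not $\pm(r+1)$. This is already visible in $\mathfrak{sl}_3$: with $y=[e_1,e_2]=E_{13}$ one has $\theta(y)=[f_1,f_2]=-E_{31}$ and $[y,\theta(y)]=-h_{\a_1+\a_2}$, so your constant $c_{\a_1+\a_2}$ equals $-1$ and the rescaling $\lambda$ must satisfy $\lambda^2=-1$. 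The fix is to use instead the automorphism $\sigma:e_i\mapsto -f_i$, $f_i\mapsto -e_i$, $h_i\mapsto -h_i$ and set $x_{-\a}:=-\sigma(x_\a)$; then the symmetry becomes $n_{-\a,-\b}=-n_{\a\b}$ and the same identity yields $n_{\a\b}^2=(r+1)^2$, as required. (This sign discrepancy is in fact already present in the theorem as printed: for a Chevalley basis with integral structure constants the involutive automorphism sends $x_\a$ to $-x_{-\a}$, not to $x_{-\a}$.)
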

Hence  the the structure constants with respect to $\mathcal{B}$ are integers.
Such a basis is called a \emph{Chevalley basis} and  $\theta$ is called the \emph{Chevalley involution}. The structure constants $n_{\a,\beta}\in\C$ depend only on the roots $\a,\b$ and a choice of sign. See \cite{CMT} for more details about which choices of signs are consistent.

\label{SS-simple}
  Let $\frak g$ be a semisimple Lie algebra over $\C$ with Chevalley basis $\mathcal B$ as in Theorem~\ref{Cbasis}. Let
$\mathfrak{g}_{\mathbb{Z}}$
%=\{x\in\frak g\mid x=\sum_{\a\in\Delta} a_{\a}x_{\a}+ \sum_{i=1}^{\ell}b_{\a_i}h_i \text{ for } a_{\a},\ b_{\a_i}\in\Z\}$$
be the $\Z$-span of $\mathcal B$.  Then
$$\mathfrak{g}_{\mathbb{Z}}=\mathfrak{h}_{\mathbb{Z}}\oplus\bigoplus_{\a\in\Delta}\Z x_{\a},$$
where $\mathfrak{h}_{\mathbb{Z}} =\mathfrak{h}\cap \mathfrak{g}_{\mathbb{Z}}$ is the $\Z$-span of the set of all corrots $h_{\a}:=[x_{\a},x_{-\a}]$ for $\a\in\Delta$.
Then $\mathfrak{g}_{\mathbb{Z}}$ is a lattice, that is, a free $\Z$-module, and the Lie bracket of any two elements of $\mathfrak{g}_{\mathbb{Z}}$ lies in $\mathfrak{g}_{\mathbb{Z}}$. 
Thus $\mathfrak{g}_{\mathbb{Z}}$ is a Lie ring. 
The subgroup $\mathfrak{h}_{\mathbb{Z}}$ is   a free abelian group with basis $h_i$, where ~$\a_i$ are the simple roots.
Furthermore the canonical map
~$\frak g_{\Z}\otimes_{\Z}\C\to \frak g$ is an isomorphism, so~$\frak g_{\Z}$ is a $\Z$-form of~$\frak g$.

%%%%%%%%%%%%%%%%%%%%%%%%%%%%%%%%%%%%%%%%%%%%%%%%%%%%%%
\subsection{The Cartier--Kostant  integral form}\label{CSform}
Fix a Chevalley basis $\mathcal{B}=\{x_{\a},h_{i}\mid \a\in\Delta,\ i\in I \}$.
Let $\mathcal{U}$ be the universal enveloping algebra of $\fg$ and 
identify $\mathfrak{g}$ with its image in $\mathcal{U}$. 
Define
$$\left (\begin{matrix}
h \\ m\end{matrix}\right):= \frac{h(h-1)\dots (h-(m-1))}{m!}\quad\text{and}\quad x_\a^{(m)} := \frac{(x_\a)^m}{m!}$$ 
for $h\in\fh$, $\a\in\Delta$, $m\geq 0$. Let 
\begin{itemize}
\item  ${\mathcal U}^+_{{\mathbb{Z}}}\subseteq {\mathcal U}$ be the ${\mathbb{Z}}$-subalgebra generated by $x^{(m)}_{\a}$ for $\a\in \Delta^+$
and
$m\geq 0$,  
\item  ${\mathcal U}^-_{{\mathbb{Z}}}\subseteq {\mathcal U}$ be the ${\mathbb{Z}}$-subalgebra generated by $x^{(m)}_{\a}$ for $\a\in \Delta^-$
and
$m\geq 0$,  
\item  ${\mathcal U}^0_{{\mathbb{Z}}}\subseteq {\mathcal U}$ be the ${\mathbb{Z}}$-subalgebra generated by $\left (\begin{matrix}
h_i \\ m\end{matrix}\right )$ for $i\in I$ and $m\geq 0$, 
\item ${\mathcal U}_{{\mathbb{Z}}}\subseteq {\mathcal U}$ be the ${\mathbb{Z}}$-subalgebra generated by $\calU^+_\Z$, $\calU^-_\Z$, and $\calU^0_\Z$.
\end{itemize}

We give the positive roots a fixed linear ordering compatible with height. That is, $\htt(\a)<\htt(\b)$ implies $\a<\b$.
\begin{proposition}[\cite{Kostant}]
$$ B^0=\left\{\prod_{i\in I} \binom{h_i}{m}  \;\Bigg\vert\; m_i \in \Z_{\ge0} \right\}$$
is a $\Z$-basis of $\calU_\Z^0$, and
$$B^\pm:= \left\{ \prod_{\a\in\Delta^+} x_{\pm\a}^{(m_\a)} \;\Bigg\vert\; m_\a\in \Z_{\ge0} \right\}$$ 
is a $\Z$-basis of $\calU_\Z^\pm$, where the product is written with respect to the fixed linear ordering.
The multiplication map
$$ \calU_\Z^-\otimes \calU_\Z^0\otimes\calU_\Z^+ \rightarrow \calU_\Z$$
is an isomorphism, and so
$$ \{ b_-b_0b_+ \mid b_-\in B^-, b_0\in B^0, b_+\in B^+\}$$
is a $\Z$-basis of $\calU_\Z$.
\end{proposition}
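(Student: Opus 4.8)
The plan is to prove the three assertions in turn --- the basis of $\calU_\Z^0$, the bases of $\calU_\Z^\pm$, and the triangular decomposition --- with a single technical engine running underneath all of them: a family of \emph{integral straightening identities} that reorder any product of the generators $x_\a^{(m)}$, $\binom{h_i}{m}$ into the prescribed PBW shape with integer coefficients. Linear independence is cheap in every case and comes from PBW over $\C$; the work is entirely in spanning, i.e. in closure under multiplication.

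\emph{The $\calU_\Z^0$ part.} Viewing $\calU^0=U(\fh)$ as the polynomial ring $\C[h_1,\dots,h_\ell]$, the monomial $\prod_{i\in I}\binom{h_i}{m_i}$ has leading term $\big(\prod_i m_i!\big)^{-1}\prod_i h_i^{m_i}$, so the elements of $B^0$ are $\C$-linearly independent. Since $\fh$ is abelian they commute, and closure of their $\Z$-span under multiplication reduces to the single-variable fact that $\binom{h_i}{m}\binom{h_i}{n}$ is a $\Z$-combination of the $\binom{h_i}{k}$, which is a standard binomial identity with integer coefficients. Hence $B^0$ is a $\Z$-basis of $\calU_\Z^0$.

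\emph{The $\calU_\Z^\pm$ part.} By the Chevalley involution $\theta$ it suffices to treat $\calU_\Z^+$. Applying PBW to $\np$ with the basis $\{x_\a\mid\a\in\Delta^+\}$ and dividing the ordered monomials by $\prod_\a m_\a!$ shows $B^+$ is a $\C$-basis of $\calU^+$, hence $\Z$-linearly independent; and $B^+\subseteq\calU_\Z^+$ by definition. So it remains to show the $\Z$-lattice $\calV^+:=\bigoplus_{b\in B^+}\Z b$ is a subalgebra: then $\calV^+$ contains every generator $x_\a^{(m)}$, forcing $\calU_\Z^+\subseteq\calV^+$, while $\calV^+\subseteq\calU_\Z^+$ is clear. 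Closure under multiplication follows, by induction on (total degree, number of inversions relative to the fixed ordering of $\Delta^+$), from the basic straightening step: for $\a,\b\in\Delta^+$ with $\b>\a$,
\[
x_\b^{(n)}\,x_\a^{(m)}=x_\a^{(m)}\,x_\b^{(n)}+\sum c\;x_\a^{(m')}\,x_{\g_1}^{(k_1)}\cdots x_{\g_r}^{(k_r)}\,x_\b^{(n')},
\]
with $c\in\Z$, each $\g_t=i_t\a+j_t\b$ for integers $i_t,j_t\ge1$, and $m'<m$, $n'<n$; the constraint $\htt(\a)<\htt(\g_t)<\htt(\a)+\htt(\b)$ makes the induction terminate. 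Such a formula holds over $\Z$ because the structure constants $n_{\a\b}$ are integers (Theorem~\ref{Cbasis}) and all factorial denominators introduced by the divided powers cancel; the clean way to see the cancellation is to carry it out inside the rank $\le 2$ subalgebra generated by $x_{\pm\a},x_{\pm\b}$, where it is a finite explicit computation (this is Kostant's lemma; see \cite{Kostant}).

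\emph{The triangular decomposition.} Injectivity of $\calU_\Z^-\otimes_\Z\calU_\Z^0\otimes_\Z\calU_\Z^+\to\calU_\Z$ is inherited from the $\C$-linear PBW isomorphism $\calU^-\otimes_\C\calU^0\otimes_\C\calU^+\to\calU$: the elements $b_-b_0b_+$ with $b_\pm\in B^\pm$, $b_0\in B^0$ are $\C$-linearly independent, since $B^0$ is $\C$-linearly independent in $\calU^0$ and PBW for $\fg$ gives independence of $\{b_-\,p\,b_+\}$ as $p$ runs over a $\C$-basis of $\calU^0$. For surjectivity onto $\calU_\Z$ it is enough, as $\calU_\Z$ is generated as a $\Z$-algebra by $\calU_\Z^-,\calU_\Z^0,\calU_\Z^+$, to push any product of generators into the order $(-,0,+)$. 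The moves $\binom{h_i}{m}\,x_{\pm\a}^{(n)}=x_{\pm\a}^{(n)}\,\binom{h_i\mp n\langle\a,h_i\rangle}{m}$ are integral because $\binom{h_i-c}{m}$ is a $\Z$-combination of the $\binom{h_i}{k}$ for $c\in\Z$; the move $x_{+\a}^{(m)}\,x_{-\b}^{(n)}\in\calU_\Z^-\calU_\Z^0\calU_\Z^+$ for $\a,\b\in\Delta^+$ is once more an integral straightening identity --- for $\a=\b$ the classical $\mathfrak{sl}_2$ formula $e^{(m)}f^{(n)}=\sum_k f^{(n-k)}\binom{h-m-n+2k}{k}e^{(m-k)}$, and for $\a\ne\b$ a reduction to rank $\le 2$ as above. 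Together these give that $\{b_-b_0b_+\mid b_-\in B^-,\,b_0\in B^0,\,b_+\in B^+\}$ is a $\Z$-basis of $\calU_\Z$.

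\emph{Main obstacle.} Everything hinges on the integral straightening identities of Stage 2 and the $x_{+\a}^{(m)}x_{-\b}^{(n)}$ case of Stage 3; the difficulty there is not conceptual but the bookkeeping of factorial denominators. The honest and economical route is to localize each such identity to the rank-one and rank-two root subalgebras, where it becomes a finite explicit computation, and to invoke \cite{Kostant} (and Steinberg's treatment) for the details rather than reproduce all cases here.
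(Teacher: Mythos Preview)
The paper gives no proof of this proposition; it is stated with attribution to \cite{Kostant} as background and no argument appears. Your sketch is exactly the standard Kostant proof that the citation points to --- PBW over $\C$ for linear independence, integral straightening identities for spanning, and reduction to rank $\le 2$ subalgebras for the delicate commutator formulas --- so there is nothing in the paper itself to compare against beyond the reference, which you are faithfully summarising.

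Two small slips, neither fatal. First, the sign in your $h$--$x$ commutation is reversed: from $h_ix_\a=x_\a(h_i+\langle\a,h_i\rangle)$ one gets $\binom{h_i}{m}x_{\pm\a}^{(n)}=x_{\pm\a}^{(n)}\binom{h_i\pm n\langle\a,h_i\rangle}{m}$, not $\mp$; this is harmless since only integrality of the shift matters. Second, your height bound on the correction roots $\g_t=i_t\a+j_t\b$ with $i_t,j_t\ge1$ goes the wrong way --- one has $\htt(\g_t)\ge\htt(\a)+\htt(\b)$, not $<$. The induction still terminates, because the correction terms in $x_\b^{(n)}x_\a^{(m)}-x_\a^{(m)}x_\b^{(n)}$ lie in strictly lower filtration degree in $\calU^+$, so one inducts on the pair (filtration degree, number of inversions) rather than on height.
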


\begin{theorem}[\cite{Chevalley,Kostant}]
$\calU_\Z$ is a $\Z$-form of $\calU$ and 
$\calU_\Z\cdot \fg_\Z=\fg_\Z$.
\end{theorem}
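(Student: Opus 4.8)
The plan is to establish the two assertions --- that $\calU_\Z$ is a $\Z$-form of $\calU$, and that $\calU_\Z\cdot\fg_\Z=\fg_\Z$ --- separately, drawing on the explicit $\Z$-basis furnished by the preceding Proposition.

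First I would prove the $\Z$-form statement. By definition, I must show that the canonical map $\calU_\Z\otimes_\Z\C\to\calU$ is an isomorphism. Surjectivity is easy: the image contains $\fg_\Z$ (since $x_\a=x_\a^{(1)}$ and $h_i=\binom{h_i}{1}$ lie in $\calU_\Z$), hence contains all of $\fg$ after extending scalars, and $\fg$ generates $\calU$. For injectivity, I would use the explicit $\Z$-basis of $\calU_\Z$ given in the Proposition, namely $\{b_-b_0b_+\}$ with $b_\pm\in B^\pm$, $b_0\in B^0$. It suffices to check this set is $\C$-linearly independent in $\calU$. This follows from the Poincaré--Birkhoff--Witt theorem: the monomials $\prod_{\a\in\Delta^-}x_\a^{(m_\a)}\cdot\prod_i\binom{h_i}{m_i}\cdot\prod_{\a\in\Delta^+}x_\a^{(m_\a)}$ are, up to nonzero rational scalars (coming from the factorials and from expanding the binomial coefficients $\binom{h_i}{m}$ as polynomials of degree exactly $m$ in $h_i$ with leading term $h_i^m/m!$), an invertible triangular change of basis away from the standard PBW monomial basis $\prod f^{a}\prod h^{b}\prod e^{c}$ of $\calU$. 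Hence $\{b_-b_0b_+\}$ is a $\C$-basis of $\calU$, which gives injectivity; the multiplication isomorphism $\calU_\Z^-\otimes\calU_\Z^0\otimes\calU_\Z^+\to\calU_\Z$ from the Proposition then transports to the analogous statement over $\C$, so $\calU_\Z\otimes_\Z\C\cong\calU$ compatibly with multiplication.

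Second I would prove stability: $\calU_\Z\cdot\fg_\Z\subseteq\fg_\Z$ (the reverse inclusion being clear since $1\in\calU_\Z$). Since $\calU_\Z$ is generated as a ring by the divided powers $x_\a^{(m)}$ ($\a\in\Delta$, $m\ge0$) and the binomials $\binom{h_i}{m}$ ($i\in I$, $m\ge0$), and since $\fg_\Z$ is spanned over $\Z$ by the Chevalley basis $\mathcal B$, it is enough to check that $x_\a^{(m)}\cdot y\in\fg_\Z$ and $\binom{h_i}{m}\cdot y\in\fg_\Z$ for each $y\in\mathcal B$. Here the action is the adjoint action, so $x_\a^{(m)}\cdot y = \frac{1}{m!}(\ad x_\a)^m(y)$. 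For $m=0$ this is $y$; for $m\ge1$, since $\ad x_\a$ already preserves $\fg_\Z$ (structure constants are integers, by Theorem~\ref{Cbasis}), the only issue is the denominator $m!$. The key point is that for each fixed $y$ the vector $(\ad x_\a)^m(y)$ is already divisible by $m!$ inside $\fg_\Z$; this is a finite, representation-theoretic computation carried out by reducing to $\mathfrak{sl}_2$-triples $\{x_\a,x_{-\a},h_\a\}$ and using the known action of divided powers of the raising operator on the finite-dimensional $\mathfrak{sl}_2$-submodule of $\fg$ generated by $y$, where the relevant coefficients $\binom{k}{m}$ are integers. Similarly $\binom{h_i}{m}\cdot y = \binom{\langle\mu,h_i\rangle}{m}y$ for $y$ in the $\mu$-weight space, and $\binom{\langle\mu,h_i\rangle}{m}\in\Z$ since $\langle\mu,h_i\rangle\in\Z$ for every weight $\mu\in\wts(\fg)\subseteq Q\subseteq P$.

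I expect the main obstacle to be the second part --- the divisibility argument showing $(\ad x_\a)^m(y)\in m!\,\fg_\Z$. The cleanest route is the $\mathfrak{sl}_2$-reduction: decompose $\fg$ as a module over the $\mathfrak{sl}_2$ spanned by $\{x_\a,x_{-\a},h_\a\}$, observe that $\fg_\Z$ restricts to an admissible lattice for each such $\mathfrak{sl}_2$ (its divided-power enveloping algebra acts by integer matrices in the standard basis of weight vectors), and invoke the standard fact that in a finite-dimensional $\mathfrak{sl}_2$-module with a weight basis, $e^{(m)}=\frac1{m!}e^m$ sends basis vectors to integer combinations of basis vectors. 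The one subtlety is matching the Chevalley basis vectors in $\fg$ to such weight bases of the $\mathfrak{sl}_2$-constituents with integrality preserved, which is exactly the content of Chevalley's original structure-constant analysis; I would either cite \cite{Chevalley, Kostant, Steinberg} for this or spell out the short $\mathfrak{sl}_2$ lemma.
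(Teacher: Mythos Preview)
The paper does not actually prove this theorem: it is stated with citations to \cite{Chevalley,Kostant} and no argument is given. So there is no in-paper proof to compare against; your proposal stands or falls on its own.

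On its own it is correct and is essentially the classical argument found in those references and in Steinberg's lectures. For the first assertion, your observation that the Kostant basis $\{b_-b_0b_+\}$ differs from the PBW monomial basis by an upper-triangular change with nonzero rational diagonal (coming from the leading terms $x_\a^m/m!$ and $h_i^m/m!$) is exactly what is needed to deduce $\C$-linear independence, and hence that $\calU_\Z\otimes_\Z\C\to\calU$ is an isomorphism. For the second assertion, your reduction to the generators $x_\a^{(m)}$ and $\binom{h_i}{m}$ acting on Chevalley basis vectors, together with the $\mathfrak{sl}_2$-string argument, is the standard route. The subtlety you flag --- that the Chevalley basis vectors along an $\a$-root string really do form an admissible $\mathfrak{sl}_2$-lattice --- is precisely what Chevalley's choice of structure constants $n_{\a\b}=\pm(p_{\a\b}+1)$ guarantees; one can also bypass the general $\mathfrak{sl}_2$ lemma by noting that in finite type root strings have length at most~$4$, so only $m\le3$ needs checking, and the required divisibility of $\prod_{k=0}^{m-1} n_{\a,\b+k\a}$ by $m!$ is a short direct verification.
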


%%%%%%%%%%%%%%%%%%%%%%%%%%%%%%%%%%%%%%%%%%%%%%%%%%%%%%
\subsection{Admissible lattices in $\fg$  and root data} 
As above, we fix a Chevalley basis $\mathcal{B}=\{x_{\a},h_{i}\mid \a\in\Delta,\ i\in I \}$.
We define an {\it admissible lattice} in $\fg$ to be a $\Z$-form of $\fg$ that is preserved by $\calU_\Z $. 
Given a lattice $L$ such that $Q\subseteq L\subseteq P$, we can construct an admissible lattice 
$$\fg_L := L^* + \Z\{x_\a\mid \a\in\Delta\}. $$ 
\begin{theorem}
Every admissible lattice in $\fg$ has the form $\fg_L$ for some $Q\subseteq L\subseteq P$.
\end{theorem}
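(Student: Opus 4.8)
The plan is to reconstruct an admissible lattice $M\subseteq\fg$ from its pieces in the weight spaces of the adjoint representation, using the Cartier--Kostant form $\calU_\Z$ to handle both the weight decomposition and the root-space pieces, and lattice duality for the Cartan piece. The first step is to show that $M$ respects the weight-space decomposition, i.e.
$$M=(M\cap\fh)\oplus\bigoplus_{\a\in\Delta}(M\cap\fg_\a).$$
The weights of the adjoint module are the finitely many elements $\{0\}\cup\Delta$, the $\mu$-weight space being $\fg_\mu$ (with $\fg_0=\fh$), and $\binom{h_i}{m}\in\calU^0_\Z$ acts on $\fg_\mu$ by the integer $\binom{\langle\mu,h_i\rangle}{m}$. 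Since integer-valued polynomials separate points of $\Z^\ell$, for each weight $\mu$ there is an element $e_\mu\in\calU^0_\Z$ acting as the identity on $\fg_\mu$ and as $0$ on $\fg_\nu$ for every other weight $\nu$; applying these $e_\mu$ to the $\calU_\Z$-stable lattice $M$ gives the displayed splitting.

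Next I would pin down the root-space pieces. Since $\dim\fg_\a=1$, write $M\cap\fg_\a=\Z c_\a x_\a$ with $c_\a\in\C^\times$, well defined up to sign. Using that $\{x_\a,h_\a,x_{-\a}\}$ is an $\mathfrak{sl}_2$-triple and $x_{\pm\a}^{(2)}\in\calU_\Z$, the identities $x_\a^{(2)}\cdot x_{-\a}=-x_\a$ and $x_{-\a}^{(2)}\cdot x_\a=-x_{-\a}$ for the adjoint $\calU_\Z$-action give $c_\a/c_{-\a}\in\Z$ and $c_{-\a}/c_\a\in\Z$, hence $c_\a=\pm c_{-\a}$. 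Moreover $c_{-\b}h_\b=[x_\b,c_{-\b}x_{-\b}]\in M\cap\fh$, so applying $x_\a=x_\a^{(1)}\in\calU_\Z$ gives $-c_{-\b}\langle\a,h_\b\rangle x_\a\in M\cap\fg_\a=\Z c_\a x_\a$, i.e. $c_\b\langle\a,h_\b\rangle\in c_\a\Z$ for all roots $\a,\b$. Since for non-proportional, non-orthogonal roots $\a,\b$ at least one of $\langle\a,h_\b\rangle,\langle\b,h_\a\rangle$ equals $\pm1$, combining these relations with $c_\a=\pm c_{-\a}$ and propagating across the connected root system (invoking also the higher divided powers $x_\gamma^{(m)}$ and the Chevalley commutator formula where needed) shows all the $c_\a$ agree up to sign; the normalization in the notion of admissible lattice (a Chevalley basis vector spanning $M\cap\fg_{\tilde\a}$ over $\Z$ on the highest-root space) then forces $c_\a=\pm1$, so $M\cap\fg_\a=\Z x_\a$ for every $\a\in\Delta$.

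Finally I would identify $M_0:=M\cap\fh$, a full-rank sublattice of $\fh$. From $h_{\a_i}=[x_{\a_i},x_{-\a_i}]\in M_0$ we get $Q^\vee\subseteq M_0$; conversely, for $h\in M_0$ and $\a\in\Delta$ the relation $[x_\a,h]=-\langle\a,h\rangle x_\a\in M\cap\fg_\a=\Z x_\a$ forces $\langle\a,h\rangle\in\Z$, hence $\langle\gamma,h\rangle\in\Z$ for all $\gamma\in Q$, so $M_0\subseteq Q^*=P^\vee$. Thus $Q^\vee\subseteq M_0\subseteq P^\vee$; setting $L:=M_0^*$ we obtain $Q=(P^\vee)^*\subseteq L\subseteq(Q^\vee)^*=P$, and $L^*=M_0^{**}=M_0$ because $M_0$ has full rank. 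Combining the three steps, $M=M_0\oplus\bigoplus_{\a\in\Delta}\Z x_\a=L^*+\Z\{x_\a\mid\a\in\Delta\}=\fg_L$, as required. I expect the second step to be the genuine obstacle: the $\mathfrak{sl}_2$-computation only controls the ratios $c_\a/c_{-\a}$, so removing the residual global scaling factor needs the propagation argument together with the normalization of admissible lattices; Steps~1 and~3 are routine once the Cartier--Kostant basis of $\calU_\Z$ and the lattice duality of Figure~\ref{F-lattices} are in hand.
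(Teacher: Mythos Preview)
The paper does not give its own proof of this theorem; it is stated as background without argument. Your overall strategy---split $M$ into its weight components via idempotents in $\calU^0_\Z$, pin down each $M\cap\fg_\a$ using $\mathfrak{sl}_2$-identities in the adjoint action, then recover $M\cap\fh$ by lattice duality---is the standard one (Humphreys \S27; Steinberg, Cor.~2 to Lemma~13), and Steps~1 and~3 are correct as written.

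The gap is in Step~2, exactly where you flag it. You invoke ``the normalization in the notion of admissible lattice (a Chevalley basis vector spanning $M\cap\fg_{\tilde\a}$ over $\Z$ on the highest-root space)'' to kill the residual global scalar, but the paper's definition contains no such normalization: an admissible lattice in $\fg$ is only required to be a $\Z$-form preserved by $\calU_\Z$. Your propagation argument, together with the Lie-ring condition $[M,M]\subseteq M$, does show that all the $c_\a$ agree up to sign with a common $c\in\Z_{\ne0}$, but nothing forces $c=\pm1$. Indeed, for any integer $n\ge2$ the lattice $n\fg_P$---or, already in $\mathfrak{sl}_2$, the lattice $\Z h_\a\oplus 2\Z x_\a\oplus 2\Z x_{-\a}$---is a Lie-ring $\Z$-form stable under $\calU_\Z$, hence admissible in the paper's sense, yet is not equal to $\fg_L$ for any $Q\subseteq L\subseteq P$. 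So the theorem as literally stated is false; it needs an extra hypothesis such as $x_\a\in M$ for some (equivalently every) root $\a$. That hypothesis is automatic in the formulation the paper actually uses later (Steinberg's Cor.~2 to Lemma~13), where one studies the stabiliser $\fg_M=\{x\in\fg\mid x\cdot M\subseteq M\}$ of an admissible lattice $M$ in a faithful module: there $x_\a\in\calU_\Z$ forces $x_\a\in\fg_M$, and with that normalization your Step~2 goes through and the proof is complete.
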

 We have $\fg_\Z=\fg_Q$, called the \emph{adjoint form}. 
We call $\fg_P$ the \emph{simply connected form}.
See Figure~\ref{F-adj-sc}.
\begin{figure}[h]
\begin{center}
\begin{tikzcd}
P^\vee=L^* \arrow[rrdd, dotted,leftrightarrow] &                & P                    &  &  & P^\vee                                           &                                 & P=L                \\
                                &                &                      &  &  &                                                  &                                 &                    \\
Q^\vee \arrow[uu, hook]         &                & Q=L \arrow[uu, hook] &  &  & Q^\vee=L^* \arrow[uu, hook] \arrow[rruu, dotted,leftrightarrow] &                                 & Q \arrow[uu, hook]\\
                                & \text{adjoint} &                      &  &  &                                                  & 
\substack{\text{simply}\\\text{connected}} &                    
\end{tikzcd}
\end{center}
\caption{Adjoint versus simply connected lattices}\label{F-adj-sc}
\end{figure}
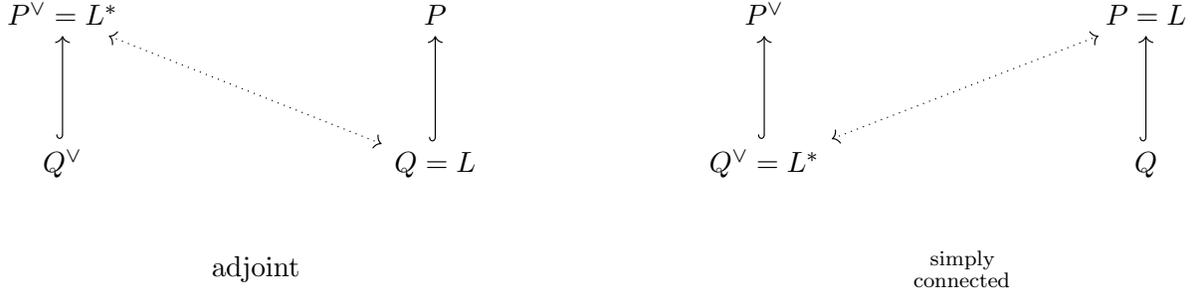
Hence the admissible lattices of $\fg$ are classified by $(L,\Delta,L^*,\Delta^\vee)$. We call this a \emph{root datum}.

For a fixed simple Lie algebra, the forms are classified 
by the subgroups of the \emph{Fundamental group} $P/Q$. By considering the Cartan types case-by-case, we note that
\begin{itemize}
\item For types $E_8,F_4,G_2$, the fundamental group is trivial, so there is only one form, and the adjoint and simply connected cases are isomorphic.
\item For types $A_1, B_\ell,C_\ell, E_7$, the fundamental group has order two, so the only forms are the adjoint and simply connect types.
\item For type $D_\ell$ with $\ell$ even, the fundamental group is a Klein 4 group ($\Z/2\Z\oplus \Z/2\Z$). In all other cases, it is cyclic.
\end{itemize}

%%%%%%%%%%%%%%%%%%%%%%%%%%%%%%%%%%%%%%%%%%
%%%%%%%%%%%%%%%%%%%%%%%%%%%%%%%%%%%%%%%%%%
\section{Modules and their integral forms}\label{S-hwmodule}
We  assume throughout that 
 $V$ is a finite dimensional faithful highest weight module. This implies that $\fg$ is simple.
 
%%%%%%%%%%%%%%%%%%%%%%%%%%%%%%%%%%%%%%%%%%
\subsection{Modules}\label{SS-hwmodule}Let $\mathfrak g$ be a finite--dimensional simple Lie algebra over $\C$ with Chevalley basis $\mathcal{B}=\{x_{\a},h_i\mid \a\in\Delta,\ i\in I \}$.
Suppose that $V$ has underlying  representation $\rho:\fg\rightarrow\End(V)$.
Let $\mu\in \mathfrak h^*$ and set
$$V_\mu=\{v\in V\mid  h\cdot v=\mu(h)v, \text{ for all }h\in \mathfrak{h}\}.$$
We say that $\mu$ is a \emph{weight} of $V$ if $V_\mu\neq 0$. Nonzero elements of $V_\mu$ are called \emph{weight vectors} and $V_\mu$ is called the {\it weight space} corresponding to the weight $\mu$. We write ${\rm wts}(V)$ for the set of all weights of $V$
and we say that $V$ is a \emph{weight module} if
$$V=\bigoplus_{\mu\in\wts(V)} V_\mu.$$
Let $\mu\in\wts(V)$ and let $\a\in\Delta$,  then 
$$\fg_\a \cdot V_\mu\subseteq V_{\mu+\a}.$$
 Given $v\in V_{\mu}$, there exists $v'\in V_{w_{\a}\mu}$ such that $\widetilde{w}_{\a}(t)v=t^{-\langle\mu,\a^{\vee}\rangle } v'$, for all $t\in\Q^{\times}$,  where $\widetilde{w}_{\a}(t)$ as defined by Equation (\ref{rootref}). In particular, $\widetilde{w}_{\a}(t)\cdot V_{\mu}=V_{w_{\a}\mu}$.

\begin{theorem}\label{T-wtomnibus}
Since $V\!$ is finite dimensional,  $V$ is a weight module  and $\wts(V)\subseteq P$.
\end{theorem}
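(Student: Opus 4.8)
The plan is to reduce everything to the representation theory of $\mathfrak{sl}_2(\C)$, using only that $V$ is finite dimensional (the standing hypotheses that $V$ is faithful and of highest weight type are not needed for this statement).

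First I would record the $\mathfrak{sl}_2$-input. For each $i\in I$ the Chevalley relations of Theorem~\ref{Cbasis} show that $\mathfrak{s}_i:=\C x_{\alpha_i}\oplus\C h_i\oplus\C x_{-\alpha_i}$ is a subalgebra of $\fg$ isomorphic to $\mathfrak{sl}_2(\C)$: indeed $[x_{\alpha_i},x_{-\alpha_i}]=h_i$ and $[h_i,x_{\pm\alpha_i}]=\pm\langle\alpha_i,h_i\rangle x_{\pm\alpha_i}=\pm 2\,x_{\pm\alpha_i}$, since $\langle\alpha_i,h_i\rangle=A_{ii}=2$. Restricting $\rho$ to $\mathfrak{s}_i$ makes $V$ a finite dimensional $\mathfrak{sl}_2(\C)$-module, and the classical structure theory of such modules gives that $\rho(h_i)$ is diagonalizable on $V$ with all eigenvalues in $\Z$. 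This is the only substantive ingredient, and it is the step I would expect a reader to want cited or spelled out; everything after it is linear algebra.

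Next I would assemble the weight space decomposition. The coroots $h_1,\dots,h_\ell$ form a $\Z$-basis of $\fh_{\mathbb Z}$, hence a $\C$-basis of $\fh$, so the operators $\rho(h_1),\dots,\rho(h_\ell)$ pairwise commute (because $\fh$ is abelian) and each is diagonalizable by the previous step. Commuting diagonalizable operators are simultaneously diagonalizable, so $V$ is the direct sum of the simultaneous eigenspaces of the $\rho(h_i)$. Because $\{h_i\}$ spans $\fh$, such a simultaneous eigenspace is exactly a weight space $V_\mu$, for the functional $\mu\in\fh^*$ sending $h_i$ to the corresponding eigenvalue; conversely every nonzero $V_\mu$ arises this way. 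Hence $V=\bigoplus_{\mu\in\wts(V)}V_\mu$, i.e. $V$ is a weight module.

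Finally, for $\mu\in\wts(V)$ and any $i\in I$, the scalar $\la\mu,h_i\ra=\mu(h_i)$ is an eigenvalue of $\rho(h_i)$ on $V$, hence lies in $\Z$ by the first step. Since $Q^\vee=\bigoplus_{i\in I}\Z h_i$, this says precisely that $\mu\in P=(Q^\vee)^*$, so $\wts(V)\subseteq P$, completing the argument. The main (indeed only) obstacle is thus the $\mathfrak{sl}_2$ fact that a semisimple element acts diagonalizably with integer eigenvalues on any finite dimensional module; once that is granted, the rest is routine.
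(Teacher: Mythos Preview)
Your argument is correct and self-contained. The paper's own proof is terser and somewhat oddly oriented: it cites the Corollary to Steinberg's Theorem~3 for the integrality statement $\mu(h_\alpha)\in\Z$---which rests on exactly the $\mathfrak{sl}_2$ fact you invoke---and then appeals to Bourbaki to conclude that $V$ is finite dimensional, which is the hypothesis rather than the conclusion of the theorem as stated. The paper does not separately argue that $V$ is a weight module, presumably relying on the standing highest-weight hypothesis, under which the weight-space decomposition is automatic from the PBW theorem. Your approach is thus slightly more general (it uses only finite dimensionality, obtaining the weight decomposition from simultaneous diagonalizability of the commuting diagonalizable operators $\rho(h_i)$) and addresses both claims of the theorem directly, whereas the paper defers to external references. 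The substantive content---integer eigenvalues of $h_i$ on finite-dimensional $\mathfrak{sl}_2$-modules---is the same in both.
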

\begin{proof}
By the Corollary to (\cite{Steinberg}, Theorem 3), if $\mu$ is a weight of $V$ and $\alpha$ is a root, then $\mu(h_\alpha)\in\Z$. This implies that every weight of a highest weight representation is integral and hence $V$ is finite dimensional by \cite{Bourbaki4-6}.    
\end{proof}

We write
\begin{alignat*}{3}
Q^+&= \{\a\in Q \mid \langle\a,\omega_i^\vee\rangle\ge0, \text{ for all $i\in I$}\} \;&=\sum_{i\in I}
\Z_{\ge0} \a_i,\\
P^+&= \{\lambda\in P \mid \langle\lambda,h_i\rangle\ge0, \text{ for all $i\in I$}\} &=\sum_{i\in I}
\Z_{\ge0} \omega_i.
\end{alignat*}

The elements of $P^+$ are called \emph{dominant integral weights}.
Recall that the \emph{dominance order} is the partial order on $P$ defined by $\mu\ge\nu$  if{f} $\mu- \nu\in Q^+$.
If $\wts(V)$ contains an element $\lambda$ such that $\lambda\ge\mu$ for all $\mu\in\wts(V)$, 
then $V$ is called a \emph{highest weight module} with \emph{highest weight} $\lambda$.
\begin{theorem}[\cite{Humphreys}]\label{T-wtirredomnibus}
Every finite dimensional irreducible module of $\fg$ is a highest weight module. 
Two irreducible modules are isomorphic if and only if they have the same highest weight.
There is an irreducible  module $V^\lambda$ with highest weight $\lambda$ for every dominant integral  weight $\lambda$.
\end{theorem}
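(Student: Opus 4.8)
The plan is to prove Theorem~\ref{T-wtirredomnibus} by the classical ``theorem of the highest weight'' argument, in four steps.

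\textbf{Step 1 (finite dimensional irreducible $\Rightarrow$ highest weight module, with dominant integral highest weight).} Let $V$ be finite dimensional and irreducible. By Theorem~\ref{T-wtomnibus}, $V=\bigoplus_{\mu\in\wts(V)}V_\mu$ with $\wts(V)$ a finite nonempty subset of $P$, so we may choose $\lambda\in\wts(V)$ maximal for the dominance order. Since $\fg_\a\cdot V_\lambda\subseteq V_{\lambda+\a}$ and $\lambda+\a\notin\wts(V)$ for every $\a\in\Delta^+$, we get $\np\cdot V_\lambda=0$; fix $0\ne v^+\in V_\lambda$. Then $\calU\cdot v^+$ is a nonzero submodule, hence all of $V$; using the triangular decomposition $\fg=\nm\oplus\fh\oplus\np$ and the PBW theorem, $\calU=\calU(\nm)\,\calU(\fh)\,\calU(\np)$, and since $\calU(\np)v^+=\C v^+=\calU(\fh)v^+$ we obtain $V=\calU(\nm)v^+$. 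As $\calU(\nm)$ is spanned by monomials in the $x_{-\a}$ ($\a\in\Delta^+$), every $\mu\in\wts(V)$ satisfies $\mu\le\lambda$; thus $V$ is a highest weight module with highest weight $\lambda$ and $\dim V_\lambda=1$. To see $\lambda\in P^+$: for each $i\in I$ the Chevalley basis elements $x_{\a_i},h_i,x_{-\a_i}$ span a copy of $\mathfrak{sl}_2$ (note $[h_i,x_{\pm\a_i}]=\pm2x_{\pm\a_i}$ and $[x_{\a_i},x_{-\a_i}]=h_i$), and $v^+$ is an $\mathfrak{sl}_2$-highest weight vector of weight $\langle\lambda,h_i\rangle$ inside the finite dimensional restriction of $V$; by $\mathfrak{sl}_2$-theory this weight lies in $\Z_{\ge0}$, so $\lambda\in P^+$.

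\textbf{Step 2 (uniqueness).} One implication is clear. Conversely, let $V,V'$ be finite dimensional irreducible with highest weight vectors $v^+\in V_\lambda$ and $(v')^+\in V'_\lambda$, and set $M:=\calU\cdot(v^+,(v')^+)\subseteq V\oplus V'$. The coordinate projections restrict to $\fg$-module maps $M\to V$, $M\to V'$, each nonzero, hence surjective by irreducibility of the targets. Exactly as in Step~1, $M=\calU(\nm)(v^+,(v')^+)$ has all weights $\le\lambda$ and $M_\lambda=\C(v^+,(v')^+)$ is one dimensional. Any proper submodule $N\subsetneq M$ is $\fh$-stable, hence a sum of weight spaces, and cannot contain the weight $\lambda$ (else $N\supseteq M_\lambda$, so $N\supseteq\calU M_\lambda=M$); summing, the same is true of the sum $N_{\max}$ of all proper submodules, so $N_{\max}\ne M$. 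Thus $M/N_{\max}$ is the unique irreducible quotient of $M$, and the kernels of $M\to V$ and $M\to V'$ are maximal, hence both equal $N_{\max}$; therefore $V\cong M/N_{\max}\cong V'$.

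\textbf{Step 3 (existence).} For $\lambda\in P^+$ let $\C_\lambda$ be the one dimensional module over the Borel $\fh\oplus\np$ on which $\fh$ acts by $\lambda$ and $\np$ by $0$, and form the Verma module $M(\lambda):=\calU\otimes_{\calU(\fh\oplus\np)}\C_\lambda$. By PBW, $M(\lambda)\cong\calU(\nm)$ as an $\nm$-module; in particular $M(\lambda)$ is a highest weight module of highest weight $\lambda$ with finite dimensional weight spaces, so, exactly as in Step~2, it has a unique irreducible quotient $V^\lambda$, of highest weight $\lambda$. It remains to show $V^\lambda$ is finite dimensional. Writing $m_i:=\langle\lambda,h_i\rangle\in\Z_{\ge0}$ and $v_\lambda$ for the image of the canonical generator, a computation inside $\calU$ using the $\mathfrak{sl}_2$-relations and $\np v_\lambda=0$ shows $\np\cdot(x_{-\a_i}^{\,m_i+1}v_\lambda)=0$; since this vector has weight $\lambda-(m_i+1)\a_i\ne\lambda$, it vanishes in $V^\lambda$. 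Hence each $x_{-\a_i}$ acts locally nilpotently on $V^\lambda$, so $V^\lambda$ is a direct sum of finite dimensional modules over the $\mathfrak{sl}_2$ spanned by $x_{\pm\a_i},h_i$, for every $i$. Consequently $\wts(V^\lambda)$ is stable under every simple reflection $w_i$, hence under all of $W$; by Lemma~\ref{weroot} and this $W$-invariance, every weight of $V^\lambda$ is $W$-conjugate into the finite set $\{\mu\in P^+\mid\mu\le\lambda\}$, so $\wts(V^\lambda)$ is finite, and its weight spaces, being quotients of those of $M(\lambda)$, are finite dimensional. Therefore $V^\lambda$ is finite dimensional.

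\textbf{Main obstacle.} Everything in Steps~1 and~2 is formal once Theorem~\ref{T-wtomnibus} and the PBW theorem are in hand. The genuine work is concentrated in Step~3: verifying the identity $\np\cdot(x_{-\a_i}^{\,m_i+1}v_\lambda)=0$ in $\calU$ (i.e.\ that $x_{-\a_i}^{\,m_i+1}v_\lambda$ is a primitive vector), and then bootstrapping local $\mathfrak{sl}_2$-finiteness to $W$-invariance and finiteness of $\wts(V^\lambda)$.
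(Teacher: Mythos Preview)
The paper does not prove this theorem at all; it simply imports it from \cite{Humphreys}. Your argument is exactly the classical ``theorem of the highest weight'' proof one finds there, so there is no meaningful divergence of approach to discuss.

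Two small points to tighten. First, your appeal to Lemma~\ref{weroot} in Step~3 is misplaced: that lemma says every \emph{root} is $W$-conjugate to a simple root, not that every \emph{weight} is $W$-conjugate to a dominant one. What you actually need is the standard fact that every $W$-orbit in $P$ meets $P^+$, together with the finiteness of $\{\mu\in P^+\mid \mu\le\lambda\}$; both are in \cite{Humphreys} but neither is Lemma~\ref{weroot}. Second, the implication ``$x_{-\a_i}^{\,m_i+1}v_\lambda=0$, hence $x_{-\a_i}$ acts locally nilpotently on all of $V^\lambda$'' is not immediate: you only know nilpotence on the generator. The missing line is that the set of $\mathfrak{sl}_2$-locally-finite vectors (equivalently, the vectors on which both $x_{\a_i}$ and $x_{-\a_i}$ act nilpotently) is a $\fg$-submodule of $V^\lambda$ containing $v_\lambda$, hence all of $V^\lambda$. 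With these two clarifications the proof is complete.
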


By (\cite{Steinberg}, Theorem 3), the weight space $(V^\lambda)_\lambda \subseteq V^\lambda$ for the highest weight has dimension one. If we fix a nonzero highest weight vector vector $v_{\lambda}$ such
that $v_{\lambda}$ is a weight vector corresponding to some weight $\lambda$ and such that for any positive root $\alpha$, we have  $x_\alpha\cdot v_{\lambda} = 0$, 
then
$$\fn^+\cdot v_\lambda =0,\qquad \fh\cdot v_\lambda = \C v_\lambda,\qquad \fn^-\cdot v_\lambda=V^\lambda,$$
and so
$$\calU^+\cdot v_\lambda =0,\qquad \calU^0\cdot v_\lambda=\C v_\lambda,\qquad\calU^-\cdot v_\lambda=V^\lambda.$$

 Let $\mu\in\wts(V)$. Since $\mu\le \lambda$, we have $\lambda-\mu\in Q^{+}$ and hence $$\lambda-\mu=\sum_{i=1}^{\ell}n_{i}\alpha_{i}$$ with $n_{i}\in \Z_{\ge 0}$ for all $i=1,2,\dots, n.$ The {\it depth} of  $\mu$  is  $$\dep(\mu)= \sum_{i=1}^{\ell}n_{i}=\htt(\lambda)-\htt(\mu).$$

The lattice generated by the weights of $V$ is
$$L_V:= \Z\wts(V)\subseteq P.$$
\begin{lemma}[\cite{Steinberg}, Lemma 27]
Since $V$ is a faithful module, 
 $Q\subseteq L_V$.
\end{lemma}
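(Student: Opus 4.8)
The plan is to show the inclusion $Q \subseteq L_V$ by exploiting the faithfulness of $\rho$ together with the fact that $\Z\wts(V)$ is automatically $W$-stable. First I would observe that, since $L_V = \Z\wts(V)$ and the Weyl group permutes $\wts(V)$ (using the automorphisms $\widetilde w_\a(t)$ and the identity $\widetilde w_\a(t)\cdot V_\mu = V_{w_\a\mu}$ recorded just before Theorem~\ref{T-wtirredomnibus}, or more elementarily the conjugation action of the $\widetilde w_\a$ on $\fh$), the lattice $L_V$ satisfies $W L_V = L_V$. Thus for any weight $\mu$ of $V$ and any simple root $\a_i$, the element $\mu - w_i\mu = \langle\mu, h_i\rangle\,\a_i$ lies in $L_V$.

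The next step is to produce, for each $i \in I$, a weight $\mu \in \wts(V)$ with $\langle\mu, h_i\rangle \ne 0$; then $\langle\mu,h_i\rangle\,\a_i \in L_V$, and since $L_V \subseteq P$ is a lattice containing a nonzero multiple of $\a_i$ it follows that some positive multiple $n_i\a_i$ lies in $L_V$. To get $\a_i$ itself rather than just $n_i\a_i$ I would argue a little more carefully: the set $\{\langle\mu,h_i\rangle : \mu\in\wts(V)\}$ is a subset of $\Z$ that is symmetric under $\mu\mapsto w_i\mu$ (which negates this value), contains a nonzero element, and contains $0$ unless $V$ is small; the key point is that for $\fg$ simple and $V$ faithful the $\fg$-submodule structure forces these integers to have gcd $1$. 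Concretely, faithfulness means $\rho(h_i)\ne 0$ for every $i$ (if $\rho(h_i)=0$ then, because $h_i$ generates a nonzero ideal-free subspace and $\fg$ is simple, one derives $\rho\equiv 0$ on a nonzero ideal, contradicting faithfulness and simplicity), so $\langle\mu,h_i\rangle\ne 0$ for at least one weight $\mu$; and then one invokes that $\wts(V)$ is closed under the $\mathfrak{sl}_2$-string operators $s_i$, which forces the integers $\langle\mu, h_i\rangle$ as $\mu$ ranges over a single $\a_i$-string to run through a full interval $[-m, m]\cap\Z$, hence to include $\pm 1$ whenever they include any nonzero value. Therefore $\a_i \in L_V$ for every $i$, and since the $\a_i$ generate $Q$ we conclude $Q \subseteq L_V$.

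The main obstacle I anticipate is the passage from "$n_i\a_i \in L_V$ for some $n_i \ne 0$" to "$\a_i \in L_V$"; getting the exact generator rather than a sublattice of finite index requires the $\mathfrak{sl}_2$-string argument (equivalently, the statement that for a faithful representation the weights pair with $h_i$ to give a set of integers with gcd one), and this is where faithfulness of $V$ is genuinely used — a non-faithful module such as the adjoint representation when $Q \subsetneq P$ would only give $Q \subseteq L_V$ with the containment possibly failing to be an equality, but never $L_V \subsetneq Q$. An alternative, cleaner route avoiding the string combinatorics: since $V$ is faithful, the composite $\fg \hookrightarrow \End(V)$ lets us realize each $\a_i$ as a difference of weights $\mu + \a_i - \mu$ for a suitable weight $\mu$ with $\rho(x_{\a_i})\cdot v_\mu \ne 0$ (such $\mu$ exists because $\rho(x_{\a_i})\ne 0$ by faithfulness), and then $\a_i = (\mu+\a_i) - \mu \in \Z\wts(V) = L_V$ directly. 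I would actually lead with this second argument as the main line, since it is shortest and uses only faithfulness plus $\fg_{\a_i}\cdot V_\mu \subseteq V_{\mu+\a_i}$, and relegate the string/Weyl-invariance discussion to a remark; the key input in either case is simply that faithfulness forbids $\rho(x_{\a_i}) = 0$.
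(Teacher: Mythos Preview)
Your ``alternative, cleaner route'' at the end is exactly the paper's proof: faithfulness forces $\rho(x_{\a})\ne 0$, so some weight vector $v\in V_\mu$ has $x_\a\cdot v\ne 0$, whence $\mu+\a\in\wts(V)$ and $\a=(\mu+\a)-\mu\in L_V$. Since you say you would lead with this argument, the proposal is correct and matches the paper.

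One correction to your longer first approach, in case you intend to keep it as a remark: the claim that the values $\langle\mu,h_i\rangle$ along a single $\a_i$-string fill the full interval $[-m,m]\cap\Z$ is false. Along an irreducible $\mathfrak{sl}_2$-string the $h_i$-eigenvalues are $m, m-2, \dots, -m+2, -m$, so if $m$ is even you never see $\pm 1$ on that string, and the passage from $n_i\a_i\in L_V$ to $\a_i\in L_V$ via strings does not go through as stated. (For instance, in the adjoint representation of $\mathfrak{sl}_2$ every weight pairs with $h_\a$ to give an even integer.) The direct argument using $x_{\a_i}$ avoids this issue entirely, which is another reason to prefer it.
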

\begin{proof}
Since $V$ is faithful, no element of $\frak g$ acts trivially on $V$. Let $\a$ be a root. Then the root vector $x_{\a}$ is nonzero and $\a\in Q$. Thus for a weight $\mu$ of $V$ there exists $0\neq v\in V_{\mu}$ such that
$0\neq x_{\a}v\in V_{\mu+\a}$. Hence $\mu+\a$ is also a weight of $V$ and so
$$\a=(\mu+\a)-\mu \in L_V.$$
Thus $Q\leq L_V$. 
\end{proof}

For $\alpha\in \Delta$, let $h_{\a}$ be the coroot corresponding to $\alpha$.  For $n\ge 0$, also recall the element
$$x^{(n)}_{\a} := \frac{{x^{n}_\a}}{n!}$$
of $\mathcal{U}_{\Z}$
as introduced in Subsection~\ref{CSform}.
We end this subsection with the following lemma.
\begin{lemma}\label{L-xx}
Let $\mu\in \wts(V)$be such that $\mu+\alpha$ is not a weight of $V$. 
Further assume that $v_{\mu}\in V_{\mu}$ and $n:= \langle\mu,h_{\a}\rangle>0$.
Then
$$x^{(n)}_{\alpha} x^{(n)}_{-\alpha} \cdot v_{\mu} = v_\mu.$$
\end{lemma}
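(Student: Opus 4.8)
The plan is to reduce the statement to a computation inside the rank-one subalgebra $\mathfrak{sl}_2$ generated by $x_\alpha, x_{-\alpha}, h_\alpha$, exactly as in the classical theory of $\mathfrak{sl}_2$-modules. Since $\mu+\alpha$ is not a weight, we have $x_\alpha \cdot v_\mu = 0$, so $v_\mu$ generates (under the action of $x_{-\alpha}$) a string of weight vectors $v_\mu, x_{-\alpha}v_\mu, x_{-\alpha}^{(2)}v_\mu, \dots$ lying in the weight spaces $V_\mu, V_{\mu-\alpha}, V_{\mu-2\alpha},\dots$. The key input is the standard $\mathfrak{sl}_2$ commutation identity
\begin{equation*}
x_\alpha^{(k)} x_{-\alpha}^{(k)} = \sum_{j=0}^{k} x_{-\alpha}^{(k-j)} \binom{h_\alpha - k + \text{(something)}}{j} x_\alpha^{(k-j)}
\end{equation*}
or, more usefully, the cleaner fact that for a highest-weight-type vector $v$ with $x_\alpha v = 0$ and $h_\alpha v = n v$, one has $x_\alpha^{(n)} x_{-\alpha}^{(n)} v = v$. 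This is the divided-power version of the identity $e^n f^n v = (n!)^2 v$ in the $(n+1)$-dimensional irreducible $\mathfrak{sl}_2$-module, which holds because $v$ generates an $(n+1)$-dimensional (or at least weight-$n$-topped) $\mathfrak{sl}_2$-submodule.

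First I would restrict attention to $W := \mathcal{U}(\mathfrak{sl}_2^{(\alpha)}) \cdot v_\mu$, the cyclic $\mathfrak{sl}_2^{(\alpha)}$-submodule generated by $v_\mu$, where $\mathfrak{sl}_2^{(\alpha)} = \C x_\alpha \oplus \C h_\alpha \oplus \C x_{-\alpha}$. Because $x_\alpha v_\mu = 0$ (as $\mu + \alpha \notin \wts(V)$) and $h_\alpha v_\mu = \langle \mu, h_\alpha\rangle v_\mu = n v_\mu$ with $n > 0$, the vector $v_\mu$ is a highest weight vector of weight $n$ for $\mathfrak{sl}_2^{(\alpha)}$; hence $W$ contains a copy of the irreducible $\mathfrak{sl}_2$-module of highest weight $n$, and since $V$ is finite dimensional and completely reducible as an $\mathfrak{sl}_2^{(\alpha)}$-module, $v_\mu$ in fact lies in such an irreducible summand. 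Within that $(n+1)$-dimensional irreducible, I would invoke (or rederive in two lines by induction on $k$) the formula $x_{-\alpha}^{(k)} v_\mu = \binom{n}{k}$-scaled basis vectors and the dual formula $x_\alpha^{(k)}(x_{-\alpha}^{(n)}v_\mu)$, culminating in $x_\alpha^{(n)} x_{-\alpha}^{(n)} v_\mu = v_\mu$.

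The concrete computation I would carry out: by the $\mathfrak{sl}_2$ representation theory, $x_{-\alpha}^{(n)} \cdot v_\mu =: w$ is a lowest weight vector (weight $-n$), and then $x_\alpha^{(n)} \cdot w = v_\mu$; the relevant scalars are $x_\alpha \cdot (x_{-\alpha}^{(k)} v_\mu) = (n - k + 1) x_{-\alpha}^{(k-1)} v_\mu$, so iterating $x_\alpha^{(n)} = \frac{1}{n!} x_\alpha^n$ against $x_{-\alpha}^{(n)} v_\mu$ picks up $\frac{1}{n!}\, n! = 1$. I expect the main obstacle to be purely bookkeeping: carefully tracking the binomial coefficients and the divided-power normalizations so that the final scalar is exactly $1$ rather than some binomial expression, and making sure the hypothesis $n > 0$ (needed so that $v_\mu$ is not itself a lowest weight vector, i.e.\ the string genuinely has length $n \ge 1$) is used correctly. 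Everything else is standard $\mathfrak{sl}_2$-theory applied to the subalgebra $\mathfrak{sl}_2^{(\alpha)}$, together with the fact (Theorem~\ref{T-wtomnibus}) that $V$ is a finite-dimensional weight module, hence semisimple over each such $\mathfrak{sl}_2$.
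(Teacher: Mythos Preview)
Your proposal is correct and follows essentially the same route as the paper: both use that $x_\alpha v_\mu=0$ (since $\mu+\alpha\notin\wts(V)$) and $h_\alpha v_\mu=nv_\mu$, then carry out the standard $\mathfrak{sl}_2$ computation showing $x_\alpha^n x_{-\alpha}^n v_\mu=(n!)^2 v_\mu$; the paper does this via the commutator identity $[x_\alpha,x_{-\alpha}^m]=m\,x_{-\alpha}^{m-1}(h_\alpha-(m-1))$ and an induction on $m$, which is exactly the ``two lines by induction'' you describe. Your appeal to complete reducibility to place $v_\mu$ inside an irreducible $\mathfrak{sl}_2^{(\alpha)}$-summand is harmless but unnecessary---the direct inductive computation works for any vector annihilated by $x_\alpha$ with $h_\alpha$-eigenvalue $n$, without knowing anything about the ambient module structure.
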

\begin{proof}
First, we prove by induction on $m\in\N$ that
$$[x_{\alpha},{x^{m}_{-\alpha}}]=m{x^{m-1}_{-\alpha}}\left(h_{\a}-(m-1)\right).$$
 By assuming the inductive hypothesis for 
 $k-1\in\N$, we take
\begin{align*}
[x^{k}_\a,{x_{-\a}}] &= x_\a {x^{k}_{-\a}} -{x^{k}_{-\a}} x_\a \\
&=x^k_\a{x_{-\a}} - {x^{k-1}_{-\a}}x_\a{x_{-\a}} +{x^{k-1}_{-\a}}x_\a{x_{-\a}}   -{x^k_{-\a}} x_\a\\
&= [x_\a,{x^{k-1}_{-\a}}] {x_{-\a}} + {x^{k-1}_{-\a}} [x_\a,{x_{-\a}}]\\
&=  (k-1) {x^{k-2}_{-\alpha}}\left(h_{\a}-k+2 \right){x_{-\a}} 
+ {x^{k-1} _{-\alpha}}h_{\a}\\
&= (k-1) {x^{k-2}_{-\alpha}}(h_{\a}x_{-\a}) -(k-1)(k-2){x^{k-1}_{-\alpha}} + {x^{k-1}_{-\alpha}} h_{\a}\\
&= (k-1) {x^{k-2}_{-\alpha}}(x_{-\a}h_{\a}-2x_{-\a}) -(k-1)(k-2){x^{k-1}_{-\alpha}} + {x^{k-1}_{-\alpha}} h_{\a}\\
&= k {x^{k-1}_{-\alpha}}h_{\a} -k(k-1){x^{k-1}_{-\alpha}}\\
&= kx^{k-1}_{-\alpha}(h_{\a} -(k-1)).
\end{align*}

Secondly, we note that $x_\a\cdot v_\mu\in V_{\mu+\a}=0$, so $x_\a\cdot v_\mu=0$.
Also $h_{\a} \cdot v_\mu=\langle\mu,h_{\a}\rangle v_\mu=nv_\mu$.
Next we prove
\begin{eqnarray*}
{x^{m}_{\alpha}}{x^{m}_{-\alpha}}\cdot v_{\mu}=m!\prod_{j=0}^{m-1}(n-j)\cdot v_{\mu}
\end{eqnarray*}
by induction on $m$:
\begin{align*}
{x^{m}_{\alpha}}{x^{m}_{-\alpha}}\cdot  v_{\mu}
&={x^{m-1}_\a}\left([x_{\alpha}, {x^{m}_{-\alpha}}]+{x^{m}_{-\alpha}}x_{\alpha}\right)\cdot  v_{\mu}\\
&= {x^{m-1}_\a}[x_{\alpha}, {x^{m}_{-\alpha}}]\cdot  v_{\mu}\\
&= {x^{m-1}_{\alpha}}{x^{m-1}_{-\alpha}} m\left(h_{\a}-(m-1)\right)\cdot  v_{\mu}\\
&= m\left(n-(m-1)\right) {x^{m-1}_{\alpha}}{x^{m-1}_{-\alpha}}\cdot  v_{\mu}\\
&= m!\prod_{j=0}^{m-1}(n-j)\cdot v_{\mu}.
\end{align*}
Now take $m=n$ to get
${x^{n}_{\alpha}}{x^{n}_{-\alpha}}\cdot  v_{\mu} = n!\,n!\,v_\mu$, so $x^{(n)}_{\alpha} x^{(n)}_{-\alpha} \cdot v_{\mu} = v_\mu$.
\end{proof}

%%%%%%%%%%%%%%%%%%%%%%%%%%%%%%%%%%%%%%%%%%
\subsection{Admissible lattices in modules}\label{S-hwmodule}
An \emph{admissible lattice} $M$ in $V$ is a $\Z$-form of $V$ such that $\calU_\Z\cdot M\subseteq M$.
We consider the case where $\fg$ is simple and $V=V^\lambda$ is a nontrivial irreducible module. It follows that $V$ is faithful and so $Q\subseteq L_V$. Since $\mu-\lambda\in Q$ for all $\mu\in\wts(V)$, we have that $L_V$ is the lattice generated by $\lambda$ and $Q$.

\begin{theorem}[\cite{Chevalley}]
Since $\fg$ is simple and $V=V^\lambda$
is a nontrivial irreducible module
$$V_\Z := \calU_\Z\cdot v_\lambda$$
is an admissible lattice for $V$.
%Every admissible lattice of $V$ has this form for some choice of $v_\lambda$. Thus every admissible lattice in $V$ is a scalar multiple of $V_\Z$.

\end{theorem}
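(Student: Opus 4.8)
The plan is to verify the two conditions defining an admissible lattice in $V$: that $V_\Z$ is stable under $\calU_\Z$, and that $V_\Z$ is a $\Z$-form of $V$. Stability is immediate from the definition $V_\Z=\calU_\Z\cdot v_\lambda$, since $\calU_\Z$ is a subring of $\calU$: $\calU_\Z\cdot V_\Z=\calU_\Z\calU_\Z\cdot v_\lambda=\calU_\Z\cdot v_\lambda=V_\Z$. So the whole content is to show that $V_\Z$ is a finitely generated free $\Z$-module for which the canonical map $V_\Z\otimes_\Z\C\to V$ is an isomorphism. The first step I would carry out is a simplification of the description of $V_\Z$, using the factorisation $\calU_\Z=\calU_\Z^-\calU_\Z^0\calU_\Z^+$ and the $\Z$-bases $B^\pm,B^0$ from Kostant's Proposition: every basis monomial $\prod_{\a\in\Delta^+}x_\a^{(m_\a)}$ of $B^+$ other than $1$ annihilates $v_\lambda$ (because $x_\a v_\lambda=0$ for $\a\in\Delta^+$), so $\calU_\Z^+\cdot v_\lambda=\Z v_\lambda$; and $\binom{h_i}{m}$ acts on the weight vector $v_\lambda$ by the integer $\binom{\langle\lambda,h_i\rangle}{m}$ (using $\langle\lambda,h_i\rangle\in\Z_{\ge0}$), so $\calU_\Z^0\cdot v_\lambda=\Z v_\lambda$. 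Hence $V_\Z=\calU_\Z^-\cdot v_\lambda$, spanned over $\Z$ by $\{b\cdot v_\lambda\mid b\in B^-\}$.

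Next I would check that this spanning set is essentially finite. A monomial $b=\prod_{\a\in\Delta^+}x_{-\a}^{(m_\a)}$ sends $v_\lambda$ into the weight space $V_{\lambda-\sum_\a m_\a\a}$, and for any fixed weight $\mu$ the equation $\sum_{\a\in\Delta^+}m_\a\a=\lambda-\mu$ has only finitely many solutions in nonnegative integers (since $\sum_\a m_\a\le\htt(\lambda-\mu)$). As $V$ has only finitely many weights (Theorem~\ref{T-wtomnibus}), only finitely many of the $b\cdot v_\lambda$ are nonzero, so $V_\Z$ is finitely generated over $\Z$; it is torsion-free, being a subgroup of the $\C$-vector space $V$, hence free of finite rank. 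Moreover $\C\cdot V_\Z=(\C\calU_\Z^-)\cdot v_\lambda=\calU^-\cdot v_\lambda=V$ by irreducibility, so $V_\Z$ spans $V$ and $\operatorname{rank}_\Z V_\Z\ge\dim_\C V$.

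The remaining inequality, $\operatorname{rank}_\Z V_\Z\le\dim_\C V$, is the crux; equivalently, the surjection $V_\Z\otimes_\Z\C\to V$ must be shown injective. Since, up to rescaling each element by a product of factorials, $B^-$ is the PBW basis of $\calU^-$, the ring $\calU_\Z^-$ is a $\Z$-form of $\calU^-$. Write $I^-=\{u\in\calU^-\mid u\cdot v_\lambda=0\}$, so that $\calU^-/I^-\cong V$; tensoring the exact sequence $0\to\calU_\Z^-\cap I^-\to\calU_\Z^-\to V_\Z\to0$ with the flat $\Z$-module $\C$ identifies $V_\Z\otimes_\Z\C$ with $\calU^-/\,\C(\calU_\Z^-\cap I^-)$, where $\C(\calU_\Z^-\cap I^-)$ denotes the $\C$-span. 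So it suffices to prove $\C(\calU_\Z^-\cap I^-)=I^-$. For this I would invoke the standard structure of the irreducible module: applying finite-dimensional $\mathfrak{sl}_2$-theory to each simple root subalgebra gives $x_{-\a_i}^{\langle\lambda,h_i\rangle+1}\cdot v_\lambda=0$ for all $i\in I$, and $V^\lambda$ is the quotient of the Verma module $M(\lambda)$ by the submodule generated by the corresponding singular vectors, so that $I^-=\sum_{i\in I}\calU^-\,x_{-\a_i}^{\langle\lambda,h_i\rangle+1}$. Since $x_{-\a_i}^{\langle\lambda,h_i\rangle+1}=(\langle\lambda,h_i\rangle+1)!\;x_{-\a_i}^{(\langle\lambda,h_i\rangle+1)}$ lies in $\calU_\Z^-$, the left ideal $\sum_{i}\calU_\Z^-\,x_{-\a_i}^{\langle\lambda,h_i\rangle+1}$ is contained in $\calU_\Z^-\cap I^-$ and has $\C$-span equal to $I^-$; hence $\C(\calU_\Z^-\cap I^-)=I^-$, the canonical map $V_\Z\otimes_\Z\C\to\calU^-/I^-=V$ is an isomorphism, and together with $\calU_\Z$-stability this proves $V_\Z$ is an admissible lattice. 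I expect this last identification to be the main obstacle: the whole argument hinges on realising the defining relations of $V^\lambda$ inside $\calU_\Z$ via divided powers, i.e.\ on their integrality. An alternative packaging of the same point, which keeps everything inside $V$, is to introduce the contravariant form on $V^\lambda$ normalised by $(v_\lambda,v_\lambda)=1$, observe that it is $\Z$-valued on $V_\Z\times V_\Z$ because $\calU_\Z$ is stable under the Chevalley anti-involution (Theorem~\ref{Cbasis}), and then deduce $\operatorname{rank}_\Z V_\Z=\dim_\C V$ from the inclusion $V_\Z\subseteq V_\Z^{\ast}$ together with $\operatorname{rank}_\Z V_\Z^{\ast}\le\dim_\C V$.
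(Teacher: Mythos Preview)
Your argument is correct. The paper itself does not supply a proof of this statement; it is stated with a citation to Chevalley, so there is no in-paper proof to compare against. What you have written is essentially the standard argument one finds in Steinberg's lecture notes or in Humphreys \S27: reduce to $V_\Z=\calU_\Z^-\cdot v_\lambda$ via the triangular decomposition and the integrality of $\binom{\langle\lambda,h_i\rangle}{m}$, observe finite generation and spanning, and then show the rank is exactly $\dim_\C V$.

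Your first route to the rank bound, via the identification $I^-=\sum_i\calU^-\,x_{-\a_i}^{\langle\lambda,h_i\rangle+1}$ and the observation that these generators already lie in $\calU_\Z^-$, is clean and correct; it leans on the BGG-type presentation of $V^\lambda$ as a quotient of the Verma module, which is outside the paper's internal development but is entirely standard. The alternative via the contravariant form is also fine and is perhaps closer in spirit to how the surrounding results (e.g.\ Lemma~\ref{lattwsd}) are used later, since it produces the weight-by-weight lattice structure directly. Either way, the only genuine content is exactly where you flagged it: producing integral witnesses for the defining relations of $V^\lambda$, which the divided-power generators of $\calU_\Z$ supply.
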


\begin{proposition}[\cite{Humphreys}]
If $M$ is an admissible lattice of $V$, then
$V_\Z\subseteq M \subseteq ((V^*)_\Z)^*.$
\end{proposition}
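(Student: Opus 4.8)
The plan is to prove the two inclusions separately, after fixing the normalizations that are implicit in the statement. First, since every admissible lattice is the direct sum of its intersections with the weight spaces of $V$ (a standard consequence of $\calU_\Z^0$-stability, because integer-valued polynomials in the $h_i$ can be made to take prescribed integer values on the finitely many weights of $V$), the lattice $M\cap V_\lambda$ is a rank-one $\Z$-lattice in the one-dimensional space $V_\lambda$; I assume $v_\lambda$ has been chosen to generate it, so $M\cap V_\lambda=\Z v_\lambda$. Second, I take the highest weight vector of $V^*$ used to form $(V^*)_\Z$ to be the functional $\phi_0\in V^*$ sending a chosen generator $v^-$ of $(V_\Z)_{w_0\lambda}$ to $1$ and vanishing on $V_\mu$ for every $\mu\neq w_0\lambda$, where $w_0$ is the longest element of $W$, so that $w_0\lambda$ is the lowest weight of $V$; the relation $\fg_\a\cdot V_\mu\subseteq V_{\mu+\a}$ shows at once that $\np\cdot\phi_0=0$, so $\phi_0$ is a highest weight vector of the irreducible module $V^*$, of highest weight $-w_0\lambda$, and $(V^*)_\Z=\calU_\Z\cdot\phi_0$ is an admissible lattice in $V^*$. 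With these normalizations the inclusion $V_\Z\subseteq M$ is immediate, since $v_\lambda\in M$ and $M$ is $\calU_\Z$-stable give $V_\Z=\calU_\Z\cdot v_\lambda\subseteq\calU_\Z\cdot M\subseteq M$.

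For the inclusion $M\subseteq((V^*)_\Z)^*$, I must show $\langle\varphi,v\rangle\in\Z$ for all $\varphi\in(V^*)_\Z$ and $v\in M$, where $\langle\circ,\circ\rangle\colon V^*\times V\to\C$ is the natural pairing. By $\Z$-linearity it is enough to treat $\varphi=u\cdot\phi_0$ with $u\in\calU_\Z$. The contragredient action satisfies $\langle u\cdot\phi,v\rangle=\langle\phi,\sigma(u)\cdot v\rangle$, where $\sigma$ is the principal anti-automorphism of $\calU$ extending $x\mapsto-x$ on $\fg$; since $\sigma\big(x_\a^{(m)}\big)=(-1)^m x_\a^{(m)}$ and $\sigma\binom{h_i}{m}=\binom{-h_i}{m}\in\calU_\Z^0$, the anti-automorphism $\sigma$ preserves $\calU_\Z$, and therefore $\sigma(u)\cdot v\in\calU_\Z\cdot M\subseteq M$. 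Hence it suffices to prove $\langle\phi_0,w\rangle\in\Z$ for every $w\in M$. As $\phi_0$ is supported on the single weight space $V_{w_0\lambda}$, we have $\langle\phi_0,w\rangle=\langle\phi_0,w_{w_0\lambda}\rangle$ for $w_{w_0\lambda}$ the $V_{w_0\lambda}$-component of $w$, and by the weight decomposition of $M$ above, $w_{w_0\lambda}\in M\cap V_{w_0\lambda}$.

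It remains to identify $M\cap V_{w_0\lambda}$, and here is the key point. For any root $\a$ the automorphism $\chi_\a(1)$ acts on $V$ as the finite sum $\sum_{m\ge0}\rho\big(x_\a^{(m)}\big)$ of operators each of which preserves every $\calU_\Z$-stable lattice, so every admissible lattice is stable under all $\chi_\a(\pm1)$, hence under the Weyl group representative $\widetilde w_0$, which is a product of the automorphisms $\chi_\b(\pm1)$ for $\b\in\Delta$. Since $\widetilde w_0$ carries $V_\mu$ onto $V_{w_0\mu}$, we obtain $M\cap V_{w_0\lambda}=\widetilde w_0\cdot(M\cap V_\lambda)=\Z\,\widetilde w_0v_\lambda$, and the same computation with $V_\Z$ in place of $M$ shows that $\widetilde w_0v_\lambda$ generates $(V_\Z)_{w_0\lambda}=\Z v^-$. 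Thus $M\cap V_{w_0\lambda}=\Z v^-$ and $\langle\phi_0,w_{w_0\lambda}\rangle\in\langle\phi_0,\Z v^-\rangle=\Z$, which finishes the proof.

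The step I expect to need the most care is the identification of $M\cap V_{w_0\lambda}$ exactly, rather than merely up to finite index: the whole argument fails if this lattice is strictly larger than $\Z v^-$. The clean route is the observation that $\chi_\a(1)$ is realized inside $\calU_\Z$, so that the lifted Weyl group acts on the set of admissible lattices and permutes their extremal weight spaces; trying instead to pass from $v_\lambda$ to $v^-$ through an explicit chain of lowering operators would lead into Shapovalov-form bookkeeping. A secondary point worth making explicit rather than suppressing is the normalization of $\phi_0$ relative to $v_\lambda$ and $M$ — the asserted chain of inclusions is not invariant under rescaling $M$, and $\phi_0$ must be taken dual to the lowest weight lattice of $V_\Z$ for $((V^*)_\Z)^*$ to be the intended object.
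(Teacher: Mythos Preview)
The paper does not supply a proof of this proposition; it merely cites Humphreys and moves on. Your argument is correct and is essentially the proof found in the cited reference (Humphreys, \emph{Introduction to Lie Algebras and Representation Theory}, \S27.3): the first inclusion is immediate from $\calU_\Z$-stability once $v_\lambda\in M$, and the second reduces via the antipode to identifying the lowest-weight piece $M\cap V_{w_0\lambda}$, which you do by observing that $\widetilde w_0$ is a product of operators $\chi_\a(\pm1)=\sum_m(\pm1)^m x_\a^{(m)}$ lying in (the image of) $\calU_\Z$ and hence preserves every admissible lattice. Your explicit discussion of the normalizations is a genuine improvement in clarity over both the paper and the source: the inclusions are not scale-invariant, and the convention $M\cap V_\lambda=\Z v_\lambda$ together with the choice of $\phi_0$ dual to a generator of $(V_\Z)_{w_0\lambda}$ is exactly what makes the statement true as written.
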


\begin{lemma}[\cite{Steinberg}, Lemma 12]\label{lattwsd}  Let $v_{\lambda}$ be a highest weight vector. Then 
$\mathcal{U}_\Z^0 \cdot v_{\lambda}=\Z v_{\lambda}$.
For $\mu\in\wts(V)$, $V_{\mu,\Z}:= V_\Z\cap V_\mu$ is a $\Z$-form of $V_\mu$. Hence
\begin{eqnarray*}
 V_\Z =\bigoplus_{\mu\in\wts(V)} V_{\mu,\Z}.
\end{eqnarray*}
\end{lemma}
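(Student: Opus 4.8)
The plan is to prove the three assertions in sequence, each reducing to a statement about the Cartier--Kostant form $\calU_\Z$ and its triangular decomposition $\calU_\Z^-\otimes\calU_\Z^0\otimes\calU_\Z^+\xrightarrow{\sim}\calU_\Z$. First I would establish $\calU_\Z^0\cdot v_\lambda=\Z v_\lambda$. Since $\fn^+\cdot v_\lambda=0$, in $\calU$ we have $x_\a\cdot v_\lambda=0$ for every positive root $\a$, and hence $x_\a^{(m)}\cdot v_\lambda=0$ for all $m\ge 1$; so $\calU_\Z^+\cdot v_\lambda=\Z v_\lambda$. For $\calU_\Z^0$, the generators are the binomials $\binom{h_i}{m}$, and since $h_i\cdot v_\lambda=\langle\lambda,h_i\rangle v_\lambda$ with $\langle\lambda,h_i\rangle\in\Z_{\ge0}$ (because $\lambda\in P^+$ by Theorem~\ref{T-wtirredomnibus}), we get $\binom{h_i}{m}\cdot v_\lambda=\binom{\langle\lambda,h_i\rangle}{m}v_\lambda\in\Z v_\lambda$. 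Running over the basis $B^0$ of $\calU_\Z^0$ gives $\calU_\Z^0\cdot v_\lambda\subseteq\Z v_\lambda$, and the reverse inclusion is clear since $1\in\calU_\Z^0$.

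Next I would prove that $V_{\mu,\Z}:=V_\Z\cap V_\mu$ is a $\Z$-form of $V_\mu$ and that $V_\Z=\bigoplus_{\mu}V_{\mu,\Z}$. The inclusion $\bigoplus_\mu V_{\mu,\Z}\subseteq V_\Z$ is automatic, so the content is the reverse: every element of $V_\Z=\calU_\Z\cdot v_\lambda$ is a $\Z$-combination of weight vectors lying in $V_\Z$. Using the $\Z$-basis $\{b_-b_0b_+\mid b_\pm\in B^\pm,\ b_0\in B^0\}$ of $\calU_\Z$ from the Kostant proposition, we have $b_+\cdot v_\lambda\in\Z v_\lambda$ and $b_0\cdot v_\lambda\in\Z v_\lambda$ by the first part, so $V_\Z$ is spanned over $\Z$ by the elements $b_-\cdot v_\lambda=\bigl(\prod_{\a\in\Delta^+}x_{-\a}^{(m_\a)}\bigr)\cdot v_\lambda$. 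Each such element is homogeneous: it lies in the weight space $V_{\lambda-\sum_\a m_\a\a}$, because $x_{-\a}$ lowers the weight by $\a$ (from $\fg_{-\a}\cdot V_\nu\subseteq V_{\nu-\a}$). Grouping the spanning set by weight then exhibits $V_\Z$ as the direct sum of its intersections with the weight spaces, i.e.\ $V_\Z=\bigoplus_\mu V_{\mu,\Z}$ with $V_{\mu,\Z}$ spanned over $\Z$ by those $b_-\cdot v_\lambda$ of weight $\mu$. Finally, tensoring with $\C$: since $V_\Z\otimes_\Z\C=V$ and the weight decomposition of $V$ is compatible with the grading, we get $V_{\mu,\Z}\otimes_\Z\C=V_\mu$, so $V_{\mu,\Z}$ is indeed a $\Z$-form of $V_\mu$; it is finitely generated and torsion-free as a submodule of the lattice $V_\Z$, hence free.

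The main obstacle is the claim that the homogeneous components of $V_\Z$ are themselves sublattices spanning the weight spaces over $\C$ — i.e.\ that no ``denominators'' are hidden across weight spaces. This is handled precisely by the Kostant basis: because $\calU_\Z$ has a $\Z$-basis consisting of weight-homogeneous operators (the $b_-b_0b_+$ each shift weight by a fixed element of $Q$), applying them to the weight vector $v_\lambda$ produces a $\Z$-spanning set of $V_\Z$ each of whose members is weight-homogeneous, which forces the grading to split over $\Z$. I would make sure to invoke the Kostant proposition explicitly at this step rather than argue with arbitrary elements of $\calU_\Z$, since an unadorned element of $\calU$ need not be homogeneous. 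The surjectivity onto all of $V_\mu$ after $\otimes\C$ follows from $\calU^-\cdot v_\lambda=V^\lambda$ together with the fact that $\calU^-$ is spanned over $\C$ by the images of $B^-$.
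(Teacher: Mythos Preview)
The paper does not supply its own proof of this lemma; it merely cites \cite[Lemma~12]{Steinberg} and moves on. Your argument is correct and is essentially the standard one: use the Kostant $\Z$-basis $\{b_-b_0b_+\}$ of $\calU_\Z$, observe that $b_+$ and $b_0$ act on $v_\lambda$ by integer scalars, so $V_\Z$ is $\Z$-spanned by the weight-homogeneous vectors $b_-\cdot v_\lambda$, and then read off the graded decomposition. There is nothing to correct.
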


We have excluded the trivial module, but this case is straightforward: If $V\cong\C$
is the trivial module, then any nonzero $v_0\in V$ is a highest weight vector for the weight $\lambda=0$
and $V_\Z=\Z v_0$ is admissible.

\begin{longver}
Finally we consider  $\fg$  semisimple and $V$  an arbitrary finite-dimensional module.
Then $V$ is also semisimple, and so
 $$V =\bigoplus_{j=1}^m V_j$$
where each $V_j\cong V^{\lambda_j}$ for some $\lambda_1,\dots,\lambda_m\in P^+$.
If we choose highest weight vectors $v_j\in V_j$, then $V_{j,\Z}:= \calU_Z \cdot v_j$ is an admissible lattice in $V_j$, and
$$V_\Z =\bigoplus_{j=1}^m V_{j,\Z}$$
is an admissible lattice in $V$.
\end{longver}

%%%%%%%%%%%%%%%%%%%%%%%%%%%%%%%%%%%%%%%%%%%%%%%%%%%%%%
\subsection{The $\Z$-form of $\fg$ preserving $M$}
%By \cite{Steinberg}, Corollary 1 of Lemma 13, every finite dimensional $\mathfrak g$-module $V$ contains a lattice $M$ invariant under  $\calU_\Z$. That is, $M$ is invariant under $\mathcal{U}_\Z$. Moreover, $M$ is a direct sum of its weight spaces. 
Let $M$ be an admissible lattice in a highest weight module $V$ of a simple Lie algebra $\fg$.
Then 
$$\mathfrak g_{M}:=\{x\in\fg \mid x\cdot M\subseteq M\}$$
is an admissible form in $\fg$.
\begin{proposition}[\cite{Steinberg}, Corollary 2 of Lemma 13]
Since $\fg$ is simple and $V$ is irreducible,
$$ g_M = L_V^* \oplus \bigoplus_{\a\in\Delta}\Z\a =\fg_{L_V},$$
where 
$$L^*_V :=\{ h\in\mathfrak{h}\mid\langle\mu,h\rangle\in\Z\text{ for all } \mu\in{\rm wts}(V) \}.$$
\end{proposition}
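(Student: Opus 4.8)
The plan is to show the two inclusions $\fg_M \subseteq \fg_{L_V}$ and $\fg_{L_V} \subseteq \fg_M$ separately, using the weight-space decomposition $M = \bigoplus_{\mu\in\wts(V)} M_\mu$ from Lemma~\ref{lattwsd} (here $M_\mu := M\cap V_\mu$). First I would decompose $\fg_M$ along the root-space decomposition of $\fg$. An element $x\in\fg$ preserving $M$ can be written $x = h + \sum_{\a\in\Delta} c_\a x_\a$, and one checks that $x\cdot M\subseteq M$ forces each homogeneous component $h$ and $c_\a x_\a$ to preserve $M$ separately: since $\fg_\a\cdot V_\mu\subseteq V_{\mu+\a}$ and $M$ respects the weight grading, the component of $x\cdot v$ in a given weight space comes from a single graded piece of $x$. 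So it suffices to analyze the Cartan part and each root-space part in isolation.

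For the root-space part, I would show that $c_\a x_\a$ preserves $M$ if and only if $c_\a\in\Z$, i.e.\ $\fg_M \cap \fg_\a = \Z x_\a$. One direction is immediate since $x_\a\in\calU_\Z$ preserves the admissible lattice $M$. For the converse, the key point is to produce a weight $\mu$ and a vector $v_\mu\in M_\mu$ generating $M_\mu$ such that $x_\a\cdot v_\mu$ is a generator of $M_{\mu+\a}$; then $c_\a x_\a \cdot v_\mu = c_\a(x_\a v_\mu)\in M$ forces $c_\a\in\Z$. The natural candidate is to take $\mu$ extremal in the direction of $-\a$, so that $\mu-\a$ is not a weight but $\mu+\a$ is; Lemma~\ref{L-xx} (applied to the root $-\a$, after checking $\langle\mu,h_{-\a}\rangle = -\langle\mu,h_\a\rangle>0$) shows that $x_{-\a}^{(n)}x_\a^{(n)}$ or $x_\a^{(n)}x_{-\a}^{(n)}$ acts as the identity on the relevant weight vector, which pins down the normalization and shows $x_\a$ maps a $\Z$-generator to a $\Z$-generator. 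For the Cartan part, $h\in\fh$ preserves $M$ iff $h\cdot v\in M$ for all $v\in M_\mu$, i.e.\ iff $\mu(h)=\langle\mu,h\rangle\in\Z$ for every $\mu\in\wts(V)$, which is exactly the condition $h\in L_V^*$.

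Assembling these, $\fg_M = L_V^* \oplus \bigoplus_{\a\in\Delta}\Z x_\a = \fg_{L_V}$, and the fact that this is an admissible form is then immediate from the classification in the earlier theorem (every lattice of the form $\fg_L$ with $Q\subseteq L\subseteq P$ is admissible, and here $Q\subseteq L_V\subseteq P$ by the faithfulness lemma and Theorem~\ref{T-wtomnibus}). The main obstacle I anticipate is the root-space step: one needs to be careful that the extremal weight $\mu$ chosen so that $x_\a$ acts injectively on $M_\mu$ actually exists and that $x_\a\cdot v_\mu$ is not merely in $M_{\mu+\a}$ but a \emph{generator} of it — this is where irreducibility of $V$ and the precise form of Lemma~\ref{L-xx} do the real work, controlling the $\calU_\Z$-action finely enough to rule out $x_\a$ mapping into a proper sublattice. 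The Cartan part and the reduction to graded components are routine by comparison.
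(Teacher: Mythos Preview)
The paper does not supply its own proof of this proposition; it simply cites Steinberg (Corollary~2 of Lemma~13). So there is no in-paper argument to compare against, and your outline is essentially the standard route Steinberg takes: reduce to graded pieces via the weight decomposition of $M$, handle $\fh$ by the scalar action, and then argue that $\fg_M\cap\fg_\a=\Z x_\a$.

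Your reduction to homogeneous components and your treatment of the Cartan part are correct and routine. The root-space step, which you rightly identify as the crux, is however not quite closed by the argument you sketch. Lemma~\ref{L-xx} tells you that $x_\a^{(n)}x_{-\a}^{(n)}\cdot v_\nu=v_\nu$ (or the analogous identity with $\a\leftrightarrow-\a$), so it controls the divided power $x_\a^{(n)}$, not $x_\a$ itself. From this you can conclude that $v_\mu:=x_{-\a}^{(n)}v_\nu$ is primitive in $M$, but the further claim that $x_\a\cdot v_\mu$ is a \emph{generator} of $M_{\mu+\a}$ does not follow directly: one only gets $x_{-\a}(x_\a v_\mu)=n\,v_\mu$, which a priori allows $x_\a v_\mu$ to be $p$-divisible for primes $p\mid n$. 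You therefore still need an extra step—either producing a weight with $\langle\mu,h_\a\rangle=\pm1$ (not always available, e.g.\ the adjoint representation of $\mathfrak{sl}_2$), or running the argument along the full $\a$-string and using that the integers $1,2,\dots,n$ appearing as structure constants have $\gcd$ equal to $1$, or invoking the commutator $[c\,x_\a,x_{-\a}]=c\,h_\a$ together with the already-established Cartan computation. Any of these completes the proof, but as written your appeal to Lemma~\ref{L-xx} alone leaves a gap precisely at the point you flagged.
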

In particular, $\mathfrak g_{M}$ is independent of $M$, but is not independent of $V$.

Note that the adjoint module $V=\fg$ is a highest weight module whose highest weight is the highest root of $\Delta$.
In this case $L_V=Q$, so $\fg_{L_V}$ is the adjoint form $\fg_Q$.

\begin{lemma}
Let $\fg$ be simple, and fix a lattice $Q\le L\le P$.
Suppose   $L/Q$ is cyclic.
Then we can find a dominant weight $\lambda$
so that $L$ is generated by $Q$ and $\lambda$, and so $L=L_V$ where $V=V^\lambda$.
\end{lemma}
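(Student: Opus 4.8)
The plan is to choose a generator of the cyclic group $L/Q$, move it into the dominant cone using the Weyl group without changing its class modulo $Q$, and then invoke Theorem~\ref{T-wtirredomnibus}. Concretely, since $L/Q$ is cyclic I would first pick $\mu\in L$ whose image $\mu+Q$ generates $L/Q$, so that $L=Q+\Z\mu$; a priori $\mu\in P$ is some weight, not necessarily dominant.

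Next I would record that $W$ acts trivially on the fundamental group $P/Q$: for $\nu\in P$ and $i\in I$ the simple reflection satisfies $w_i\nu=\nu-\langle\nu,h_i\rangle\alpha_i$ with $\langle\nu,h_i\rangle\in\Z$, so $w_i\nu\equiv\nu\pmod Q$, and since $W=\langle w_1,\dots,w_\ell\rangle$ every $w\in W$ satisfies $w\nu\equiv\nu\pmod Q$; in particular $W$ preserves $P$ and each $W$-orbit in $P$ lies in a single coset of $Q$. I would then apply the classical fact (see \cite{Bourbaki4-6,Humphreys}) that the closed dominant Weyl chamber is a fundamental domain for the action of $W$ on $E=\R\Delta$: the orbit $W\mu$ contains a dominant weight $\lambda$, and since $\lambda\in P$ this gives $\lambda\in P^+$. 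By the previous remark, $\lambda-\mu\in Q$.

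To finish, from $\lambda-\mu\in Q\subseteq L$ and $\mu\in L$ we get $\lambda\in L$, while $\lambda+Q=\mu+Q$ still generates $L/Q$; hence $L=Q+\Z\lambda$. Taking $V=V^\lambda$, which exists because $\lambda\in P^+$ (Theorem~\ref{T-wtirredomnibus}), and using that any two weights of $V^\lambda$ differ by an element of $Q$ while $\lambda$ is itself a weight, we obtain $L_V=\Z\wts(V^\lambda)=Q+\Z\lambda=L$, as claimed. There is no genuinely hard step: the only external ingredient is the description of the fundamental domain of $W$ on $E$, and the one point that requires care is that replacing $\mu$ by its dominant representative $\lambda$ must not spoil the generation of $L/Q$ --- this is automatic precisely because $\lambda\equiv\mu\pmod Q$, which is why it is worth isolating the triviality of the $W$-action on $P/Q$ before selecting $\lambda$.
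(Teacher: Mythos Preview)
The paper states this lemma without proof, so there is no argument of the paper's to compare against; your approach is the natural one and is essentially correct.

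One small point deserves care. In the last step you assert $L_V=\Z\wts(V^\lambda)=Q+\Z\lambda$, but ``any two weights differ by an element of $Q$'' together with $\lambda\in\wts(V^\lambda)$ only yields $L_V\subseteq Q+\Z\lambda$. The reverse inclusion $Q\subseteq L_V$ is the content of the paper's lemma quoting \cite[Lemma~27]{Steinberg}, and that lemma needs $V^\lambda$ to be faithful, i.e.\ $\lambda\neq0$. When $L/Q$ is nontrivial this is automatic: a generator $\mu+Q$ forces $\mu\notin Q$, hence $\mu\neq0$, hence $\lambda\in W\mu$ is nonzero. But when $L=Q$ your recipe permits $\mu=0$, and then $\lambda=0$ gives the trivial module with $L_{V^0}=0\neq Q$. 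The fix is immediate---require $\mu\neq0$ from the outset (so $0\notin W\mu$), or handle $L=Q$ separately by taking $\lambda$ to be the highest root, which is dominant and lies in $Q$.
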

Note that this covers all cases, except for  simply connected forms of type $D_\ell$ with $\ell$ even.
In these exceptional cases, a reducible module $V$ is required.

\section{Chevalley groups}\label{universal}
 Let $\mathfrak g$ be a complex simple Lie algebra and let $(V,\rho)$ be a finite dimensional faithful $\mathfrak g$-module. The following automorphisms of $V$ will play a crucial role in defining Chevalley groups.

%%%%%%%%%%%%%%%%%%%%%%%%%%%%%%%%%%%%%%%%%%%%%%%%%%%%%%
\subsection{The Chevalley group over $\C$}
Recall that $x\in\frak{g}$ acts \emph{nilpotently} on $V$ if, for each $v\in V$, there is a natural number $n$ such that
$\rho(x)^{n}\cdot v=0$. Since $V$ is finite dimensional, any $x\in\frak{g}$ is automatically nilpotent.
It follows that
$$\chi_{\a}(t):=\exp(t\cdot \rho(x_{\a}))=I+t \rho(x_{\a})+\frac{t^2}{2}(\rho(x_{\a}))^2+\cdots$$ 
is a finite sum,  and hence an element of $\Aut_\fg(V)$. The \emph{root group} of $\a$ is now
$$U_\a= \{\chi_\a(t)\mid t \in \C^\times\}.$$

Given $\varpi\in P$ and $t\in\C^\times$, we  define $h_{\varpi}(t)$  on  $V=\bigoplus_{\mu\in\wts(V)} V_\mu$ by
$$h_{\varpi}(t)\cdot v = t^{\langle\mu, \varpi\rangle} v $$
for $v\in V_\mu$.
Then $h_{\varpi}(t)$ is also in $\Aut_\fg(V)$ and we define the \emph{toral group}
$$H_V=\langle h_{\varpi}(t)\mid \varpi\in L_V^*,\, t\in \C^\times\rangle.$$

Together these generate the  \emph{Chevalley group over $\Q$}  
$$G_V:=\langle H_V,\ U_{\alpha}\mid \a\in\Delta\rangle.$$
Moreover, $G_V\subseteq\GL(V)$ is a simple linear algebraic group, $H_V$ is a maximal torus of $G_V$, and the lattice $L_V$ can be realized as  the lattice ${\rm Hom}(H_, S^1
)$ of characters of $H_V$ \cite{Steinberg}. 

For $\alpha\in\Delta$ and  $s\in\Q^\times$, let  
 \begin{eqnarray}\label{rootref}
 \widetilde{w}_{\alpha}(s)= \chi_{\alpha}(s)\chi_{-\alpha}(-s^{-1})\chi_{\alpha}(s).
\end{eqnarray}
 For each $i\in I$, set $\widetilde{w}_{i}:=\widetilde{w}_{\alpha_{i}}=\widetilde{w}_{\alpha_{i}}(1)$. 
 We write $\widetilde{W}$ for the subgroup of $\Aut(V)$ generated by $\widetilde{w}_{i}$, for all $i\in I.$ 
Each $w\in W$ has a reduced decomposition $w_{j_{1}}w_{j_{2}}\dots w_{j_{m}}$, we denote the corresponding element 
\begin{eqnarray*}
\widetilde{w}=\widetilde{w}_{j_{1}}\widetilde{w}_{j_{2}}\dots \widetilde{w}_{j_{m}} \in\widetilde{W}.
\end{eqnarray*}
\begin{lemma}\label{ww} 
We have
$\widetilde{w}\cdot V_\mu=V_{w\mu}.$
In particular $\dim V_{w\mu}=\dim V_\mu$. 
\end{lemma}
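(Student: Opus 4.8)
The plan is to push everything down to the rank-one case, which has already been recorded in Section~\ref{SS-hwmodule}: for a root $\a$ and a weight $\mu\in\wts(V)$, every $v\in V_\mu$ satisfies $\widetilde{w}_\a(t)v = t^{-\langle\mu,\a^\vee\rangle}v'$ for some $v'\in V_{w_\a\mu}$, whence $\widetilde{w}_\a(t)\cdot V_\mu\subseteq V_{w_\a\mu}$. Applying this with $w_\a\mu$ in place of $\mu$ (and using $w_\a^2=1$) gives $\widetilde{w}_\a(t)\cdot V_{w_\a\mu}\subseteq V_\mu$, and since $\widetilde{w}_\a(t)\in\Aut(V)$ is invertible the two inclusions force equality $\widetilde{w}_\a(t)\cdot V_\mu = V_{w_\a\mu}$. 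In particular, taking $\a=\a_i$ and $t=1$, we get $\widetilde{w}_i\cdot V_\nu = V_{w_i\nu}$ for every $i\in I$ and every $\nu\in\wts(V)$.

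Next let $w = w_{j_1}w_{j_2}\cdots w_{j_m}$ be the chosen reduced decomposition and $\widetilde{w} = \widetilde{w}_{j_1}\widetilde{w}_{j_2}\cdots\widetilde{w}_{j_m}$ the corresponding element of $\widetilde{W}$. I would prove $\widetilde{w}\cdot V_\mu = V_{w\mu}$ by induction on $m$, the case $m=0$ being the tautology $V_\mu = V_{e\mu}$. For the inductive step, write $\widetilde{w} = \widetilde{w}_{j_1}\,\widetilde{u}$ with $\widetilde{u} = \widetilde{w}_{j_2}\cdots\widetilde{w}_{j_m}$ the element attached to $u = w_{j_2}\cdots w_{j_m}$; by the inductive hypothesis $\widetilde{u}\cdot V_\mu = V_{u\mu}$, and then, applying the rank-one statement with $i=j_1$ and $\nu=u\mu$,
$$\widetilde{w}\cdot V_\mu = \widetilde{w}_{j_1}\cdot V_{u\mu} = V_{w_{j_1}u\mu} = V_{w\mu}.$$
Note that reducedness of the word in fact plays no role here: the identity $\widetilde{w}_i\cdot V_\nu = V_{w_i\nu}$ holds for arbitrary $\nu$, so the same telescoping shows $\widetilde{w}_{j_1}\cdots\widetilde{w}_{j_m}\cdot V_\mu = V_{w_{j_1}\cdots w_{j_m}\,\mu}$ for any word representing $w$; and although $\widetilde{w}$ itself may depend on the chosen reduced word, the statement concerns one fixed such element, so no well-definedness issue needs to be settled.

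Finally, since $\widetilde{w}\in\Aut(V)$ is a linear automorphism of $V$ carrying $V_\mu$ onto $V_{w\mu}$, its restriction is a linear isomorphism $V_\mu\xrightarrow{\ \sim\ }V_{w\mu}$, and hence $\dim V_{w\mu} = \dim V_\mu$. I do not expect a genuine obstacle: the entire content sits in the rank-one computation, which is available, and what remains is a short induction on the number of factors. The only two points deserving a line of care are upgrading the stated ``there exists $v'$'' assertion to the equality $\widetilde{w}_\a(t)\cdot V_\mu = V_{w_\a\mu}$, and observing that the argument is insensitive to which (reduced or not) word represents $w$.
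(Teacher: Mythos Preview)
Your proof is correct and follows precisely the route the paper has in mind: the lemma is stated without proof, being taken as an immediate consequence of the rank-one identity $\widetilde{w}_\a(t)\cdot V_\mu = V_{w_\a\mu}$ already recorded in Section~\ref{SS-hwmodule} (note that the paper asserts the full equality there, not just the inclusion, so your ``upgrading'' step is already granted). Your induction on the number of simple-reflection factors is the standard way to promote this to arbitrary $\widetilde{w}$, and your observation that the argument is insensitive to which word represents $w$ is apt.
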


By Lemma~\ref{weroot}, for each $\a\in\Delta$, 
 we have  $\alpha=w\alpha_i$, for some $w\in W$. 
It follows that
$$\chi_{\alpha}(t)=\exp(t\rho( x_{\a}))= \widetilde{w}\chi_{\alpha_i}(\pm t)\widetilde{w}^{-1}.$$

\begin{comment}
\begin{proposition}[\cite{MP95}, p.\ 496]
Let $V$  be a finite dimensional faithful $\frak g$-module
and let $\alpha\in\Delta$.
Let  $\mu\in\wts(V)$.
Then for all $t\in\C^{\times}$, ${h}_{i}(t):V_{\mu}\longrightarrow V_{\mu}$ and ${h}_{i}(t)$ acts as scalar multiplication by $t^{\langle\mu,\a_i^{\vee}\rangle }$ on $V_{\mu}$.
\end{proposition}

%\begin{lemma}
    %$h_\varpi(t)$ is in $\Aut_\fg(V)$ if  $\varpi\in L_V$.
%\end{lemma}

 Note that if  $\omega_i$ is a fundamental weight for some $i\in I$, then $\langle\omega_i,\a_j^{\vee}\rangle =\delta_{ij}$. Thus $h_i(t)$ acts as the identity on $V_{\omega_j}$ for $i\neq j$ and as scalar multiplication by $t$ on $V_{\omega_i}$.

\end{comment}

If $ L=Q$ then $G_V(\C)$ is called an \emph{adjoint Chevalley group}. If $ L=P$,  then $G_V(\C)$ is called a \emph{simply connected Chevalley group}.
If $Q$ and $P$ are isomorphic, then the adjoint and simply connected groups coincide.
  
\begin{proposition}[\cite{Steinberg}]\label{sl2s'}  For each $i\in I$ there is a homomorphism 
$$\varphi_i:\SL_2(\C)\longrightarrow G_V(\C)$$
with
$$\varphi_i\left(\begin{matrix} 1 & s\\ 0 & 1\end{matrix}\right)\mapsto \chi_{\alpha_i}(s),\ s\in \C,$$
$$\varphi_i\left(\begin{matrix} 1 & 0\\ t & 1\end{matrix}\right)\mapsto \chi_{-\alpha_i}(t),\ t\in \C.$$
\end{proposition}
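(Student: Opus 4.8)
The plan is to construct the homomorphism $\varphi_i$ directly on generators and verify the Steinberg relations, exploiting the $\mathrm{SL}_2$-theory already embedded in the action of $\fg$ on $V$. First I would restrict attention to the rank-one situation: the subalgebra $\mathfrak{s}_i = \C x_{\alpha_i} \oplus \C h_i \oplus \C x_{-\alpha_i} \cong \mathfrak{sl}_2(\C)$, with the Chevalley basis elements $x_{\pm\alpha_i}$ corresponding to the standard nilpotents $e,f$ and $h_i$ to the standard semisimple element. Since $V$ is finite-dimensional, $\rho(x_{\pm\alpha_i})$ act nilpotently, so $\chi_{\alpha_i}(s) = \exp(s\,\rho(x_{\alpha_i}))$ and $\chi_{-\alpha_i}(t) = \exp(t\,\rho(x_{-\alpha_i}))$ are well-defined elements of $\mathrm{Aut}_\fg(V) \subseteq \mathrm{GL}(V)$, and by definition they lie in the root groups $U_{\alpha_i}, U_{-\alpha_i} \subseteq G_V(\C)$. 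I would then set
$$
\varphi_i\!\begin{pmatrix} 1 & s \\ 0 & 1\end{pmatrix} := \chi_{\alpha_i}(s), \qquad
\varphi_i\!\begin{pmatrix} 1 & 0 \\ t & 1\end{pmatrix} := \chi_{-\alpha_i}(t),
$$
and extend to all of $\mathrm{SL}_2(\C)$ using that these two families of matrices generate $\mathrm{SL}_2(\C)$.

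The substantive point is that this assignment is well-defined as a group homomorphism. The cleanest route is representation-theoretic: $V$, viewed as an $\mathfrak{s}_i$-module, decomposes into a direct sum of finite-dimensional irreducible $\mathfrak{sl}_2(\C)$-modules $V = \bigoplus_k W_k$, each $W_k$ being the $(n_k+1)$-dimensional irreducible. On each $W_k$ the operators $\exp(s\,\rho(x_{\alpha_i}))$ and $\exp(t\,\rho(x_{-\alpha_i}))$ are exactly the images of $\begin{psmallmatrix}1&s\\0&1\end{psmallmatrix}$ and $\begin{psmallmatrix}1&0\\t&1\end{psmallmatrix}$ under the $(n_k+1)$-dimensional irreducible rational representation $\sigma_k$ of $\mathrm{SL}_2(\C)$ — this is the standard fact that the Lie-algebra representation $\rho|_{\mathfrak{s}_i}$ on $W_k$ integrates to the rational representation $\sigma_k$ of the simply connected group $\mathrm{SL}_2(\C)$, and under that integration $\exp$ of a nilpotent matches the group exponential. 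Hence there is a genuine rational homomorphism $\sigma := \bigoplus_k \sigma_k : \mathrm{SL}_2(\C) \to \mathrm{GL}(V)$ sending the upper/lower unitriangular matrices to the $\chi_{\pm\alpha_i}$. Its image lies in $G_V(\C)$ since $G_V(\C)$ is generated by the $U_\alpha$ and $H_V$ and already contains $U_{\pm\alpha_i}$ — one checks the image is generated by $U_{\alpha_i}$ and $U_{-\alpha_i}$. Taking $\varphi_i := \sigma$ gives the required homomorphism with the stated values on the two one-parameter subgroups.

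The main obstacle — really the only nontrivial input — is justifying that the Lie-algebra action of $\mathfrak{s}_i$ on each irreducible constituent $W_k$ integrates to the unique rational representation of $\mathrm{SL}_2(\C)$ of that dimension, and that $\exp$ intertwines the two pictures on nilpotent elements. This is classical ($\mathrm{SL}_2(\C)$ is simply connected, so every finite-dimensional $\mathfrak{sl}_2$-representation lifts; and on a unipotent one-parameter subgroup the differential-equation characterization of $\exp$ forces agreement), but it is where care is needed to keep the argument self-contained in the representation-theoretic framework of the paper. An alternative, more hands-on route avoiding the word "integrate" is to verify the needed relations directly from the finite sums defining $\chi_{\pm\alpha_i}(s)$: the commutator relation $\widetilde{w}_i(1)\,\chi_{\alpha_i}(s)\,\widetilde{w}_i(1)^{-1} = \chi_{-\alpha_i}(-s)$ (a special case of the conjugation identity already recorded before Proposition~\ref{sl2s'}), the toral relation $h_i(t) = \widetilde{w}_i(t)\widetilde{w}_i(1)^{-1}$ acting as $t^{\langle\mu,h_i\rangle}$ on $V_\mu$, and the evident additivity $\chi_{\pm\alpha_i}(s)\chi_{\pm\alpha_i}(s') = \chi_{\pm\alpha_i}(s+s')$; these are precisely the relations defining $\mathrm{SL}_2$ on its standard generators, so any word in the $\begin{psmallmatrix}1&*\\0&1\end{psmallmatrix}$'s and $\begin{psmallmatrix}1&0\\ *&1\end{psmallmatrix}$'s that is trivial in $\mathrm{SL}_2(\C)$ maps to the identity in $G_V(\C)$, yielding the homomorphism. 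Either way the proof is short; I would present the representation-theoretic version as the main line and remark on the direct verification.
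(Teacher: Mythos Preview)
The paper does not supply a proof of this proposition; it simply cites Steinberg's lecture notes and moves on. So there is no ``paper's own proof'' to compare against.

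Your argument is correct. The representation-theoretic line---restrict to $\mathfrak{s}_i\cong\mathfrak{sl}_2(\C)$, decompose $V$ into finite-dimensional irreducibles, and invoke simple connectedness of $\SL_2(\C)$ to integrate each summand---is the cleanest way to produce $\varphi_i$ in this setting, and the image lands in $G_V(\C)$ because it is already generated by elements of $U_{\alpha_i}$ and $U_{-\alpha_i}$. Steinberg's own treatment is closer to your ``alternative'' route: he works with the abstract Steinberg presentation and checks the defining relations directly (additivity of $\chi_{\pm\alpha_i}$, the conjugation formula for $\widetilde w_i$, the toral identity for $h_i(t)$), which is what one must do anyway if the target group is defined by generators and relations rather than as a subgroup of $\GL(V)$. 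Your main line exploits the concrete realization inside $\GL(V)$ and is shorter for that reason; the relation-checking route is more portable to abstract or infinite-dimensional settings. Either is fine here.

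One small wording point: the phrase ``$\Aut_\fg(V)$'' is borrowed from the paper but is a mild abuse---$\chi_{\alpha_i}(s)$ is not a $\fg$-module automorphism in the usual sense. It does no harm to your argument, which only needs $\chi_{\pm\alpha_i}(s)\in\GL(V)$.
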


The following theorem  shows that the isomorphism type of $G_V(\C)$ is independent of the choice of representation.
\begin{theorem}[\cite{Steinberg}, Corollary 5]\label{uniqueness}  Let $V_1$ and $V_2$ be faithful $\fg$-modules with $L_{V_1}=L_{V_2}$,
then $G_{V_1}({\C})\cong G_{V_2}({\C}).$
\end{theorem}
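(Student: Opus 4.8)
The plan is to construct the isomorphism abstractly via Steinberg's presentation of the Chevalley group, using only the data that depends on the root datum. First I would recall that both $G_{V_1}(\C)$ and $G_{V_2}(\C)$ are generated by symbols $\chi_\a(t)$ ($\a\in\Delta$, $t\in\C$) and $h_\varpi(t)$ ($\varpi\in L_{V_i}^*$, $t\in\C^\times$), and that Steinberg (\cite{Steinberg}, Theorem~8 and the discussion of relations) gives a finite list of defining relations among these generators: the additivity relations $\chi_\a(s)\chi_\a(t)=\chi_\a(s+t)$, the Chevalley commutator formulas $[\chi_\a(s),\chi_\b(t)]=\prod \chi_{i\a+j\b}(c_{ij}s^it^j)$ with the $c_{ij}$ built from the structure constants $n_{\a\b}$, the relations expressing $\widetilde w_\a(s)\chi_\b(t)\widetilde w_\a(s)^{-1}$ and $h_\varpi(t)\chi_\b(u)h_\varpi(t)^{-1}=\chi_\b(t^{\langle\b,\varpi\rangle}u)$, and the torus relations $h_\varpi(s)h_\varpi(t)=h_\varpi(st)$, $h_\varpi(t)h_{\varpi'}(t)=h_{\varpi+\varpi'}(t)$. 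The key observation is that every coefficient appearing in these relations depends only on $\Delta$, the Chevalley structure constants, and the pairing between $\Delta$ and $L_{V_i}^*$ — and the condition $L_{V_1}=L_{V_2}$ forces $L_{V_1}^*=L_{V_2}^*$, so the two presentations are literally identical.

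The steps I would carry out, in order: (1) cite Steinberg to get that $G_{V_i}(\C)$ is isomorphic to the abstract group $\widehat G(L_{V_i})$ presented by the above generators and relations — i.e.\ the natural surjection from the presented group onto the matrix group $G_{V_i}(\C)\subseteq\GL(V_i)$ is an isomorphism for the simply connected type, and more generally one uses Steinberg's analysis of the kernel (the center) which again depends only on the root datum; (2) observe that $L_{V_1}=L_{V_2}$ implies $L_{V_1}^*=L_{V_2}^*$ by taking duals, so the index sets and all structure constants in the two presentations coincide, whence $\widehat G(L_{V_1})=\widehat G(L_{V_2})$ as abstract groups; (3) compose to obtain $G_{V_1}(\C)\cong\widehat G(L_{V_1})=\widehat G(L_{V_2})\cong G_{V_2}(\C)$, with the isomorphism sending $\chi_\a(t)\mapsto\chi_\a(t)$ and $h_\varpi(t)\mapsto h_\varpi(t)$.

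Alternatively — and this may be cleaner to write — I would realize the isomorphism more concretely: the construction of $G_V(\C)$ in Section~\ref{universal} used only the representation $\rho$ to make sense of $\exp(t\rho(x_\a))$, but the abstract group it generates, together with its action, is governed by $\fg_{\Z}$ (which is $V$-independent) and the lattice $L_V$ (which records the characters of the torus). So I would take the module $V_1\oplus V_2$, which is faithful with weight lattice $L_{V_1\oplus V_2}=L_{V_1}+L_{V_2}=L_{V_1}$, form $G_{V_1\oplus V_2}(\C)$, and exhibit the two projections $G_{V_1\oplus V_2}(\C)\to G_{V_i}(\C)$ (defined on generators by $\chi_\a(t)\mapsto\chi_\a(t)$); each is surjective by construction, and each is injective because $L_{V_i}^*=L_{V_1\oplus V_2}^*$ guarantees the torus $H$ injects and the unipotent parts inject via the faithful $\fg$-action on the summand, so both projections are isomorphisms and the result follows.

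The main obstacle is step (1)/injectivity: one must know that the map from the Steinberg-presented group to the linear group $G_{V_i}(\C)$ has trivial kernel, equivalently that no nontrivial relation among the $\chi_\a(t)$ and $h_\varpi(t)$ holds in $\GL(V_i)$ beyond those forced by the root datum. For simply connected type this is exactly Steinberg's theorem that the presentation is faithful; for intermediate lattices $L$ one needs the identification of the center, and one must check the torus $H_{V_i}=\langle h_\varpi(t)\mid\varpi\in L_{V_i}^*\rangle$ has the ``right'' size — i.e.\ that $\langle\varpi,\mu\rangle$ for $\mu\in\wts(V_i)$ detects exactly $L_{V_i}^*$, which is where faithfulness of $V_i$ and $Q\subseteq L_{V_i}$ enter. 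Once the kernel is pinned down as a root-datum invariant, the isomorphism is immediate. I would lean on \cite{Steinberg} for this injectivity input and keep the written proof short, emphasizing that $L_{V_1}=L_{V_2}\Rightarrow L_{V_1}^*=L_{V_2}^*$ is the only new point.
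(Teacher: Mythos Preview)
The paper does not give a proof of this theorem: it is stated as a citation to \cite{Steinberg}, Corollary~5, and no argument is supplied in the text. So there is no ``paper's own proof'' to compare against.

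That said, your sketch is sound and is essentially the standard argument one finds in Steinberg. Your second route (form $G_{V_1\oplus V_2}(\C)$ and show both projections are isomorphisms) is in fact closer to the way Steinberg organizes the proof in his lecture notes, and it has the advantage that injectivity reduces to the concrete statement that the kernel of each projection meets the torus trivially, which follows from $L_{V_1}^*=L_{V_2}^*=L_{V_1\oplus V_2}^*$, together with the Bruhat decomposition to handle non-toral elements. Your first route (compare abstract Steinberg presentations) is also correct but, as you note, leans more heavily on the nontrivial input that the Steinberg presentation is faithful for the appropriate central quotient; this is exactly the content of \cite{Steinberg} you would be citing anyway. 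Either version would be acceptable as a write-up; since the paper only quotes the result, your level of detail already exceeds what the paper provides.
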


%%%%%%%%%%%%%%%%%%%%%%%%%%%%%%%%%%%%%%%%%%%%%%%%%%%%%%
\subsection{Forms of $G_V$}
We continue to use the notation from Section~\ref{S-hwmodule}.
So $V_{\mathbb{Z}}=\calU_\Z\cdot v_\lambda$ is a $\Z$-form of $V$ such that ${\mathcal U}_{{\Z}}\cdot V_\Z\subseteq V_\Z$.
Let $\rho_\Z$ be the restriction of $\rho$ to $V_\Z$
So $V_\Z$ is also a $\calU_\Z$-module and $\rho_\Z$ extends to a representation of $\calU_\Z$.
We can define
$$\exp(tx_\a) := 1 + tx_\a^{(1)} + tx_\a^{(2)} + \cdots$$
as an element of  the ring 
$\mathcal{U}_\Z[[t]]$ of formal power series in variable
$t$ over $\mathcal{U}_\Z$.  
Specializing to $t\in\Z$ we get
$$\chi_\a(t) =\rho_\Z(\exp(tx_\a))\in\GL(V_\Z). $$
So we can define
\begin{align*}
    U_\a(\Z) &:= \{\chi_\a(t)\mid t \in \Z^\},\\
    H_V(\Z)&:=\langle h_{\varpi}(t)\mid \varpi\in L_V^*,\, t\in \Z^\times\rangle,\text{ and}\\
    G_V(\Z)&:=\langle H_V(\Z),\ U_{\alpha}(\Z)\mid \a\in\Delta\rangle.
\end{align*}
Note that $\Z^\times=\{\pm1\}$, so $H_V(\Z)=\langle h_{\varpi}(-1)\mid \varpi\in L_V^*\rangle$.

Let $\a,\b$ be roots with $\a+\b\neq 0$. Then in the ring 
$\mathcal{U}_\Z[[t, u]]$ of formal power series in
$t, u$ over $\mathcal{U}_\Z$,  by \cite{Steinberg} we have 
$$(\exp(t\rho(x_\a)),\exp(u\rho(x_\b)))=\prod_{i,j>0}\exp(c_{ij}t^iu^j \rho(x_{i\a+j\b}))$$
where $(g,h)=ghg^{-1}h^{-1}$, the product on the right is taken over all roots $i\a+j\b$ arranged in some fixed order for $i,j>0$, and  the $c_{ij}$s are integers depending on $\a,\b$
and the chosen ordering, but not on $t$ or $u$. Moreover,  $c_{11} = p_{\a\b}$.

Let $U(\C)$ be the group generated by all $\chi_{\alpha}(t)$ for $\a\in\Delta^+$ and all $t\in\C$. 
It follows that
$$ U(\Z) := \langle \chi_{\a_i}(t) \mid t\in\Z\rangle$$
is a nilpotent group containing $U_\a(\Z)$ for all positive roots $\a$.

\begin{longver}
    For example, $L=Q$ if $V$ is the adjoint representation and $L=P$ if $V$ is the direct sum of representations having fundamental weights as
dominant weights. In this case, $V=V^{\omega_1}\oplus\dots \oplus V^{\omega_{\ell}}$ is not irreducible, but is the direct sum of irreducible highest weight modules and $\wts(V)=\bigcup_{i=1}^{\ell} \wts(V^{\omega_i}),$ hence the lattice generated by $\wts(V)$ is the weight lattice $P$. 

If $\lambda=\sum_{i}a_i\omega_i$ and $a_i\geq 0$ for each $i$ then the highest weight module $V^\lambda$ with highest weight $\lambda$ is an irreducible module.

\end{longver}

Finally we can define groups over an arbitrary commutative ring $\K$. Note that we only use $\K=\Z,\Q,\C$ in this paper.
Set $V_\K:=\K\otimes_{\Z}V_\Z$, $\fg_\K:=\K\otimes_\Z\fg_\Z$, and $\calU_\K:= \K\otimes_\Z \calU_\Z$.
Then $V_\K$ is a $\calU_\K$ module with representation $\rho_\K: \calU_\K\rightarrow\End(V_\K)$.
Also  $\chi_{\alpha}(a)$ and $h_\varpi(t)$ are elements of $\Aut_{\fg_\K}(V_\K)$ for all
$\a\in\Delta$, $\varpi\in L_V$, $s\in\K$, $t\in\K^\times$. Define
\begin{align*}
    U_\a(\K) &:= \{\chi_\a(t)\mid t \in \K\},\\
    H_V(\K)&:=\langle h_{\varpi}(t)\mid \varpi\in L_V^*,\, t\in \K^\times\rangle,\text{ and}\\
    G_V(\K)&:=\langle H_V(\K),\ U_{\alpha}(\K)\mid \a\in\Delta\rangle.
\end{align*}
For $\K=\Z$, this coincides with the definitions given earlier in this section.
For $\K=\C$, $U_\a(\C)=U_a$, $H_V(\C)=H_V$, and $G_V(\C)=G_V$.
We also have
$$ U(\K) := \langle \chi_{\a_i}(t) \mid t\in\K\rangle$$
is a nilpotent group containing $U_\a(\K)$ for all positive roots $\a$.

%%%%%%%%%%%%%%%%%%%%%%%%%%%%%%%%%%%%%%%%%%%%%%%%%%%%%%
%%%%%%%%%%%%%%%%%%%%%%%%%%%%%%%%%%%%%%%%%%%%%%%%%%%%%%
\section{Example: Integrality in $\SL_2(\Q)$}\label{integrality} 

Let $\mathfrak g$ be the Lie algebra $\mathfrak{sl}_2$ of  $2\times 2$ matrices of trace zero with Chevalley basis $$x_{\a}=\left(\begin{matrix} 0 & 1\\ 0 & 0\end{matrix}\right),\ x_{-\a}=\left(\begin{matrix} 0 & 0\\ 1 & 0\end{matrix}\right),\ h_{\a}=\left(\begin{matrix} 1 & 0\\ 0 & -1\end{matrix}\right).$$

There is  one positive root $\alpha$ and one fundamental weight $\omega=\frac{1}{2}\alpha$. We have $\langle\omega,\alpha\rangle=2$ and $\alpha$ and $\omega$ are both dominant integral weights. Thus
$$Q=\Z\alpha\text{ and } P=\Z\omega=\frac{1}{2}\Z\alpha,$$
thus
$P/Q\cong\mathbb{Z}/2\mathbb{Z}$.

\begin{lemma} The standard representation $V_{\Q}=\Q\oplus\Q$ can be realized as a faithful highest weight representation $\rho$ of $\mathfrak{sl}_2$ with highest weight   $\omega$. The subset $V_{\Z}=\Z\oplus\Z$ is an admissible lattice in $V_{\Q}$.
\end{lemma}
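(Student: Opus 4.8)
The plan is to verify directly that the standard action of $\mathfrak{sl}_2$ on $\Q^2$ satisfies the definition of a faithful highest weight module with highest weight $\omega$, and then to check that $\Z^2$ is stable under $\calU_\Z$. First I would set $v_1 = (1,0)^{\mathsf T}$ and $v_2 = (0,1)^{\mathsf T}$ and record the elementary computations $x_\a\cdot v_1 = 0$, $x_\a \cdot v_2 = v_1$, $x_{-\a}\cdot v_1 = v_2$, $x_{-\a}\cdot v_2 = 0$, $h_\a\cdot v_1 = v_1$, $h_\a\cdot v_2 = -v_2$. In particular $v_1$ is a weight vector of weight $\mu_1$ with $\langle \mu_1, h_\a\rangle = 1$, and since $\omega$ is characterized by $\langle\omega,h_\a\rangle = 1$ we get $\mu_1 = \omega$; similarly $v_2$ has weight $-\omega$. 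Thus $V_\Q = V_\omega \oplus V_{-\omega}$ is a weight module, $x_\a$ annihilates the top weight vector $v_1$, so $\np^+\cdot v_1 = 0$, and $v_2 = x_{-\a}\cdot v_1 \in \np^-\cdot v_1$, so $\calU(\mathfrak{sl}_2)\cdot v_1 = V_\Q$. Hence $V_\Q$ is a highest weight module with highest weight $\omega$, and it is faithful since no nonzero trace-zero matrix annihilates both $v_1$ and $v_2$ (equivalently, the representation is the identity map $\mathfrak{sl}_2 \hookrightarrow \mathfrak{gl}_2$).

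Next I would show $V_\Z = \Z v_1 \oplus \Z v_2$ is an admissible lattice, i.e.\ a $\Z$-form of $V_\Q$ preserved by $\calU_\Z$. It is clearly a $\Z$-form, since $V_\Z\otimes_\Z\Q \to V_\Q$ is the obvious isomorphism. For $\calU_\Z$-stability, by the description of $\calU_\Z$ in Subsection~\ref{CSform} it suffices to check that the algebra generators $x_{\pm\a}^{(m)}$ and $\binom{h_\a}{m}$ preserve $V_\Z$. Since $x_\a^2 = x_{-\a}^2 = 0$ as operators on $V_\Q$ (each is a nilpotent $2\times 2$ matrix of rank one), we have $x_{\pm\a}^{(m)} = 0$ for $m\ge 2$ and $x_{\pm\a}^{(0)} = 1$, $x_{\pm\a}^{(1)} = x_{\pm\a}$; the latter visibly map $\Z v_1 \oplus \Z v_2$ into itself by the table above. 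For the Cartan part, $h_\a$ acts diagonally with eigenvalues $\pm 1$, so $\binom{h_\a}{m}$ acts on $v_1$ as $\binom{1}{m}$ and on $v_2$ as $\binom{-1}{m}$, both integers; hence $\binom{h_\a}{m}\cdot V_\Z \subseteq V_\Z$. Therefore $\calU_\Z\cdot V_\Z\subseteq V_\Z$, so $V_\Z$ is admissible. (Alternatively one could invoke the theorem that $V_\Z := \calU_\Z\cdot v_\lambda$ is admissible and simply identify $\calU_\Z\cdot v_1$ with $\Z v_1\oplus\Z v_2$, which follows from $x_{-\a}^{(1)}\cdot v_1 = v_2$ and $x_{-\a}^{(m)}\cdot v_1 = 0$ for $m\ge 2$.)

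There is no real obstacle here; the only thing to be slightly careful about is the identification of the weight of $v_1$ with $\omega$ rather than $\a$ — one must use the pairing $\langle\omega,h_\a\rangle = 1$ (equivalently that $\omega = \tfrac12\a$ and $\langle\a,h_\a\rangle = 2$) to see that the representation has highest weight $\omega$ and not the adjoint highest weight. Everything else is a three-line matrix computation, and the admissibility reduces to the observation that the relevant divided powers either vanish or are already integral operators on this two-dimensional module.
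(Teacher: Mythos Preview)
Your proof is correct and follows essentially the same route as the paper: identify the weight spaces via the action of $h_\a$, note that $x_{\pm\a}^2=0$ so the nontrivial divided powers vanish, and observe that the Cartan generators act by integer scalars. The only cosmetic difference is that the paper packages the admissibility check as the identification $V_\Z=\calU_\Z\cdot v_\omega=\calU_\Z^-\cdot\Z v_\omega$ (thereby invoking the Chevalley theorem that $\calU_\Z\cdot v_\lambda$ is admissible), whereas you verify $\calU_\Z\cdot V_\Z\subseteq V_\Z$ directly on generators---but you also note this alternative at the end, so the two arguments are equivalent.
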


\begin{proof}  The representation $V_{\Q}=\Q\oplus\Q$ is a faithful $\mathfrak{sl}_2$-module.
The weights of $V_{\Q}$ are $-\omega$, $0$, $\omega$ with $\langle\omega,h_\alpha\rangle=1$, so $\omega$ is the highest weight.

Let $\mathcal{U}_{\Z}$ denote the Cartier--Kostant $\Z$-form of the universal enveloping algebra $\mathcal{U}(\Q)$ of $\mathfrak{sl}_2$. Then  $\mathcal{U}(\Q)$ has $\Z$-subalgebras $\mathcal{U}^-_{\Z}$, $\mathcal{U}^0_{\Z}$ and $\mathcal{U}^+_{\Z}$ with $\Z$-bases consisting of $\dfrac{(x_{-\a})^m}{m!}$,  $\left (\begin{matrix} h_{\a}\\ b\end{matrix}\right )$ and $\dfrac{(x_{\a})^n}{n!}$ respectively, for $m,n,b\in\Z_{\geq 0}$ such that
$\mathcal{U}_{\Z}=\mathcal{U}^-_{\Z}\mathcal{U}^0_{\Z}\mathcal{U}^+_{\Z}.$
We have
$$V_{\Z}=\Z\left(\begin{matrix} 1 \\ 0\end{matrix}\right)\oplus \Z\left(\begin{matrix} 0 \\ 1\end{matrix}\right)=
V_{\omega}\oplus V_{-\omega}$$
and 
$$x_{-\a}\cdot v_\omega=\left(\begin{matrix}  0 & 0 \\ 1 & 0  \end{matrix}\right)\left(\begin{matrix} 1 \\ 0 \end{matrix}\right)= 
\left(\begin{matrix} 0 \\ 1\end{matrix}\right).$$
That is, $$x_{-\a}:V_{\omega}\mapsto V_{-\omega}.$$
Since $x_{\a}^2=0$,  $$\dfrac{(x_{-\a})^m}{m!}\cdot V_{\Z} \subseteq  V_{\Z},$$
for $m\in {\Z}_{\geq 0}$. 
Moreover
$$\mathcal{U}_{\Z}\cdot v_{\omega}=\mathcal{U}^-_{\Z}\cdot \Z v_{\omega}$$
since all non-trivial elements of $\calU^+_\Z$  annihilate $v_{\omega}$ and $\left (\begin{matrix} h_{\a}\\ b\end{matrix}\right )$ acts as scalar multiplication on $v_{\omega}$ by a $\Z$-valued scalar (\cite{Humphreys}, Theorem 27.1) for $b\in\Z_{\geq 0}$. Thus $\Z\oplus\Z$ is an admissible lattice in~$V_\Q$.
\end{proof}

The group  $G(\mathbb{Q})$ is the simply connected group $\SL_2(\mathbb{\Q})$. That is, 
\begin{align*}  
G(\mathbb{Q})&=\langle\chi_{\a}(s),\chi_{-\a}(t)\mid s,t\in\Q\rangle\\
&=\langle \exp(s\rho(x_{\a})),\ \exp(t\rho(x_{-\a}))\mid s,t\in\Q\rangle\\
&=\left\langle \left(\begin{matrix} 1 & s\\ 0 & 1\end{matrix}\right),\ \left(\begin{matrix} 1 & 0\\ t & 1\end{matrix}\right)\mid s,t\in\Q\right\rangle\\
&=\SL_2(\Q).
\end{align*}

Moreover $\SL_2(\Q)$  acts on $V_{\Q}$:
$\left(\begin{matrix} a & b\\ c & d\end{matrix}\right)\cdot \left(\begin{matrix} x \\ y \end{matrix}\right)=
\left(\begin{matrix} ax+ by\\ cx +dy\end{matrix}\right).$

We note that  $G(\Z)=\SL_2(\mathbb{Z})$ is generated by $\chi_{\alpha}(s)$ and $\chi_{-\alpha}(t)$ for $s,t\in\Z$. %In fact. $\SL_2(\mathbb{Z})$ is generated by $\chi_{\alpha}(1)$ and $ \chi_{-\alpha}(1)$, since $(\chi_{\pm\alpha}(1))^s=\chi_{\pm\alpha}(s)$.

\begin{proposition} The subgroup $\SL_2(\Z)$ of $\SL_2(\Q)$ is the stabilizer of $V_\Z=\Z\oplus\Z$ in  the standard representation $V_{\Q}=\Q\oplus\Q$ of $\mathfrak{sl}_2(\Q)$.
\end{proposition}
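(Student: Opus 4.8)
The plan is to establish the two inclusions $\SL_2(\Z)\subseteq\Gamma(\Z)$ and $\Gamma(\Z)\subseteq\SL_2(\Z)$ by working directly with the explicit matrix action of $\SL_2(\Q)$ on $V_\Q=\Q\oplus\Q$ described above, and then to invoke the identification $\SL_2(\Z)=G(\Z)$ already recorded before the statement.

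For the inclusion $\SL_2(\Z)\subseteq\Gamma(\Z)$: if $g=\left(\begin{smallmatrix}a&b\\c&d\end{smallmatrix}\right)\in\SL_2(\Z)$, then its entries are integers, so the action formula $g\cdot\left(\begin{smallmatrix}x\\y\end{smallmatrix}\right)=\left(\begin{smallmatrix}ax+by\\cx+dy\end{smallmatrix}\right)$ immediately gives $g\cdot V_\Z\subseteq V_\Z$; applying this also to $g^{-1}\in\SL_2(\Z)$ yields $g\cdot V_\Z=V_\Z$, so $g\in\Gamma(\Z)$.

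For the reverse inclusion $\Gamma(\Z)\subseteq\SL_2(\Z)$: suppose $g=\left(\begin{smallmatrix}a&b\\c&d\end{smallmatrix}\right)\in\SL_2(\Q)$ satisfies $g\cdot V_\Z=V_\Z$. Evaluating $g$ on the two weight vectors $v_\omega=\left(\begin{smallmatrix}1\\0\end{smallmatrix}\right)$ and $v_{-\omega}=\left(\begin{smallmatrix}0\\1\end{smallmatrix}\right)$, which form a $\Z$-basis of $V_\Z=V_\omega\oplus V_{-\omega}$, produces the columns $\left(\begin{smallmatrix}a\\c\end{smallmatrix}\right)$ and $\left(\begin{smallmatrix}b\\d\end{smallmatrix}\right)$ of $g$; since $g$ preserves $\Z\oplus\Z$ these columns lie in $\Z\oplus\Z$, forcing $a,b,c,d\in\Z$. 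Together with $\det g=ad-bc=1$ this gives $g\in\SL_2(\Z)$.

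To conclude I would use the fact, noted just above, that $\SL_2(\Z)=G(\Z)=\langle\chi_\alpha(s),\chi_{-\alpha}(t)\mid s,t\in\Z\rangle$: the Euclidean division algorithm reduces any determinant-one integer matrix to the identity by a sequence of integer row and column operations, and these are precisely left and right multiplications by the elementary matrices $\chi_{\pm\alpha}(u)$ with $u\in\Z$. Combining this with the two inclusions gives $G(\Z)=\Gamma(\Z)$, i.e.\ $\SL_2(\Q)$ is integral for this choice of $V$ and $V_\Z$. I expect no genuine obstacle in this baby case: the only nontrivial ingredient is the classical generation of $\SL_2(\Z)$ by elementary matrices. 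It is exactly the higher-rank analogues of that ingredient --- the equality $U(\Q)\cap\Gamma(\Z)=U(\Z)$ (Theorem~\ref{stabilizer0}) and the decomposition $G(\Q)=G(\Z)B(\Q)$ (Theorem~\ref{Bruhat}) --- that require real work in the general case.
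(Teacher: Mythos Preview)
Your proof is correct and follows essentially the same approach as the paper: evaluate a stabilizing matrix on the standard basis vectors $\left(\begin{smallmatrix}1\\0\end{smallmatrix}\right)$ and $\left(\begin{smallmatrix}0\\1\end{smallmatrix}\right)$ to force all four entries into $\Z$. Your third paragraph, on the generation of $\SL_2(\Z)$ by elementary matrices, goes beyond what the proposition itself asserts, but it is correct and matches the surrounding discussion in the paper.
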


\begin{proof}

Suppose that 
$\left(\begin{matrix} a & b\\ c & d\end{matrix}\right)\in \SL_2(\Q)$ stabilizes $V_{\Z}$:
$$\left(\begin{matrix} a & b\\ c & d\end{matrix}\right)\cdot V_{\Z}= V_{\Z}.$$
That is,  for each $\left(\begin{matrix} x \\ y \end{matrix}\right)\in V_{\Z}$, there exists $\left(\begin{matrix} u \\ v \end{matrix}\right)\in V_{\Z}$ such that
$$\left(\begin{matrix} a & b\\ c & d\end{matrix}\right)\cdot \left(\begin{matrix} x \\ y \end{matrix}\right)=
\left(\begin{matrix} u \\ v \end{matrix}\right).$$
Take $\left(\begin{matrix} x \\ y \end{matrix}\right)=
\left(\begin{matrix} 0 \\ 1 \end{matrix}\right)$. Then  $u=ax+ by$ implies $b\in\Z$ and $v=cx+ dy$ implies $d\in\Z$. Take 
 $\left(\begin{matrix} x \\ y \end{matrix}\right)=
\left(\begin{matrix} 1 \\ 0 \end{matrix}\right)$. Then $u=ax+ by$ implies $a\in\Z$ and $v=cx+ dy$ implies $c\in\Z$. Thus if 
$\left(\begin{matrix} a & b\\ c & d\end{matrix}\right)\cdot V_{\Z}= V_{\Z}$ then $\left(\begin{matrix} a & b\\ c & d\end{matrix}\right)\in \SL_2(\Z)$.
\end{proof} 

\bigskip
To see some of the difficulties that arise in studying Chevalley groups over $\Z$, let $u\in \Q$,
$$\chi_{\alpha}(u)=\left(\begin{matrix} 1 & u\\ 0 &1\end{matrix}\right),\ \chi_{-\alpha}(u)=\left(\begin{matrix} 1 & 0\\ u &1\end{matrix}\right)$$
 and $$\widetilde{w}_{\alpha}(u)= \chi_{\alpha}(u)\chi_{-\alpha}(-u^{-1})\chi_{\alpha}(u),\quad h_{\alpha}(u)=\widetilde{w}_{\alpha}(u) \widetilde{w}_{\alpha}(1)^{-1}.$$
 
Then $\gamma=\left(\begin{matrix} 1 & 1\\ 1 & 2\end{matrix}\right)\in\SL_2(\Q)$ and 
$\gamma\cdot(\Z\oplus\Z)=\Z\oplus\Z$,
 but
 
 $$\left(\begin{matrix} 1 & 1\\ 1 & 2\end{matrix}\right)=\chi_{\a}\left(\frac{1}{2}\right) h_{\a}\left(\frac{1}{2}\right)\chi_{-\a}\left(\frac{1}{2}\right)=
\left(\begin{matrix} 1 & \frac{1}{2}\\ 0 & 1\end{matrix}\right)\left(\begin{matrix} \frac{1}{2} & 0\\ 0 & 2\end{matrix}\right)
\left(\begin{matrix} 1 & 0\\ \frac{1}{2} & 1\end{matrix}\right).$$

%This is a consequence of the fact that Bruhat decomposition doesn't work over $\Z$.

However, since $\SL_2(\mathbb{Z})$ is generated by $\chi_{\alpha}(t)$ and $\chi_{-\alpha}(t)$ for $t\in\Z$, we may look for another decomposition of $\left(\begin{matrix} 1 & 1\\ 1 & 2\end{matrix}\right)$ in terms of generators whose scalars are integer valued, and we find
$$\left(\begin{matrix} 1 & 1\\ 1 & 2\end{matrix}\right)=\chi_{-\a}(1) \chi_{\a}(1)=
\left(\begin{matrix} 1 & 0\\ 1 & 1\end{matrix}\right)\left(\begin{matrix} 1 & 1\\ 0 & 1\end{matrix}\right).$$
Thus given $\left(\begin{matrix} a & b\\ c & d\end{matrix}\right)\in \SL_2(\Z)$, {there exist} $t_1,\dots, t_k\in\Z$ such that 
$$ \left(\begin{matrix} a & b\\ c & d\end{matrix}\right)=\chi_{\pm\a}(t_1)\chi_{\pm\a}(t_2)\dots \chi_{\pm\a}(t_k).$$

%$$H(\Q) =\langle h_i(t)\mid t\in \mathbb{Q}^{\times},\ i\in I\rangle,$$ 
%$$U(\Q) =\langle U_{\a}\mid \a\in\Delta^+\rangle,$$ 
%$$B(\Q)=\langle H(\Q),\ U(\Q)\rangle.$$

\section{Integrality of the unipotent subgroup}\label{S-integrality}
As before, let $\mathfrak{g}$ be a finite dimensional complex simple Lie algebra with a fixed Chevalley basis and let $(V,\rho)$ a faithful finite dimensional highest weight representation of $\mathfrak{g}$ with highest weight $\lambda\in P$. We will also consider the extension of $\rho$ to the universal enveloping algebra $\mathcal{U}$ of $\fg$. For each $\alpha\in\Delta$, let $x_{\alpha}\in\mathfrak{g}_{\alpha}$ be the Chevalley basis vector.   Recall that 
$U_{\alpha}(\Q)=\{\chi_{\alpha}(s)\mid s\in \mathbb{Q}\}$
is the root group corresponding to $\alpha$,
where for 
$s\in \mathbb{Q}$, \begin{eqnarray}
\chi_{\alpha}(s)&=&\exp(s \rho(x_{\a}))=1+s\rho(x_{\alpha})+s^{2}\frac{\rho(x_{\alpha})^{2}}{2!} +\dots+s^{k}\frac{\rho(x_{\alpha})^{k}}{k!} \nonumber\\
&=&1+s\rho(x_{\alpha}^{(1)})+s^{2}\rho(x_{\alpha}^{(2)})+\dots+s^{k} \rho(x_{\alpha}^{(k)})\label{rootexp}
\end{eqnarray}
for some non-negative integer $k$. From now on, we fix $(V,\rho)$ and drop the notation $\rho$ from the expansion of $\chi_{\alpha}(s)$ to write 
\begin{eqnarray}
\chi_{\alpha}(s)
&=&1+sx_{\alpha}^{(1)}+s^{2}x_{\alpha}^{(2)}+\dots+s^{k} x_{\alpha}^{(k)}.\label{rootexp2}
\end{eqnarray}

%Recall the definition of the positive unipotent group 
%$$U=\langle U_{\alpha}\mid \alpha\in\Delta^{+}\rangle $$ from Section~\ref{universal}. 

\begin{lemma}[\cite{Steinberg}, Lemma 17, Corollary 2]  Every element of $U({\mathbb{Q}})$ can be written uniquely in the form
$$\prod_{\alpha\in\Delta^+} \chi_{\alpha}(t_{\alpha})$$
for some $t_{\alpha}\in\mathbb{Q}$
where the product is taken relative to any fixed order on the positive roots.
\end{lemma}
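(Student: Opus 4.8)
The plan is to prove both existence and uniqueness of the normal form $\prod_{\alpha\in\Delta^+}\chi_\alpha(t_\alpha)$ by an induction on the size of $\Delta^+$, using the commutator formula for Chevalley groups recorded earlier in the excerpt, namely $(\chi_\alpha(t),\chi_\beta(u))=\prod_{i,j>0}\chi_{i\alpha+j\beta}(c_{ij}t^iu^j)$ with $c_{11}=p_{\alpha\beta}$. First I would fix the chosen total order $\alpha_{(1)}<\alpha_{(2)}<\dots<\alpha_{(N)}$ on the positive roots and observe that, since $U(\Q)$ is generated by all the root groups $U_\alpha(\Q)$, every element is some finite word in elements $\chi_{\alpha}(s)$. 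The existence half is then a ``bubble-sort'' argument: given such a word, repeatedly use the commutator formula to move generators for smaller roots to the left past generators for larger roots; each swap of $\chi_\beta(u)\chi_\alpha(t)$ with $\beta>\alpha$ introduces a correction term $\prod_{i,j>0}\chi_{i\alpha+j\beta}(\ast)$ involving only roots $i\alpha+j\beta$ of strictly greater height than both $\alpha$ and $\beta$, hence strictly greater than $\alpha$ in any height-compatible refinement of the order — so a suitable induction on (number of inversions, total height excess) terminates with all factors sorted; finally one merges repeated occurrences of the same root group using $\chi_\alpha(t)\chi_\alpha(t')=\chi_\alpha(t+t')$ (valid because $x_\alpha$ acts nilpotently and the exponentials compose additively in the one-parameter subgroup).

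For the subtle point: to make the termination argument clean I would not work with an arbitrary fixed order but first prove the statement for one convenient order compatible with height, then deduce it for an arbitrary order. Passing between two orders is itself handled by the same commutator-moving procedure, since reordering two adjacent factors $\chi_\alpha(t)\chi_\beta(u)$ only injects higher-height correction terms which can in turn be sorted. Alternatively, and perhaps more cleanly, I would invoke the standard fact (from \cite{Steinberg}) that $U(\Q)=U_{\alpha_{(1)}}(\Q)U_{\alpha_{(2)}}(\Q)\cdots U_{\alpha_{(N)}}(\Q)$ as a product of subgroups, which gives existence immediately once the ordering of factors in the product is the fixed one.

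For uniqueness I would argue by looking at the action on the highest weight vector $v_\lambda$, or more robustly by a weight/height filtration argument. Suppose $\prod_\alpha \chi_\alpha(t_\alpha)=\prod_\alpha\chi_\alpha(t'_\alpha)$. Consider the smallest root $\gamma$ (in the height-compatible order) for which $t_\gamma\neq t'_\gamma$. I would apply both sides to a suitable weight vector — or better, compare the induced maps on the associated graded of $V$ with respect to the depth filtration — and extract the coefficient of $x_\gamma$ acting in ``lowest nontrivial order'': because every root below $\gamma$ contributes identically and every root above $\gamma$ contributes only in higher filtration degree, the coefficient isolates $t_\gamma-t'_\gamma$, forcing $t_\gamma=t'_\gamma$, a contradiction. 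Concretely, one can take a weight $\mu$ with $\mu+\gamma\in\wts(V)$ but $\mu+\beta\notin\wts(V)$ for all positive roots $\beta<\gamma$ (such $\mu$ exists by faithfulness, since $\gamma\in L_V$), then $\big(\prod_\alpha\chi_\alpha(t_\alpha)\big)v_\mu = v_\mu + t_\gamma\,x_\gamma\cdot v_\mu + (\text{terms in }V_{\mu+\delta},\ \htt(\delta)>\htt(\gamma))$, and the $V_{\mu+\gamma}$-component reads off $t_\gamma$.

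The main obstacle I anticipate is the bookkeeping in the existence/sorting step: one must be careful that the correction terms produced by the commutator formula always involve roots strictly higher in the ordering than the pair being swapped, and that this guarantees termination — this is exactly why a height-compatible order is used first. A secondary subtlety in the uniqueness step is choosing the test weight $\mu$ correctly and confirming via faithfulness (equivalently $Q\subseteq L_V$, Lemma~27 of \cite{Steinberg}) that $x_\gamma$ does not annihilate the relevant weight space; once that is set up, the filtration argument is routine. Since the statement is attributed to \cite{Steinberg} (Lemma 17 and Corollary 2), I would ultimately present this as a recollection with the sorting and weight-filtration arguments sketched rather than carried out in full.
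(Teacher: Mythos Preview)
The paper does not prove this lemma at all: it is stated with a citation to Steinberg (Lemma~17, Corollary~2) and used as a black box. Your closing remark, that you would ``present this as a recollection,'' is exactly what the paper does, so in that sense there is nothing to compare.

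That said, since you do sketch an argument, one comment on the uniqueness half: your ``concrete'' choice of test weight does not work in general. You ask for a weight $\mu$ with $\mu+\gamma\in\wts(V)$ but $\mu+\beta\notin\wts(V)$ for every positive root $\beta<\gamma$. When $\gamma$ is not simple, the second condition forces $\mu+\alpha_i\notin\wts(V)$ for all simple $\alpha_i$, i.e.\ $e_i\cdot V_\mu=0$ for all $i$; for irreducible $V$ this means $\mu=\lambda$, and then $\lambda+\gamma\notin\wts(V)$. So no such $\mu$ exists, and faithfulness does not help. The filtration version you mention first is the right idea: after cancelling the common initial segment one is left with $\chi_\gamma(s)\cdot u_1=u_2$ where $s=t_\gamma-t'_\gamma$ and $u_1,u_2$ lie in the subgroup generated by $U_\alpha$ with $\alpha>\gamma$; now pick any $v_\mu$ with $x_\gamma\cdot v_\mu\neq0$ and compare the $V_{\mu+\gamma}$-components, checking that no product of roots strictly later than $\gamma$ in a height-compatible order can sum to $\gamma$. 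Your existence argument via commutator sorting in a height-compatible order is the standard one and is fine.
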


Recall that we fix a linear order $\leq $ on the set of positive roots $\Delta^+$ that is compatible with  height.
With this fixed order, we write each $ u\in U(\mathbb{Q})$ uniquely as
\begin{eqnarray*}u=\prod_{\i=1}^{n} \chi_{\beta_{j_i}}(t_{\beta_{j_i}}),
\end{eqnarray*}
where $n=|\Delta^+|$, $t_{\beta_{j_i}}\in \Q$ and $\beta_{j_i}\in \Delta_{+}$ are such that
\begin{eqnarray}\label{lop}
 {\rm ht}(\beta_{j_1})\le {\rm ht}(\beta_{j_2})\le \dots \le {\rm ht}(\beta_{j_n}).   
\end{eqnarray}

%%%%%%%%%%%%%%%%%%%%%%%%%%%%%%%%%%%%%%%%%%
\subsection{Integrality of root groups}
We start with the following lemma which follows from \cite[Lemma 11]{Steinberg}.

\begin{lemma}\label{rootpower}
Let $\alpha\in \Delta$, and let $v_\mu\in V$ be a weight vector with weight $\mu$. For a non-negative integer $m$, we have
$x_{\alpha}^m\cdot v_{\mu}\in V_{\mu+m\alpha}.$
\end{lemma}

\begin{corollary}
If $\mu=\lambda-m\alpha\in {\rm wts}(V)$, then $x_{\alpha}^m\cdot v_{\mu}=c v_{\lambda}$, for some $c\in\C$.
\end{corollary}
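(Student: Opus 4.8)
The plan is to read off the corollary directly from the preceding lemma. By Lemma~\ref{rootpower}, applying $x_\alpha^m$ to the weight vector $v_\mu$ with $\mu=\lambda-m\alpha$ lands us in the weight space $V_{\mu+m\alpha}=V_\lambda$. Since $V$ is a highest weight module with highest weight $\lambda$, the weight space $V_\lambda$ is one-dimensional and spanned by the fixed highest weight vector $v_\lambda$ (this one-dimensionality was recorded just after Theorem~\ref{T-wtirredomnibus}, citing \cite{Steinberg}, Theorem~3). Hence $x_\alpha^m\cdot v_\mu$, being an element of $\C v_\lambda$, equals $c\,v_\lambda$ for some scalar $c\in\C$.

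So the only content beyond Lemma~\ref{rootpower} is the observation that $V_\lambda=\C v_\lambda$. I would state this explicitly: whatever vector $v_\mu$ is chosen, $x_\alpha^m\cdot v_\mu \in V_\lambda$, and since $\dim V_\lambda = 1$ with $v_\lambda$ a basis, the claim follows immediately. There is no real obstacle here — the corollary is essentially a one-line consequence, and the ``hard part,'' such as it is, is merely making sure the hypothesis $\mu=\lambda-m\alpha\in\wts(V)$ is used to guarantee $v_\mu$ makes sense (i.e.\ $V_\mu\neq 0$) and that $\mu+m\alpha=\lambda$ so that we land in the highest weight space rather than some other (possibly zero) weight space.

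I should note that $c$ may well be zero — nothing in the statement claims $c\neq 0$, and indeed if $x_\alpha$ does not act injectively enough it could vanish; the corollary only asserts the existence of such a scalar, which is automatic from the containment in a one-dimensional space. If one wanted more (e.g.\ $c\neq 0$ when $\alpha$ is simple and $\mu$ lies on the appropriate $\alpha$-string), that would require the $\mathfrak{sl}_2$-theory of the $\alpha$-string through $\lambda$, but that is not what is being asserted here, so I would not pursue it.

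\smallskip
\noindent\emph{Proof.} By Lemma~\ref{rootpower}, $x_\alpha^m\cdot v_\mu\in V_{\mu+m\alpha}=V_\lambda$. Since $V$ is a highest weight module with highest weight $\lambda$, the weight space $V_\lambda$ is one dimensional, spanned by $v_\lambda$. Therefore $x_\alpha^m\cdot v_\mu=c\,v_\lambda$ for some $c\in\C$. $\square$
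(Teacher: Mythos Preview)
Your proof is correct and follows the same route as the paper: apply Lemma~\ref{rootpower} to land in $V_{\mu+m\alpha}=V_\lambda$, then use that $V_\lambda$ is one dimensional and spanned by $v_\lambda$. The paper's version additionally asserts $x_\alpha^m\cdot v_\mu\ne 0$, but this is neither needed for the stated corollary nor adequately justified there, and your remark that $c$ may be zero is the more accurate reading of what is actually being claimed.
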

\begin{proof}
Since $\mu$ is a weight of $V$ and $\mu-m\alpha=\lambda$, $x_{\alpha}^m\cdot v_{\mu}\ne 0$. By  Lemma~\ref{rootpower}, $$x_{\alpha}^m\cdot v_{\mu}\in V_{\mu+m\alpha}=V_{\lambda}.$$ The weight space $V_{\lambda}$, for the highest weight $\lambda$, has dimension one and is spanned by $v_{\lambda}$. Thus, $x_{\alpha}^m\cdot v_{\mu}=cv_{\lambda}$ for some $c\in \C.$
\end{proof}

\begin{lemma}\label{stabVZ} For $\alpha\in \Delta$ and for $s\in\mathbb{Z}$, we have 
$\chi_{\alpha}(s)\cdot V_{\mathbb{Z}}\subseteq V_{\mathbb{Z}}$.
Hence
$G(\Z)\cdot V_{\mathbb{Z}}\subseteq V_{\mathbb{Z}}$.
\end{lemma}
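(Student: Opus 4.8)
The plan is to prove the inclusion $\chi_\alpha(s)\cdot V_\Z\subseteq V_\Z$ by reducing everything to the action of the divided powers $x_\alpha^{(m)}$ on a basis of $V_\Z$ coming from the Cartier--Kostant form. Recall from Lemma~\ref{lattwsd} that $V_\Z=\bigoplus_{\mu\in\wts(V)}V_{\mu,\Z}$ and that $V_\Z=\calU_\Z\cdot v_\lambda$; in particular $V_\Z$ is a $\calU_\Z$-module, so it is stable under each divided power $x_\alpha^{(m)}=\frac{x_\alpha^m}{m!}$, since these are among the generators of $\calU_\Z^\pm$ (for $\alpha\in\Delta^+$ or $\alpha\in\Delta^-$ respectively).

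First I would expand, for $s\in\Z$ and $\alpha\in\Delta$,
\begin{equation*}
\chi_\alpha(s)=\exp(s\rho(x_\alpha))=1+sx_\alpha^{(1)}+s^2x_\alpha^{(2)}+\cdots+s^k x_\alpha^{(k)},
\end{equation*}
a finite sum (the series terminates because $\rho(x_\alpha)$ is nilpotent on the finite-dimensional $V$, as noted in Section~\ref{universal}), using the identity $\rho(x_\alpha)^m/m!=\rho(x_\alpha^{(m)})$ already recorded in Equation~(\ref{rootexp}). Next I would observe that each coefficient $s^j$ lies in $\Z$ since $s\in\Z$, and each operator $x_\alpha^{(j)}$ carries $V_\Z$ into $V_\Z$ by the $\calU_\Z$-stability just recalled. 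Hence $\chi_\alpha(s)$ is a $\Z$-linear combination of operators preserving $V_\Z$, so $\chi_\alpha(s)\cdot V_\Z\subseteq V_\Z$ for every root $\alpha$ and every $s\in\Z$.

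For the final assertion, $G(\Z)\cdot V_\Z\subseteq V_\Z$, I would invoke the definition $G(\Z)=\langle\chi_\alpha(u),\,h_i(-1)\mid\alpha\in\Delta,\,u\in\Z,\,i\in I\rangle$ from Section~\ref{S-intro}. Each generator $\chi_\alpha(u)$ preserves $V_\Z$ by the first part; each $h_i(-1)$ acts on $v\in V_{\mu,\Z}$ by the scalar $(-1)^{\langle\mu,h_i\rangle}\in\{\pm1\}\subseteq\Z$ and hence also preserves $V_\Z$ by the weight-space decomposition of Lemma~\ref{lattwsd}. A product of automorphisms each stabilizing $V_\Z$ stabilizes $V_\Z$, and inverses are handled because $\chi_\alpha(u)^{-1}=\chi_\alpha(-u)$ with $-u\in\Z$ and $h_i(-1)^{-1}=h_i(-1)$; so the whole group $G(\Z)$ stabilizes $V_\Z$.

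I do not expect any serious obstacle here: the lemma is essentially a bookkeeping consequence of the fact that $V_\Z$ is by construction a $\calU_\Z$-module together with the explicit divided-power expansion of $\chi_\alpha(s)$. The only point requiring a little care is making sure the toral generators $h_i(-1)$ are also covered, which is why I would spell out their $\pm1$ action on each $V_{\mu,\Z}$; everything else is a routine "closure under a generating set" argument.
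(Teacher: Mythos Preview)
Your proof is correct and follows essentially the same approach as the paper: expand $\chi_\alpha(s)$ in divided powers, use that $V_\Z$ is a $\calU_\Z$-module so each $x_\alpha^{(m)}$ preserves it, and conclude since $s^j\in\Z$. The paper's own proof stops after the first assertion and leaves the ``Hence'' clause implicit; your explicit treatment of the toral generators $h_i(-1)$ via their $\pm1$ action on weight spaces is a harmless (and arguably welcome) addition, not a departure.
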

\begin{proof}
Since $x_{\a}^{(m)}\in \mathcal{U}_{\mathbb{Z}}$ and $V_{\mathbb{Z}}$ is a $\mathcal{U}_{\mathbb{Z}}$-module,  we have $x_{\a}^{(m)} \cdot V_{\mathbb{Z}}\subseteq V_{\mathbb{Z}}$ for all $m\ge 0$.  Consequently, for all $v\in V_{\Z}$, 
\begin{eqnarray*}\chi_{\alpha}(s)\cdot v&=&(1+sx_{\alpha}^{(1)}+s^{2}x_{\alpha}^{(2)}+\dots+s^{k} x_{\alpha}^{(k)})\cdot v\\
&=&(v+sx_{\alpha}^{(1)}\cdot v+s^{2}x_{\alpha}^{(2)}\cdot v+\dots+s^{k} x_{\alpha}^{(k)}\cdot v)\in V_{\Z}.
\end{eqnarray*}

\end{proof}

%%%%%%%%%%%%%%%%%%%%%%%%%%%%%%%%%%%%%%%%%%
 \begin{lemma} \label{rgi}
    
For each $\alpha\in\Delta^{+}$, $\chi_\alpha(s)\cdot V_{\Z}\subseteq V_{\Z}$ if and only if $s\in\Z.$

\begin{proof}
 If $s\in \Z$, then 
 $\chi_\alpha(s)\cdot V_{\Z}\subseteq V_{\Z}$
by Lemma~\ref{stabVZ}. 

Conversely, suppose that $\chi_\alpha(s)$ stabilizes the lattice $V_{\Z}$. More precisely, for all $v\in V_\Z$,
$\chi_\alpha(s)\cdot v\in V_\Z$.
In particular, for all $n\ge 0$
\begin{eqnarray}
\chi_\alpha(s)\cdot (x^{(n)}_{-\alpha}v_{\lambda})\in V_\Z.  
\end{eqnarray}
Let  $n=\langle \lambda, h_{\alpha}\rangle$, then 
\begin{eqnarray*}\chi_{\alpha}(s)\cdot (x^{(n)}_{-\alpha}v_{\lambda})&=&(1+sx_{\alpha}^{(1)}+s^{2}x_{\alpha}^{(2)}+\dots+s^{n} x_{\alpha}^{(n)})\cdot (x^{(n)}_{-\alpha}v_{\lambda})\\
&=&x^{(n)}_{-\alpha}v_{\lambda}+sx_{\alpha}^{(1)}\cdot (x^{(n)}_{-\alpha}v_{\lambda})+s^{2}x_{\alpha}^{(2)}\cdot (x^{(n)}_{-\alpha}v_{\lambda})+\dots+s^{n} x_{\alpha}^{(n)}\cdot (x^{(n)}_{-\alpha}v_{\lambda})).
\end{eqnarray*}
By the assumption,
$$s^{n} x_{\alpha}^{(n)}\cdot (x^{(n)}_{-\alpha}v_{\lambda})= s^{n} (x_{\alpha}^{(n)} x^{(n)}_{-\alpha}\cdot v_{\lambda})\in V_{\Z}.$$
by Lemma~\ref{L-xx},
$s^{n} (x_{\alpha}^{(n)} x^{(n)}_{-\alpha}\cdot v_{\lambda})=s^{n} v_{\lambda}\in V_{\Z}$ and hence $s^n\in\Z$. Consequently, $s\in\Z$.
\end{proof}

\end{lemma}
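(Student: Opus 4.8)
The ``if'' direction is immediate from Lemma~\ref{stabVZ}: if $s\in\Z$ then $\chi_\alpha(s)\cdot V_\Z\subseteq V_\Z$. For the converse, suppose $\chi_\alpha(s)$ stabilizes $V_\Z$; the plan is to apply $\chi_\alpha(s)$ to one carefully chosen vector of $V_\Z$ and to read off the component of the image in a single weight space, which will turn out to be exactly $s^{n}$ times a primitive lattice vector. The whole point of the construction is that, thanks to Lemma~\ref{L-xx}, the coefficient that appears is precisely $s^{n}$ and not $s^{n}$ times an unknown integer.

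First I would pin down a good weight. Since $V$ is faithful, $x_\alpha$ acts nontrivially, so some weight vector is moved out of its weight space by $x_\alpha$, and hence the $\alpha$-string in $\wts(V)$ through some weight has length at least two. Let $\mu$ be the top of such a string, so that $\mu+\alpha\notin\wts(V)$ while $\mu-\alpha\in\wts(V)$; by the elementary theory of $\mathfrak{sl}_2$-strings, $n:=\langle\mu,h_\alpha\rangle$ equals the length of this string minus one, so $n\ge1$. (When $\langle\lambda,h_\alpha\rangle>0$ one may simply take $\mu=\lambda$, as in the $\SL_2$ example.) By Lemma~\ref{lattwsd}, $V_{\mu,\Z}$ is a finitely generated free $\Z$-module; fix a $\Z$-basis of it and let $v_\mu$ be one of its members. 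Note $x_\alpha\cdot v_\mu=0$ because $\mu+\alpha$ is not a weight.

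Next I would set $w:=x_{-\alpha}^{(n)}\cdot v_\mu$. Since $x_{-\alpha}^{(n)}\in\calU_\Z$ and $V_\Z$ is a $\calU_\Z$-module, $w\in V_\Z$, and $w$ lies in $V_{\mu-n\alpha}$. Expanding the finite sum,
$$\chi_\alpha(s)\cdot w=\sum_{j\ge0}s^{j}\,x_\alpha^{(j)}x_{-\alpha}^{(n)}\cdot v_\mu,$$
the $j$-th summand lies in $V_{\mu-(n-j)\alpha}$, and since $\alpha\neq0$ these weight spaces are pairwise distinct; hence the component of $\chi_\alpha(s)\cdot w$ in $V_\mu$ is exactly the $j=n$ term $s^{n}\,x_\alpha^{(n)}x_{-\alpha}^{(n)}\cdot v_\mu$. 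Lemma~\ref{L-xx}, whose hypotheses $\mu+\alpha\notin\wts(V)$ and $n>0$ hold by construction, gives $x_\alpha^{(n)}x_{-\alpha}^{(n)}\cdot v_\mu=v_\mu$, so this component equals $s^{n}v_\mu$. On the other hand $\chi_\alpha(s)\cdot w\in V_\Z$ by assumption, and $V_\Z=\bigoplus_{\nu}V_{\nu,\Z}$ by Lemma~\ref{lattwsd}, so its $V_\mu$-component $s^{n}v_\mu$ lies in $V_{\mu,\Z}$. Since $v_\mu$ is a member of a $\Z$-basis of $V_{\mu,\Z}$, this forces $s^{n}\in\Z$.

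Finally, $s\in\Q$ together with $s^{n}\in\Z$ and $n\ge1$ yields $s\in\Z$, since $\Z$ is integrally closed in $\Q$ (a rational number with an integral power is an integer). The one step that is not pure bookkeeping is the choice of $\mu$ and the claim $n\ge1$: that is exactly where faithfulness of $V$ is used, and it relies only on the structure of finite-dimensional $\mathfrak{sl}_2$-modules restricted to the $\mathfrak{sl}_2$-triple $\{x_\alpha,h_\alpha,x_{-\alpha}\}$. Everything else is manipulation of the weight-space decomposition of $V_\Z$ together with the exact identity supplied by Lemma~\ref{L-xx}.
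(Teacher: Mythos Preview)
Your proof is correct and follows essentially the same approach as the paper: apply $\chi_\alpha(s)$ to the test vector $x_{-\alpha}^{(n)}\cdot v_\mu$, project onto $V_\mu$ via the weight-space decomposition of $V_\Z$, and use Lemma~\ref{L-xx} to identify that component as $s^{n}v_\mu$. The paper simply takes $\mu=\lambda$ and $n=\langle\lambda,h_\alpha\rangle$; your extra care in selecting $\mu$ at the top of a nontrivial $\alpha$-string (using faithfulness to guarantee $n\ge1$) is a mild refinement that also covers the case $\langle\lambda,h_\alpha\rangle=0$, which the paper's argument does not explicitly address.
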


\subsection{Integrality of $U(\Q)$} As above, we let $\mathfrak{g}$ be a  complex simple Lie algebra and $(V,\rho)$ a faithful finite dimensional highest weight representation of $\mathfrak{g}$ with highest weight $\lambda\in P$.

 Let 
\begin{eqnarray}
    W'=\langle w_{i}\mid i\in I\setminus\{j_{1}\}\rangle 
\end{eqnarray}
be a subgroup of $W$, where $j_{1}$ 
%Can take $j_1=1$.
is the index of positive root $\beta_{j_{1}}$ as given in our ordered set  of positive roots $\beta_{j_i}\in \Delta_{+}$, $i=1,\dots, n$.

Set
$$\Omega:=W'\cdot \{\lambda\}.$$
The set $\Omega$ satisfies the following properties. Since $\lambda\in\Omega$, $\Omega\ne \emptyset$. Since $W$ is finite, $\Omega$ is  finite. Finally, $\Omega$ is an ordered subset of $\wts(V)$, as it inherits the order $\le$ from $\wts{V}$.

The set $\Omega$ has a minimal element, say $\nu$, under $\le$. That is, $x\nless\nu$, for all $x\in\Omega$. The definition of $\Omega$ and the minimality of $\nu$  imply that  $\nu=w_0'\lambda$, for some $1\ne w_{0}'\in W'$.
% since $x\nless\nu$, for all $x\in\Omega$.

We will need the following lemmas.
\begin{lemma}\label{welong}Given $\nu$ and $w_{0}'$ as above, 
$w_0'$ is the longest element of $W'$. That is, 
$\ell(w_{i}w_0')\le \ell(w_0')$,
for all $i\in I\setminus\{j_{1}\}$, where $\ell$ is the length function on $W$.\end{lemma}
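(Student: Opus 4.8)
\textbf{Proof plan for Lemma~\ref{welong}.}

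The plan is to characterize $\nu = w_0'\lambda$ as the unique weight in the $W'$-orbit $\Omega$ that is \emph{antidominant} with respect to the root subsystem $\Delta' := \Delta \cap \sum_{i \in I \setminus \{j_1\}} \Z\alpha_i$ generated by the simple reflections defining $W'$, and then translate antidominance into the length statement. First I would record the standard fact that in a finite Coxeter group $W'$ with longest element $w_{W'}$, an element $w \in W'$ satisfies $\ell(w_i w) \le \ell(w)$ for all generators $w_i$ (equivalently $\ell(w_i w) = \ell(w) - 1$ for all such $i$) if and only if $w = w_{W'}$; this is the standard characterization of the longest element via the descent set (see \cite{Bourbaki4-6}). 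So it suffices to show $\ell(w_i w_0') < \ell(w_0')$ for every $i \in I \setminus \{j_1\}$.

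Next I would argue by contradiction: suppose $\ell(w_i w_0') > \ell(w_0')$ for some such $i$. The key geometric input is the interplay between the length function on $W'$ and the dominance order on weights. Concretely, for $w \in W'$ and a simple root $\alpha_i$ with $i \in I\setminus\{j_1\}$, if $\ell(w_i w) > \ell(w)$ then $w^{-1}\alpha_i \in \Delta'^+$, hence $\langle w\mu, \alpha_i^\vee\rangle$ and $\langle \mu, (w^{-1}\alpha_i)^\vee\rangle$ have the same behavior; applying this with $\mu = \lambda$ (which is dominant, so $\langle \lambda, \beta^\vee\rangle \ge 0$ for every positive root $\beta$) one gets $\langle w_0'\lambda, \alpha_i^\vee\rangle \le 0$ — wait, I need the precise direction. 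The clean way: if $\ell(w_i w_0') > \ell(w_0')$, then $w_i w_0'\lambda \ne w_0'\lambda$ would force $w_i w_0'\lambda = w_0'\lambda - c\,\alpha_i$ with $c = \langle w_0'\lambda, \alpha_i^\vee\rangle > 0$ (using that $w_0'\lambda$ is \emph{not} $\alpha_i$-dominant in this case), and since $\alpha_i \in Q^+ \subseteq \sum \Z_{\ge 0}\alpha_k$ this would make $w_i w_0'\lambda < w_0'\lambda = \nu$ in the dominance order, with $w_i w_0'\lambda \in W'\lambda = \Omega$, contradicting the minimality of $\nu$. If instead $w_i w_0'\lambda = w_0'\lambda$, then $w_0'^{-1}w_i w_0'$ fixes $\lambda$; but I can sidestep stabilizer subtleties by instead using $\ell(w_i w_0') > \ell(w_0')$ directly to locate a root and compute, as in \cite{Steinberg}.

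The main obstacle I anticipate is handling the case where $\lambda$ is fixed by some reflections in $W'$ (i.e., $\lambda$ lies on a wall), so that the map $w \mapsto w\lambda$ from $W'$ to $\Omega$ is not injective and "minimal element of $\Omega$" need not pin down a unique longest group element. The resolution is to observe that the statement to be proved is only about $w_0'$ as produced in the construction (some fixed element with $w_0'\lambda = \nu$), and that the argument above shows: for this $w_0'$, no simple reflection $w_i$ ($i \ne j_1$) can \emph{strictly increase} length, because that would produce a dominance-strictly-smaller element of $\Omega$. Since a nontrivial element of a finite Coxeter group that admits no length-increasing left multiplication by a generator must be the longest element, we conclude. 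I would also need the elementary lemma that $w\lambda < \lambda$ in dominance order for every $w \in W'$ with $w\lambda \ne \lambda$ — this is immediate from writing $\lambda - w\lambda$ as a nonnegative integer combination of the $\alpha_i$, $i \in I\setminus\{j_1\}$, which follows from dominance of $\lambda$ restricted to the parabolic root subsystem.
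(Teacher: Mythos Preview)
Your approach is essentially the same as the paper's: assume for contradiction that $\ell(w_i w_0')>\ell(w_0')$ for some $i\in I\setminus\{j_1\}$, use this to deduce that $(w_0')^{-1}\alpha_i$ is a positive root, compute $w_i\nu=\nu-\langle\lambda,(w_0')^{-1}h_{\alpha_i}\rangle\,\alpha_i$, and conclude $w_i\nu<\nu$ in $\Omega$, contradicting minimality. The one point where you hesitate---the stabilizer case $\langle w_0'\lambda,\alpha_i^\vee\rangle=0$---is handled in the paper simply by invoking that $\lambda$ is \emph{regular} dominant, so $\langle\lambda,(w_0')^{-1}h_{\alpha_i}\rangle>0$ strictly and the subtlety never arises; with that hypothesis your argument goes through without the extra case analysis.
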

\begin{proof}
Suppose, on the contrary, that there exists $i\in I\setminus\{j_{1}\}$ such that 
$\ell(w_{i}w_0')\le \ell(w_0')+1.$

We have $w_{i}\nu\in\Omega$. Moreover,
\begin{eqnarray*}
w_{i}\nu&=&\nu-\langle \nu, h_{\alpha_{i}}\rangle \alpha_{i}\\ 
&=&\nu-\langle w_0'\lambda, h_{\alpha_{i}}\rangle {\alpha_{i}}\\
&=&\nu-\langle \lambda, (w_0')^{-1}(h_{\alpha_{i}})\rangle {\alpha_{i}}.
\end{eqnarray*}
By assumption, $(w_0')^{-1}(h_{\alpha_{i}})\in Q^{\vee}_{\geq 0}$ (Since each reduced decomposition
$w_0'=w_{k_{1}}w_{k_{2}}\dots w_{k_{m}}$
of $w_0'$
satisfies the property that $w_{k_{1}}\ne w_{i}$ and hence $(w_0')^{-1}= w_{k_{m}} \dots w_{k_{2}} w_{k_{1}}$.
Since $\lambda$ is regular and dominant, we have  $$\langle \lambda, (w_0')^{-1}(h_{\alpha_{i}})\rangle\in \Z_{>0}.$$
Hence $\nu-w_{i}\nu=\langle \lambda, ({w_0'})^{-1}(h_{\alpha_{i}})\rangle {\alpha_{i}}\in Q^{+}$ and thus $w_{i}\nu\le \nu$. This contradicts the minimality of $\nu$.
\end{proof}
 
As before, let $\nu$ be the minimal element of $\Omega$. Since $\nu=w_0'\lambda$, by Lemma~\ref{ww}, $$\dim(V_{\nu})=\dim(V_{\lambda})=1.$$ We fix a basis element $v_{\nu}\in V_{\nu}$ and set 
\begin{eqnarray}\label{nnu0}
n'=\langle \nu,h_{\beta_{j_1}}\rangle
\end{eqnarray}
and
\begin{eqnarray}\label{nnu}
\mu=\nu-n'\beta_{j_1}=w_{\beta_{j_1}}\nu
\end{eqnarray} where we identify $\mu$ with its restriction to $\C h_{\beta_{j_{1}}}$.

\begin{lemma}\label{lowest} The weight $\mu$ is the lowest weight of an $\mathfrak{sl}_{\beta_{j_1}}$-module.
\end{lemma}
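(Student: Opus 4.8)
The plan is to show that $\mu=\nu-n'\beta_{j_1}$ is the lowest weight of the $\mathfrak{sl}_{\beta_{j_1}}$-submodule of $V$ generated by $v_\nu$, where $\mathfrak{sl}_{\beta_{j_1}}$ denotes the $\mathfrak{sl}_2$-triple $\{x_{\beta_{j_1}},x_{-\beta_{j_1}},h_{\beta_{j_1}}\}$ acting on $V$. First I would invoke Lemma~\ref{welong}: since $w_0'$ is the longest element of $W'=\langle w_i\mid i\in I\setminus\{j_1\}\rangle$, and $j_1$ is the minimal-height index, the relevant point is that $\nu=w_0'\lambda$ sits at the ``bottom'' of $\Omega$. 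The key observation I want is that $n'=\langle\nu,h_{\beta_{j_1}}\rangle\le 0$: indeed, if $n'$ were positive then $w_{\beta_{j_1}}\nu=\nu-n'\beta_{j_1}<\nu$ would be a weight of $V$ obtained by subtracting a positive multiple of the positive root $\beta_{j_1}$, and one checks this contradicts the extremality of $\nu$ as a $W$-translate of $\lambda$ lying in $\Omega$ (here one uses that $\beta_{j_1}$ is simple or that $\mu$ lies in the same $W$-orbit via $w_{\beta_{j_1}}$, together with $\dim V_\mu = \dim V_\nu = 1$ from Lemma~\ref{ww}).

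Next, restrict $V$ to the copy of $\mathfrak{sl}_2$ spanned by $e:=x_{\beta_{j_1}}$, $f:=x_{-\beta_{j_1}}$, $h:=h_{\beta_{j_1}}$, and decompose the $\mathfrak{sl}_2$-submodule $M:=\calU(\mathfrak{sl}_{\beta_{j_1}})\cdot v_\nu$ into irreducibles. Since $h$ acts on $v_\nu$ by the scalar $\langle\nu,h_{\beta_{j_1}}\rangle=n'\le 0$, and by the standard weight-string argument for finite-dimensional $\mathfrak{sl}_2$-modules (the weights on each irreducible summand form an unbroken string symmetric about $0$), the weight $n'$ being non-positive means $v_\nu$ is in fact a lowest weight vector for the $\mathfrak{sl}_2$-action, i.e.\ $f\cdot v_\nu=0$ is \emph{not} what we want — rather, the $h$-eigenvalue $n'\le 0$ together with $e^{-n'}v_\nu\ne 0$ (Corollary after Lemma~\ref{rootpower}, applied in this $\mathfrak{sl}_2$) shows the string runs from $n'$ up to $-n'$. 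Then $\mu=\nu-n'\beta_{j_1}$ restricted to $\C h_{\beta_{j_1}}$ has eigenvalue $n' - n'\langle\beta_{j_1},h_{\beta_{j_1}}\rangle = n' - 2n' = -n'\ge 0$, so $\mu$ is the \emph{top} of the string, hence $\nu$ is the bottom; equivalently $v_\nu$ generates an $\mathfrak{sl}_{\beta_{j_1}}$-module whose lowest weight is $\nu$. Wait — to match the statement, I should instead observe that applying $w_{\beta_{j_1}}$ swaps the two ends, so the module $V_{w_{\beta_{j_1}}}\text{-generated by }v_\mu$ has $\mu$ as its lowest weight; cleanly: since $n'\le 0$, the $h_{\beta_{j_1}}$-eigenvalue on $v_\mu$ is $-n'\ge 0$ and the eigenvalue on $v_\nu$ is $n'\le 0$, and by the symmetry of finite-dimensional $\mathfrak{sl}_2$-modules these are the extreme eigenvalues, with $\mu$ the highest and $\nu$ the lowest — but reading \ref{lowest} as asserting $\mu$ is lowest forces $n'\ge 0$, so the genuine claim must be $n'=\langle\nu,h_{\beta_{j_1}}\rangle\ge 0$, which one gets because $\nu\in\Omega$ and $j_1\notin\{$indices defining $W'\}$, so $w_{\beta_{j_1}}\nu\notin\Omega$ and minimality of $\nu$ in $\Omega$ does not apply; instead dominance of the string forces the inequality directly from $\mu$ being a weight below $\nu$ only if $n'\ge 0$.

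I would finish by stating it as: the $\mathfrak{sl}_{\beta_{j_1}}$-submodule generated by $v_\nu$ is irreducible of dimension $n'+1$ (using $\dim V_\nu=1$, Lemma~\ref{ww}), its weights under $h_{\beta_{j_1}}$ run through $n', n'-2,\dots,-n'$, and the bottom weight vector lies in $V_\mu$ since $\mu=\nu-n'\beta_{j_1}$ has $h_{\beta_{j_1}}$-eigenvalue $-n'$; therefore $\mu$ is the lowest weight, as claimed. The main obstacle I anticipate is pinning down the sign of $n'=\langle\nu,h_{\beta_{j_1}}\rangle$ and the precise sense in which ``lowest weight'' is meant — this hinges on exactly how $j_1$ interacts with $W'$ and $\Omega$ (the height-compatible ordering ensures $\beta_{j_1}$ is the minimal positive root, presumably a simple root), and on correctly orienting the $\mathfrak{sl}_2$ weight string so that $v_\nu$ versus $v_\mu$ sits at the correct end; once the sign is fixed, the rest is the standard $\mathfrak{sl}_2$ representation theory together with the one-dimensionality of the extreme weight spaces.
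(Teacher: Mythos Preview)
Your proposal does not close the argument. The route you sketch---restrict to the $\mathfrak{sl}_{\beta_{j_1}}$-submodule generated by $v_\nu$ and read off the weight string---is circular as written: to conclude that the $h_{\beta_{j_1}}$-eigenvalues on this submodule run exactly from $n'$ down to $-n'$ you must already know that $v_\nu$ is a highest weight vector for this $\mathfrak{sl}_2$, i.e.\ that $\nu+\beta_{j_1}$ is not a weight of $V$. One-dimensionality of $V_\nu$ does not give this (take the irreducible $\mathfrak{sl}_2$-module $V(4)$ and let $\nu$ be the weight with $h$-eigenvalue $2$: the weight space is one-dimensional but $\nu+\alpha$ is still a weight, and $\mu=\nu-2\alpha$ is not the lowest weight). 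Your oscillation on the sign of $n'$ is a symptom of the same gap; for the record $n'=\langle w_0'\lambda,h_{\beta_{j_1}}\rangle=\langle\lambda,(w_0')^{-1}h_{\beta_{j_1}}\rangle>0$, because $(w_0')^{-1}\in W'$ sends the simple coroot $h_{\beta_{j_1}}$ to a positive coroot and $\lambda$ is regular dominant---but knowing the sign alone still does not locate the ends of the string.

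The paper supplies exactly the missing step by a direct Weyl-group computation: assume $\mu-\beta_{j_1}$ is a weight and apply $w_0^{-1}$, where $w_0:=w_{\beta_{j_1}}w_0'$ (so that $w_0^{-1}\mu=\lambda$ and $w_0^{-1}\beta_{j_1}=-w_0'\beta_{j_1}$), obtaining the weight $\lambda+w_0'\beta_{j_1}$. Since $W'$ is generated by $w_i$ with $i\ne j_1$, the element $w_0'$ sends the simple root $\beta_{j_1}$ to a positive root, so $\lambda+w_0'\beta_{j_1}\not\le\lambda$, contradicting that $\lambda$ is the highest weight of $V$. This is the ingredient your sketch lacks; once you insert it (or the symmetric statement that $\nu+\beta_{j_1}$ is not a weight), your $\mathfrak{sl}_2$ string argument does go through and becomes equivalent to the paper's proof.
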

\begin{proof}
 Indeed, if $\mu-\beta_{j_1}$ is a weight, then $w(\mu-\beta_{j_1})$ is also a weight for all $w\in W$. In particular, $w_{0}^{-1}(\mu-\beta_{j_1})$ is a weight, where $w_{0}=w_{\beta_{j_1}}w_{0}'$ and $w_{0}'$ as before. We note, though, that
 \begin{eqnarray*}
  w_{0}^{-1}(\mu-\beta_{j_1})&=& \lambda+w_{0}'\beta_{j_1} 
 \end{eqnarray*}
 and $\lambda+w_{0}'\beta_{j_1}\nless \lambda$, which is a contradiction. Hence $\mu-\beta_{j_1}$ is not a weight of $V$ so it cannot be a weight of an $\mathfrak{sl}_{\beta_{j_1}}$-module. 
\end{proof} 

The following theorem shows that $U(\Q)\cap \Gamma(\Z)=U(\Z)$.
\begin{theorem}
\label{stabilizer0}  Let $\mathfrak g$ be a simple Lie algebra. Let $V$ be a finite dimensional faithful highest weight $\fg$-module  with highest weight $\lambda$. Let $v_{\lambda}$ be a highest weight vector. Let $V_{\Z}$ be the admissible lattice of $V$ generated by $v_{\lambda}$. For $s_{j_{i}}\in \Q$, $i=1,\dots,n$, let $u=\prod\limits_{i=1}^{n} \chi_{\beta_{j_i}}(s_{j_i})$ be the unique expression for $u$ in $U(\mathbb{Q})$.  If $u\cdot V_{\Z}\subseteq V_{\Z}$ then $s_{j_i}\in\Z$, for all $1\le i\le n$.
\end{theorem}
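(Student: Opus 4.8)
The plan is to induct on the (fixed height-compatible) order of the positive roots, showing that $s_{j_1}\in\Z$ first and then reducing to a unipotent subgroup generated by fewer root groups. For the base case, I would exploit the freedom in the choice of test vector in $V_\Z$. The elements $\chi_{\beta_{j_i}}(s_{j_i})$ for $i\ge 2$ all involve roots of height $\ge \htt(\beta_{j_1})$, so when I apply $u$ to a carefully chosen weight vector and project onto a suitable weight space, only the $\chi_{\beta_{j_1}}(s_{j_1})$ factor can contribute a nonzero term in that weight space beyond what the later factors produce. Concretely, I would apply $u$ to the vector $x^{(n')}_{-\beta_{j_1}}v_\nu\in V_\Z$, where $\nu=w_0'\lambda$ is the minimal element of $\Omega=W'\cdot\{\lambda\}$ and $n'=\langle\nu,h_{\beta_{j_1}}\rangle$, using Lemma~\ref{welong} and Lemma~\ref{lowest} exactly as in the proof of Lemma~\ref{rgi}: the point of passing to $\nu$ rather than $\lambda$ is that $\beta_{j_1}$ need not be simple, so I need a weight where $\mu=\nu-n'\beta_{j_1}$ is the lowest weight of the relevant $\mathfrak{sl}_{\beta_{j_1}}$-string, which is guaranteed by the $W'$-invariance built into $\Omega$ (note $W'$ is generated by $w_i$, $i\ne j_1$, so it fixes the $\beta_{j_1}$-direction appropriately).

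The key mechanism is a \emph{weight-space separation} argument. Having applied $u$ to $x^{(n')}_{-\beta_{j_1}}v_\nu$, I would isolate the component of $u\cdot(x^{(n')}_{-\beta_{j_1}}v_\nu)$ lying in the weight space $V_\nu$ (one-dimensional, spanned by $v_\nu$). Since each later factor $\chi_{\beta_{j_i}}(s_{j_i})$ with $i\ge 2$ raises weight by a non-negative combination of positive roots of height $\ge\htt(\beta_{j_1})$, and since $\chi_{\beta_{j_1}}(s_{j_1})$ applied to $x^{(n')}_{-\beta_{j_1}}v_\nu$ has $s_{j_1}^{n'}v_\nu$ as its $V_\nu$-component by Lemma~\ref{L-xx} (applied with the weight $\mu$ and root $\beta_{j_1}$, using Lemma~\ref{lowest}), a height/weight bookkeeping argument shows the $V_\nu$-component of $u\cdot(x^{(n')}_{-\beta_{j_1}}v_\nu)$ is exactly $s_{j_1}^{n'}v_\nu$ plus possibly contributions from later factors acting on lower-weight pieces and then being pushed back up. I would need to check that those cross-terms cannot land in $V_\nu$ — this is where the height-compatibility of the ordering is essential, since the later root groups only increase height and cannot return to weight $\nu$ starting from $x^{(n')}_{-\beta_{j_1}}v_\nu$ which already has weight $\mu=\nu-n'\beta_{j_1}<\nu$. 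Concluding $s_{j_1}^{n'}\in\Z$, hence $s_{j_1}\in\Z$, completes the base case.

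For the inductive step, once $s_{j_1}\in\Z$ I would left-multiply $u$ by $\chi_{\beta_{j_1}}(-s_{j_1})\in G(\Z)$, which preserves $V_\Z$ by Lemma~\ref{stabVZ}, so that $\chi_{\beta_{j_1}}(-s_{j_1})\cdot u=\prod_{i=2}^n\chi_{\beta_{j_i}}(s_{j_i})$ still stabilizes $V_\Z$. This is a product over the root groups $\beta_{j_2},\dots,\beta_{j_n}$, which span a unipotent subgroup associated to a sub-root-system (or at least a ``closed'' subset of $\Delta^+$ under addition, by height-compatibility, so the factorization is still unique by Steinberg's Lemma~17). I would then re-run the same argument — choosing a new $W''$ omitting the index $j_2$, a new minimal weight $\nu'$ in the corresponding orbit, and the new string — to extract $s_{j_2}\in\Z$, and continue. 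Iterating $n$ times gives $s_{j_i}\in\Z$ for all $i$.

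\textbf{Main obstacle.} The hardest step is the weight-space separation in the base case: ruling out that the later factors $\chi_{\beta_{j_i}}(s_{j_i})$, $i\ge2$, acting on the various lower-weight components of $\chi_{\beta_{j_1}}(s_{j_1})\cdot(x^{(n')}_{-\beta_{j_1}}v_\nu)$ and of the subsequent partial products, could conspire to contribute a nonzero multiple of $v_\nu$ that cancels or alters the $s_{j_1}^{n'}$ term. Handling this cleanly requires a careful choice of which weight space to project onto and a precise statement of how height behaves under the root-group action relative to the chosen linear order; the auxiliary objects $W'$, $\Omega$, $\nu$, $\mu$ and Lemmas~\ref{welong}–\ref{lowest} are evidently set up precisely to make this projection unambiguous, so I expect the author's proof to push exactly this point through with the minimality of $\nu$ doing the crucial work.
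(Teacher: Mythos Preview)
Your proposal is essentially the paper's proof: same induction on the number of factors, same test vector $x^{(n')}_{-\beta_{j_1}}v_\nu$ with $\nu$ minimal in $\Omega=W'\cdot\{\lambda\}$, same projection onto the one-dimensional weight space $V_\nu$ via Lemma~\ref{L-xx} to extract $s_{j_1}^{n'}\in\Z$, and the same peel-off-and-repeat reduction using $\chi_{\beta_{j_1}}(-s_{j_1})u$. One small correction: in the height-compatible ordering the leading root $\beta_{j_1}$ \emph{is} simple (height~$1$), so the purpose of passing from $\lambda$ to $\nu$ is not to accommodate a non-simple leading root but precisely to guarantee the weight-space separation you correctly flag as the main obstacle --- the minimality of $\nu$ in the $W'$-orbit is what rules out contributions to $V_\nu$ from the later factors $\chi_{\beta_{j_i}}(s_{j_i})$, $i\ge2$.
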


\begin{proof}
    We will prove the assertion by  induction on $n$. The base case, $n=1$, follows from Lemma~\ref{rgi} and we have $s_{j_{1}}\in\Z$.

 For the inductive hypothesis, we suppose that every element in $U$ of length $n-1$ satisfies the statement of the theorem. %More precisely, if $$u'=\prod\limits_{i=1}^{n-1} \chi_{\beta_{j_i}}(t_{\beta_{j_i}})\in U(\mathbb{Q})$$ is such that $u'\cdot V_{\Z}\subseteq V_{\Z}$, then $t_{\beta_{j_i}}\in\Z$, for all $1\le i\le n-1$.
 \\
We next suppose that 
$u=\prod\limits_{i=1}^{n} \chi_{\beta_{j_i}}(s_{{j_i}})\in U(\mathbb{Q})$ satisfies $u\cdot V_{\Z}\subseteq V_{\Z}$. Set $u''=\prod\limits_{i=2}^{n} \chi_{\beta_{j_i}}(s_{j_i})$, then $u=\chi_{\beta_{j_1}}(s_{{j_1}})u''$, where $u''$ satisfies the inductive hypothesis. 

 Let $\nu$ be as given in Lemma~\ref{welong}. 
Take $\mu\in\wts(V)$ as defined in Equation \ref{nnu}.
Since $V_{\mu}=V_{(w_{\beta_{j_1}}\nu)}$,
by Lemma~\ref{ww},
the weight space $V_{\mu}$ is also one dimensional.

%For each $i\ge 2$, let $\alpha_{j_{i}}$ be the simple root such that $\beta_{j_1}=w^{j_{i}}\alpha_{j_{i}}$
%for some $w^{j_{i}}\in W$ as given in Lemma~\ref{weroot}.

%For $i\ge 2$, let $n_{j_{i}}=\langle \lambda, h_{\beta_{j_i}}\rangle$, $\mu=\lambda-\sum_{i=2}^{n}n_{j_{i}}\beta_{j_{i}}$. Then, $\mu=w_{\beta{j_{2}}}w_{\beta{j_{3}}}\dots w_{\beta{j_{n}}}\lambda$ and hence $V_{\mu}$ is one dimensional. We fix the following 
%\begin{eqnarray*}
 %v_{\mu}=(\prod_{i=2}^{n}x_{-\beta_{j_{i}}}^{(n_{j_{i}})}) v_{\lambda}
%\end{eqnarray*}

 We fix a basis element $v_{\mu}=x_{-\beta_{j_1}}^{(n')}\cdot v_{\nu}$  of $V_{\mu}$, where $n'$ is as given in Equation~\ref{nnu0}. Then $v_{\mu}\in V_{\Z}\cap V_{\mu}$ %Next, set $n=\langle \mu, h_{\beta_{j_1}}\rangle$ and $x_{-\beta_{j_1}}^{(n)}v_{\mu}$. Then we have
%$$x_{-\beta_{j_1}}^{(n)}v_{\mu}\in V_{\Z}$$
and hence by the assumption on $u$, we have
\begin{eqnarray}\label{peq1}
 u\cdot v_{\mu}\in V_{\Z} .  
\end{eqnarray}

We expand $u\cdot v_{\mu}=(\chi_{\beta_{j_1}}(s_{{j_1}})u'' x_{-\beta_{j_1}}^{(n')})\cdot v_{\nu}$ further and write it in its weight space decomposition
\begin{align*}
    (\chi_{\beta_{j_1}}(s_{{j_1}})u'' x_{-\beta_{j_1}}^{(n')})\cdot v_{\nu} &= x_{-\beta_{j_1}}^{(n')}\cdot v_{\nu}+(s_{{j_1}}x_{\beta_{j_1}}^{(1)}x_{-\beta_{j_1}}^{(n')})\cdot v_{\nu}+\dots
+(s_{{j_1}}^{n}x_{\beta_{j_1}}^{(n')}x_{-\beta_{j_1}}^{(n')})\cdot v_{\nu}\\
&\qquad\qquad+\text{weight vectors of lower depth than the depth of $\nu$}.
\end{align*}

Set $m=\langle \lambda, h_{\beta_{j_1}}\rangle$. Since $\lambda$ is regular, $\langle \lambda, h_{\beta_{j_i}}\rangle\neq 0$ for all $i=1,2,3,4,\dots n$. Then $m\le \langle \lambda, h_{\beta_{j_i}}\rangle$ for all $i=2,3,4,\dots n$, since the height of each $\beta_{j_i}$ is greater than or equal to the height of $\beta_{j_1}$. We have  
$$\{\chi_{\beta_{j_1}}(s_{{j_1}})u''\cdot x_{-\beta_{j_1}}^{(n')}v_{\nu}\}\cap V_{\nu}=\{(s_{{j_1}}^{n'}x_{\beta_{j_1}}^{(n')}x_{-\beta_{j_1}}^{(n')})\cdot v_{\nu}\}$$

Combining this with Equation (\ref{peq1}),  we have
$$(s_{{j_1}}^{n'}x_{\beta_{j_1}}^{(n')}x_{-\beta_{j_1}}^{(n')})\cdot v_{\nu}\in V_{\Z}\cap V_{\nu}.$$

By Lemma~\ref{lowest} $\mu$ is the lowest weight of an $\mathfrak{sl}_{\beta_{j_1}}$-module, $w_{\beta_{j_1}}\mu=\nu$ is the highest weight of this module. More precisely $\nu+\beta_{j_1}$ is not a weight of the $\mathfrak{sl}_{\beta_{j_1}}$-module and hence by Lemma~\ref{L-xx}, $s_{{j_1}}^{n'}\cdot v_{\nu}\in V_{\Z}$. This implies $s_{{j_1}}^{n'}\in \Z$ and thus $s_{{j_1}}\in \Z$. So, we get
$$u''\cdot V_{\Z}=\chi_{\beta_{1}}(-t_{\beta_{j_1}})u\cdot V_{\Z}\subseteq V_{\Z}$$
and hence by the inductive hypothesis $t_{\beta_{j_{i}}}\in \Z$, for all $i=2,3,\dots, n.$
This completes the induction and the proof of the statement follows.
\end{proof}

%%%%%%%%%%%%%%%%%%%%%%%%%%%%%%%%%%%%%%%%%%
%%%%%%%%%%%%%%%%%%%%%%%%%%%%%%%%%%%%%%%%%%
\section{ Toral subgroup}

 Recall from Equation (\ref{torele}) that for $\mu\in{\rm wts} (V)$, the elements $h_{i}(t)$ 
 act on weight vectors $v\in V_\mu$ as $$h_i(t)\cdot v = t^{\langle\mu,h_{\a_i}\rangle }v.$$

\begin{theorem}\label{toral} Let $V$  be a finite dimensional faithful $\frak g$-module. 
Let  $H(\Q)\leq G(\Q)$ be the subgroup generated by the elements $h_i(t)$,  $t\in \Q^{\times}$, $i\in I$. 
\begin{enumerate}
\item  The group $H(\Q)$ is abelian.
\item Every element of $H(\Q)$ may be expressed in the form $\prod_{i\in I} h_i(t_i)$, for $t_i\in \Q^{\times}$.
\end{enumerate}
\end{theorem}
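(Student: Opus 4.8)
The plan is to prove the two assertions separately, both resting on the explicit action of the $h_i(t)$ on weight spaces given by Equation~(\ref{torele}). First I would establish that $H(\Q)$ is abelian. Fix a basis of $V$ consisting of weight vectors, compatible with the decomposition $V=\bigoplus_{\mu\in\wts(V)} V_\mu$. Each generator $h_i(t)$ acts on $V_\mu$ as the scalar $t^{\langle\mu,h_{\a_i}\rangle}$, hence each $h_i(t)$ is \emph{diagonal} with respect to this fixed weight basis. Since any two diagonal matrices commute, $h_i(s)h_j(t)=h_j(t)h_i(s)$ for all $i,j\in I$ and all $s,t\in\Q^\times$; as these elements generate $H(\Q)$, the group is abelian. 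This takes care of (1).

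For (2), I would first record the one-parameter subgroup relations: for fixed $i$, the map $t\mapsto h_i(t)$ is a homomorphism $\Q^\times\to H(\Q)$, since on $V_\mu$ it sends $t$ to $t^{\langle\mu,h_{\a_i}\rangle}$ and $(st)^{\langle\mu,h_{\a_i}\rangle}=s^{\langle\mu,h_{\a_i}\rangle}t^{\langle\mu,h_{\a_i}\rangle}$; in particular $h_i(s)h_i(t)=h_i(st)$ and $h_i(t)^{-1}=h_i(t^{-1})$. Now take an arbitrary element $g\in H(\Q)$. By definition it is a finite word in the generators $h_i(t)^{\pm1}$. Using commutativity from part (1), collect all factors with the same index $i$ together; using the one-parameter relation just noted, each such block of factors with index $i$ collapses to a single factor $h_i(t_i)$ for some $t_i\in\Q^\times$ (equal to the product of the individual parameters). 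This rewrites $g$ in the claimed form $\prod_{i\in I} h_i(t_i)$, with the product taken in the fixed order $1,2,\dots,\ell$.

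The main obstacle, and the one point that needs genuine care rather than formal manipulation, is that the expression $\prod_{i\in I}h_i(t_i)$ in part (2) is asserted as a way to \emph{write} every element, not as a \emph{unique normal form}: the tuple $(t_i)_{i\in I}$ need not be uniquely determined by $g$, because the characters $\mu\mapsto\langle\mu,h_{\a_i}\rangle$ restricted to $\wts(V)$ need not be independent — for instance in the simply connected $\SL_2$ example the single generator already has a nontrivial kernel relation only over larger rings, but in general for non-simply-connected $L_V$ there can be relations among the $h_i(t_i)$. Since the theorem as stated only claims existence of such an expression, no uniqueness argument is required, so this is a non-issue for the proof; I would simply flag that the representation is not canonical. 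If one did want more, one would invoke the identification of $H_V$ with $\Hom(L_V,\Q^\times)$-type data from Section~\ref{universal}, but that is beyond what the statement demands. A secondary minor point is to confirm that inverses of generators are themselves of the form $h_i(t^{-1})$, which follows immediately from the homomorphism property above, so the word-collecting argument genuinely terminates in the stated form.
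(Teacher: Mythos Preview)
Your proof is correct and follows essentially the same approach as the paper: both arguments rest on the fact that each $h_i(t)$ acts as the scalar $t^{\langle\mu,h_{\alpha_i}\rangle}$ on $V_\mu$, so the generators commute (giving part~(1)) and satisfy $h_i(s)h_i(t)=h_i(st)$, whence any word in them collapses to the form $\prod_{i\in I} h_i(t_i)$ (giving part~(2)). The only difference is that the paper additionally verifies the identity $h_\alpha(t)=\prod_{i\in I} h_i(t^{c_i})$ for an arbitrary root $\alpha$ (where $\alpha^\vee=\sum_i c_i\alpha_i^\vee$); since the theorem defines $H(\Q)$ as generated by the $h_i(t)$ alone, this extra step is not needed, and your more direct argument suffices.
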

\begin{proof} Let $\alpha,\beta\in\Delta$.  Let  $\mu\in {\rm wts}(V)$. For each weight $\mu$ of $V$, and $s,t\in \Q^{\times}$, 
$h_{\alpha}(s)h_{\beta}(t)$ acts on the weight space $V_{\mu}$  as $s^{\langle\mu,\a^{\vee}\rangle }t^{\langle\mu,\beta^{\vee}\rangle }$. Thus $h_{\alpha}(s)h_{\beta}(t)=h_{\beta}(t)h_{\alpha}(s)$. Since this is true for each representation $V$, we obtain
$h_{\alpha}(s)h_{\beta}(t)=h_{\beta}(t)h_{\alpha}(s)$. Similarly $h_{\a}(s)h_{\a}(t)=h_{\a}(st)$, which proves~(1).

For (2), let  $\a^{\vee}=\sum_{i\in I} c_i\a_i^{\vee}$. Then for $t\in \Q^{\times}$ and each weight $\mu$ of $V$,
$$t^{\langle\mu,\a^{\vee}\rangle }=t^{\langle\mu,\sum_{i\in I} c_i\a_i^{\vee}\rangle }=\prod_{i\in I}t^{c_i\langle\mu,\a_i^{\vee}\rangle }.$$
But $h_{\alpha}(t)$ acts as $t^{\langle\mu,\a^{\vee}\rangle }$ on $V^{\mu}$ and $\prod_{i\in I} h_i(t^{c_i})$ acts as $\prod_{i\in I}t^{c_i\langle\mu,\a_i^{\vee}\rangle }$ on $V^{\mu}$. Since this is true for every $V$, we obtain
$$h_{\alpha}(t)=\prod_{i\in I} h_i(t^{c_i}),$$
so every $h_{\alpha}(t)$ can be written in the desired form. Since $H(\Q)$ is generated by the $h_i(t)$ and is abelian, every element of $H(\Q)$ can be written in the desired form. 
\end{proof}

\begin{theorem}  
Let $V$ be a finite dimensional faithful $\frak g$-module  such that the set of weights of $V$ contains all the  fundamental weights. Then the map $t\mapsto h_i(t)$ is an injective homomorphism from $\Q^{\times}$ into $H(\Q)$, for each $i\in I$.
\end{theorem}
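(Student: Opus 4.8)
The plan is to split the assertion into the homomorphism property and injectivity, and to observe that almost all the work has already been done. For the homomorphism property I would simply invoke the computation made in the proof of Theorem~\ref{toral}: there we saw that $h_{\a}(s)h_{\a}(t)=h_{\a}(st)$ for every $\a\in\Delta$, and in particular for $\a=\a_i$, so that $h_i(s)h_i(t)=h_i(st)$; moreover $h_i(1)$ acts on each weight space $V_\mu$ as the scalar $1^{\langle\mu,h_{\a_i}\rangle}=1$, hence $h_i(1)=\Id$. Since each $h_i(t)$ lies in $H(\Q)$ by definition, the map $t\mapsto h_i(t)$ is a group homomorphism $\Q^\times\to H(\Q)$.

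For injectivity I would use the weight-space description of the toral action together with the hypothesis on $\wts(V)$. By Equation~(\ref{torele}), $h_i(t)$ acts on $V_\mu$ as multiplication by the scalar $t^{\langle\mu,h_{\a_i}\rangle}$. Suppose $h_i(t)=\Id$ on $V$. Because $\wts(V)$ contains all fundamental weights, the weight space $V_{\omega_i}$ is nonzero; choose $0\neq v\in V_{\omega_i}$. Since $\langle\omega_i,h_{\a_j}\rangle=\delta_{ij}$, we get $h_i(t)\cdot v=t^{\langle\omega_i,h_{\a_i}\rangle}v=t\,v$. Comparing with $h_i(t)\cdot v=v$ and using $v\neq0$ forces $t=1$. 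Therefore the kernel of $t\mapsto h_i(t)$ is trivial, and the map is injective.

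I do not expect a genuine obstacle here; the statement is essentially a corollary of Theorem~\ref{toral}. The only point that needs care is the role of the hypothesis: it is used exactly to guarantee that $V_{\omega_i}\neq0$, so that the scalar $t$ is detected by the action on a nonzero vector. Without this assumption the map can fail to be injective (for example when $L_V$ is a proper sublattice of $P$, a nontrivial $h_i(t)$ may act trivially on every weight space actually occurring in $V$), so this is precisely where the content of the statement lies and where I would be most careful in the writeup.
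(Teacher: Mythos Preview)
Your proof is correct and follows essentially the same approach as the paper: the homomorphism property is inherited from Theorem~\ref{toral}, and injectivity is obtained by letting $h_i(t)$ act on a nonzero vector in $V_{\omega_i}$ to read off the scalar $t$. The paper's proof is terser (it omits the homomorphism discussion entirely), but the key idea is identical.
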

\begin{proof}
Let $h_i(t)\in H(\Q)$. Since each fundamental weight  $\omega_i$, $i\in I$,  is a weight of $V$, $h_i(t)$ acts as scalar multiplication on the weight space  $V_{\omega_i}$   by
$$t^{\langle\omega_i,\a_i^{\vee}\rangle }=t.$$ Hence for each $i\in I$, if  $h_i(t)=1$ then  $t=1$.%({\color{magenta} if  $h_i(t)=1$, then either $\langle\mu,\a_i^{\vee}\rangle=0$ for all $\mu\in\wts(V)$ or $t=\pm 1$. The firs condition is impossible since $\lambda$ is dominant regular}). 
\end{proof}

\begin{corollary}\label{C-injective}   
Let $V$ be a $\frak g$-module   such that the set of weights of $V$ contains all the  fundamental weights. Then the map $$(\Q^{\times})^{\ell}\longrightarrow H(\Q)$$
$$(t_1,t_2,\dots ,t_{\ell})\mapsto\prod_{i\in I} h_i(t_i)$$ 
is an injective homomorphism.
\end{corollary}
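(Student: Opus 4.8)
The plan is to obtain the statement as a short consequence of Theorem~\ref{toral}. First I would check that the map is a group homomorphism. Given $(s_i)_{i\in I}$ and $(t_i)_{i\in I}$ in $(\Q^\times)^\ell$, I would expand the product $\bigl(\prod_{i\in I} h_i(s_i)\bigr)\bigl(\prod_{i\in I} h_i(t_i)\bigr)$ and use part~(1) of Theorem~\ref{toral}, which states that $H(\Q)$ is abelian, to commute all factors so that those sharing an index become adjacent; the single-variable relation $h_i(s)h_i(t)=h_i(st)$ established in the proof of that theorem then collapses each adjacent pair, yielding $\prod_{i\in I} h_i(s_i t_i)$. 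Since the image obviously lies in $H(\Q)$, this shows the map is a homomorphism into $H(\Q)$ sending componentwise multiplication on $(\Q^\times)^\ell$ to multiplication in $H(\Q)$.

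For injectivity, I would evaluate a kernel element $\prod_{i\in I} h_i(t_i)$ on the weight spaces $V_{\omega_j}$, which are nonzero for every $j\in I$ precisely because $\wts(V)$ contains all the fundamental weights. By the defining formula~(\ref{torele}), $h_i(t_i)$ acts on $V_{\omega_j}$ as multiplication by $t_i^{\langle\omega_j,\alpha_i^\vee\rangle}=t_i^{\delta_{ij}}$, so $\prod_{i\in I} h_i(t_i)$ acts on $V_{\omega_j}$ as the scalar $t_j$. If $\prod_{i\in I} h_i(t_i)=1$, then this scalar equals $1$ for each $j$, forcing $t_j=1$ for all $j\in I$, so the kernel is trivial.

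There is no serious obstacle here: the content has already been isolated in the preceding theorem, which handles the decomposition of elements of $H(\Q)$ and the injectivity of each $t\mapsto h_i(t)$. The only points requiring a little care are that the homomorphism property genuinely needs the commutativity of $H(\Q)$ from Theorem~\ref{toral}(1) — the one-variable relation alone is insufficient because the two products interleave factors with different indices — and that injectivity must be tested against the fundamental weights $\omega_j$ rather than arbitrary weights, so that the Kronecker-delta pairing $\langle\omega_j,\alpha_i^\vee\rangle=\delta_{ij}$ isolates each coordinate $t_j$ separately.
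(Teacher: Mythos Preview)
Your argument is correct and follows essentially the same route as the paper: both use the action on the weight spaces $V_{\omega_j}$ (via the preceding theorem) to isolate each coordinate $t_j$. Your version is in fact more carefully written, since the paper's one-line proof merely asserts that injectivity on each factor suffices, whereas you make explicit the Kronecker-delta computation that actually separates the coordinates, and you also verify the homomorphism property, which the paper omits.
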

\begin{proof}
For each $i\in I$, the restriction $t_i\mapsto h_i(t_i)$ of this map to the $i$-th factor of $(\Q^{\times})^{\ell}$ is injective. 
\end{proof}

 We note that the hypothesis of Corollary~\ref{C-injective}, that the set of weights contains all the fundamental weights, is satisfied when $G$ is a simply connected Chevalley group or whenever the root lattice $Q$ is isomorphic to the weight lattice $P$.
%%%%%%%%%%%%%%%%%%%%%%%%%%%%%%%%%%%%%%%%%%%%%%%%%%%%%%
%%%%%%%%%%%%%%%%%%%%%%%%%%%%%%%%%%%%%%%%%%%%%%%%%%%%%%
\section{Iwasawa decomposition of $G$}
  We recall the Bruhat decomposition of $G$, which holds over any field \cite{Steinberg}:
$$G=BWB =\bigsqcup_{w\in W}BwB$$ 
where $B=HU$ is
the Borel subgroup of $G$. We also write $B=B(\Q)$.

As before, let $V$ denote a finite dimensional faithful $\frak g$-module with highest weight $\lambda$. We assume that $L_{V}=P$. Let $G({\mathbb{Q}})$ be the corresponding simply connected Chevalley group.
 % We define$$\Gamma({\mathbb{Z}})=\{g\in G({\mathbb{Q}})\mid g\cdot V_{\Z}= V_{\Z}\}.$$

\begin{lemma}\label{SL2Bruhat} We have $\SL_2({\mathbb{Q}})=\SL_2(\Z)B(\SL_2({\mathbb{Q}}))$, where $B(\SL_2({\mathbb{Q}}))$ denotes the Borel subgroup of $\SL_2({\mathbb{Q}})$.
\end{lemma}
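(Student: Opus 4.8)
The plan is to establish the identity $\SL_2(\Q)=\SL_2(\Z)\,B$ by an explicit reduction argument, where $B=B(\SL_2(\Q))$ consists of the upper-triangular matrices in $\SL_2(\Q)$. Since the reverse inclusion $\SL_2(\Z)\,B\subseteq\SL_2(\Q)$ is trivial, the content is the forward inclusion: given $g=\left(\begin{smallmatrix} a & b\\ c & d\end{smallmatrix}\right)\in\SL_2(\Q)$, I must produce $\gamma\in\SL_2(\Z)$ with $\gamma g\in B$, i.e.\ with the $(2,1)$-entry of $\gamma g$ equal to zero. First I would treat the trivial case $c=0$, where $g\in B$ already. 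When $c\neq0$, write the first column $\left(\begin{smallmatrix} a\\ c\end{smallmatrix}\right)$ with a common denominator: there is a nonzero integer $q$ and coprime integers $p_1,p_2$ with $a=p_1/q$, $c=p_2/q$. The key point is that $\gcd(p_1,p_2)=1$ forces the column $\left(\begin{smallmatrix} p_1\\ p_2\end{smallmatrix}\right)$ to be extendable to an element of $\SL_2(\Z)$: choose $r,s\in\Z$ with $p_1 s - p_2 r = 1$, so that $\gamma_0=\left(\begin{smallmatrix} s & -r\\ -p_2 & p_1\end{smallmatrix}\right)\in\SL_2(\Z)$ kills the bottom entry, namely $\gamma_0\left(\begin{smallmatrix} p_1\\ p_2\end{smallmatrix}\right)=\left(\begin{smallmatrix} 1\\ 0\end{smallmatrix}\right)$, and hence $\gamma_0 g$ has vanishing $(2,1)$-entry, so $\gamma_0 g\in B$. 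Then $g=\gamma_0^{-1}(\gamma_0 g)\in\SL_2(\Z)\,B$.

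An alternative, more structural route is to invoke the standard Euclidean-algorithm fact that $\SL_2(\Z)$ acts transitively on primitive columns in $\Z^2$, or equivalently the elementary-divisor/Smith normal form argument applied to the column $\left(\begin{smallmatrix} p_1\\ p_2\end{smallmatrix}\right)$; either packaging gives the same $\gamma_0$. A third option, which foreshadows the general-rank Iwasawa decomposition $G(\Q)=G(\Z)B(\Q)$ of Theorem~\ref{Bruhat}, is to combine the Bruhat decomposition $\SL_2(\Q)=B\sqcup Bw B$ with the fact that $\SL_2(\Z)=\langle\chi_{\alpha}(t),\chi_{-\alpha}(t)\mid t\in\Z\rangle$ realizes the Weyl element $\widetilde w=\widetilde w_\alpha(1)\in\SL_2(\Z)$, then clear denominators inside each Bruhat cell using integral $\chi_{-\alpha}$'s to move from $B w B$ into $\SL_2(\Z)\,B$; but for the rank-one case the direct column argument above is shortest and self-contained.

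I expect no serious obstacle here: the only thing to be careful about is the clearing-of-denominators step, making sure the two entries of the first column are brought over a \emph{common} denominator so that their numerators are genuinely coprime (dividing out any common factor first), since it is precisely the coprimality that lets Bézout produce an element of $\SL_2(\Z)$ rather than merely of $\GL_2(\Q)$. I would also remark that $g$ and $\gamma_0^{-1}$ having determinant $1$ forces the diagonal block of $\gamma_0 g$ to lie in $B$ as an element of $\SL_2(\Q)$, not just of the upper-triangular monoid, which is automatic but worth noting. This lemma is the rank-one seed for the induction underlying Theorem~\ref{Bruhat}, and the argument given here is exactly the base case of that more general reduction.
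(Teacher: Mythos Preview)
Your proposal is correct and takes essentially the same approach as the paper: both left-multiply a given $g\in\SL_2(\Q)$ by an element of $\SL_2(\Z)$ whose bottom row is built, via B\'ezout, from coprime integers proportional to the first column of $g$, thereby killing the $(2,1)$-entry. One harmless slip: your common denominator $q$ need not remain an integer after you divide out the gcd of the numerators (e.g.\ first column $(2/3,\,4/3)$ gives $p_1=1$, $p_2=2$, $q=3/2$), but your argument only uses the coprimality of $p_1,p_2$, never the integrality of $q$, so nothing is affected.
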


\begin{proof}
Let $\left(\begin{matrix} a & b \\ c & d \end{matrix}\right)\in \SL_2(\Z)$ and let $\left(\begin{matrix} p & q \\ r & s \end{matrix}\right)\in \SL_2(\Q)$. Consider the product
$$\left(\begin{matrix} a & b \\ c & d \end{matrix}\right)\left(\begin{matrix} p & q \\ r & s \end{matrix}\right)\in \SL_2({\mathbb{Q}}).$$
We may choose $c,d\in\Z$ with $\gcd(c,d)=1$ and $cp+dr=0$. This determines $a,\ b\in\Z$ such that $ad-bc=1$. Then
$$\left(\begin{matrix} p & q \\ r & s \end{matrix}\right)=\left(\begin{matrix} a & b \\ c & d \end{matrix}\right)^{-1}\left(\begin{matrix} \a & \b \\ 0 & \gamma \end{matrix}\right)\in \SL_2(\Z)B(\SL_2(\Q)).\ \qedhere$$    
\end{proof} 

See also \cite{Steinberg}, Lemma 43(c), p.\ 101.

\begin{lemma}[\cite{Steinberg}, Lemmas 48 and 49]\label{integralimage}  For fixed $i$, $$\varphi_i(\SL_2(\Z))=\langle\chi_{\alpha_i}(s),\chi_{-\alpha_i}(t)\mid s,t\in\Z\rangle.$$
\end{lemma}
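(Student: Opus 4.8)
The plan is to show the two inclusions separately, using the explicit homomorphism $\varphi_i:\SL_2(\C)\to G_V(\C)$ from Proposition~\ref{sl2s'} together with the standard fact that $\SL_2(\Z)$ is generated by the elementary matrices $\left(\begin{smallmatrix}1&1\\0&1\end{smallmatrix}\right)$ and $\left(\begin{smallmatrix}1&0\\1&1\end{smallmatrix}\right)$. For the inclusion $\varphi_i(\SL_2(\Z))\subseteq\langle\chi_{\alpha_i}(s),\chi_{-\alpha_i}(t)\mid s,t\in\Z\rangle$: since $\SL_2(\Z)$ is generated by $\left(\begin{smallmatrix}1&1\\0&1\end{smallmatrix}\right)$ and $\left(\begin{smallmatrix}1&0\\1&1\end{smallmatrix}\right)$, and $\varphi_i$ is a homomorphism, $\varphi_i(\SL_2(\Z))$ is generated by $\varphi_i\left(\begin{smallmatrix}1&1\\0&1\end{smallmatrix}\right)=\chi_{\alpha_i}(1)$ and $\varphi_i\left(\begin{smallmatrix}1&0\\1&1\end{smallmatrix}\right)=\chi_{-\alpha_i}(1)$. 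Since $\chi_{\alpha_i}(s)=\chi_{\alpha_i}(1)^s$ and $\chi_{-\alpha_i}(t)=\chi_{-\alpha_i}(1)^t$ for integers $s,t$ (additivity of $\chi_{\pm\alpha_i}$ in the argument, which follows from the commutator formula since $2\alpha_i$ is not a root, hence $\chi_{\alpha_i}(u)\chi_{\alpha_i}(v)=\chi_{\alpha_i}(u+v)$), the generators $\chi_{\alpha_i}(\pm1)$ already generate all of $\langle\chi_{\alpha_i}(s),\chi_{-\alpha_i}(t)\mid s,t\in\Z\rangle$, giving both that this group equals $\langle\chi_{\alpha_i}(1),\chi_{-\alpha_i}(1)\rangle$ and that it contains $\varphi_i(\SL_2(\Z))$.

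For the reverse inclusion, the point is to show every $\chi_{\alpha_i}(s)$ and $\chi_{-\alpha_i}(t)$ with $s,t\in\Z$ lies in the image $\varphi_i(\SL_2(\Z))$. This is immediate: $\chi_{\alpha_i}(s)=\varphi_i\left(\begin{smallmatrix}1&s\\0&1\end{smallmatrix}\right)$ and the matrix $\left(\begin{smallmatrix}1&s\\0&1\end{smallmatrix}\right)$ lies in $\SL_2(\Z)$ when $s\in\Z$; likewise $\chi_{-\alpha_i}(t)=\varphi_i\left(\begin{smallmatrix}1&0\\t&1\end{smallmatrix}\right)\in\varphi_i(\SL_2(\Z))$ for $t\in\Z$. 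Hence $\langle\chi_{\alpha_i}(s),\chi_{-\alpha_i}(t)\mid s,t\in\Z\rangle\subseteq\varphi_i(\SL_2(\Z))$, and combining the two inclusions gives equality.

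There is essentially no serious obstacle here; the statement is a bookkeeping consequence of Proposition~\ref{sl2s'}. The one point that deserves a remark is the additivity $\chi_{\alpha_i}(u)\chi_{\alpha_i}(v)=\chi_{\alpha_i}(u+v)$, which is needed to see that $\chi_{\alpha_i}(1)$ alone generates all the $\chi_{\alpha_i}(s)$, $s\in\Z$; this follows from the commutator relations recalled in Section~\ref{universal} (with the degenerate case $\beta=\alpha$, where $2\alpha_i\notin\Delta$ forces the product on the right-hand side to be trivial), or alternatively directly from $\exp(ux_{\alpha_i})\exp(vx_{\alpha_i})=\exp((u+v)x_{\alpha_i})$ since $x_{\alpha_i}$ commutes with itself. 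I would state this additivity explicitly as it is the only non-tautological ingredient. The essential content of the lemma is really the non-obvious direction proved earlier — that $\varphi_i$ carries $\SL_2(\Z)$ \emph{onto} the $\Z$-generated subgroup and no further — which for us is packaged in Proposition~\ref{sl2s'} and the generation of $\SL_2(\Z)$ by elementary matrices.
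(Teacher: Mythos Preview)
Your argument is correct. The paper does not give its own proof of this lemma; it simply cites \cite{Steinberg}, Lemmas 48 and 49, so there is no ``paper's proof'' to compare against. Your approach---using Proposition~\ref{sl2s'} together with the generation of $\SL_2(\Z)$ by the two elementary unipotent matrices---is the natural one and is essentially what underlies Steinberg's treatment. One minor remark: the additivity $\chi_{\alpha_i}(u)\chi_{\alpha_i}(v)=\chi_{\alpha_i}(u+v)$ is not really needed once you argue both inclusions directly as you do in the second paragraph, since $\chi_{\alpha_i}(1)$ and $\chi_{-\alpha_i}(1)$ are already among the generators of the right-hand side; your first paragraph establishes more than the stated inclusion requires.
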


Now let $Y_i$ be a system of coset representatives for $Bw_iB/B$. A natural choice is \cite[Lemma 43(a)]{Steinberg}
$$Y_i=\{\chi_{\a_i}(t)w_i\mid t\in\Q\}.$$
\begin{lemma}\label{intcosetrep} The $Y_i$ can be chosen with integral coordinates. That is, we may choose coset representatives $Y_i=\{\chi_{\a_i}(t)w_i\}$  for $Bw_iB/B$ such that 
$Y_i\subseteq \varphi_i(\SL_2(\Z))$. \end{lemma}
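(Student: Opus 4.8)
The plan is to transport the decomposition $\SL_2(\Q)=\SL_2(\Z)\,B(\SL_2(\Q))$ of Lemma~\ref{SL2Bruhat} forward along the homomorphism $\varphi_i\colon\SL_2(\Q)\to G(\Q)$ of Proposition~\ref{sl2s'}. First recall from \cite[Lemma~43(a)]{Steinberg} that $Bw_iB=\bigsqcup_{t\in\Q}\chi_{\a_i}(t)w_iB$, so the natural transversal $\{\chi_{\a_i}(t)w_i\mid t\in\Q\}$ really is a complete set of representatives for $Bw_iB/B$, indexed by $t\in\Q$; the job is to replace each $\chi_{\a_i}(t)w_i$ by an element of $\varphi_i(\SL_2(\Z))$ lying in the same left $B$-coset.

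For the second step I would note that under $\varphi_i$ the unipotent upper-triangular matrices map to root group elements $\chi_{\a_i}(s)\in U$ and the diagonal torus maps to the $h_i(t)\in H$ (an easy computation gives $\widetilde{w}_{\a_i}(t)=\varphi_i\!\left(\begin{matrix}0&t\\-t^{-1}&0\end{matrix}\right)$, hence $h_i(t)=\varphi_i\!\left(\begin{matrix}t&0\\0&t^{-1}\end{matrix}\right)$), so that $\varphi_i\bigl(B(\SL_2(\Q))\bigr)\subseteq HU=B$. Writing $\chi_{\a_i}(t)w_i=\varphi_i(g_t)$ with $g_t=\left(\begin{matrix}1&t\\0&1\end{matrix}\right)\left(\begin{matrix}0&1\\-1&0\end{matrix}\right)\in\SL_2(\Q)$ and using the lift $w_i=\widetilde{w}_i=\varphi_i\!\left(\begin{matrix}0&1\\-1&0\end{matrix}\right)$, Lemma~\ref{SL2Bruhat} lets us factor $g_t=\gamma_t b_t$ with $\gamma_t\in\SL_2(\Z)$ and $b_t\in B(\SL_2(\Q))$, whence
$$\chi_{\a_i}(t)w_i\,B=\varphi_i(\gamma_t)\,\varphi_i(b_t)\,B=\varphi_i(\gamma_t)\,B.$$
Thus $\varphi_i(\gamma_t)$ lies in the same left $B$-coset as $\chi_{\a_i}(t)w_i$; in particular it lies in $Bw_iB$ and in $\varphi_i(\SL_2(\Z))$.

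Finally I would set $Y_i:=\{\varphi_i(\gamma_t)\mid t\in\Q\}$. Since $t\mapsto\chi_{\a_i}(t)w_iB$ is a bijection onto $Bw_iB/B$ and $\varphi_i(\gamma_t)B=\chi_{\a_i}(t)w_iB$ for every $t$, the set $Y_i$ is again a complete system of coset representatives, and by construction $Y_i\subseteq\varphi_i(\SL_2(\Z))$. I do not anticipate a genuine obstacle: the only points requiring care are the identifications $\chi_{\a_i}(t)w_i=\varphi_i(g_t)$ and $\varphi_i\bigl(B(\SL_2(\Q))\bigr)\subseteq B$, both immediate from Proposition~\ref{sl2s'} and $B=HU$, together with the check that the modified transversal still meets each coset exactly once, which is forced by the cell decomposition above. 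One can also bypass Lemma~\ref{SL2Bruhat} entirely and argue directly: writing $t=p/q$ in lowest terms and choosing $b,d\in\Z$ with $pd-qb=1$, the matrix $\left(\begin{matrix}p&b\\q&d\end{matrix}\right)\in\SL_2(\Z)$ has first column proportional to that of $g_t$, hence represents the same left $B$-coset, and $\varphi_i$ of it gives the desired representative.
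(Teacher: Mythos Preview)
Your proposal is correct and follows essentially the same route as the paper: compute $\chi_{\a_i}(t)w_i=\varphi_i(g_t)$ inside $\SL_2(\Q)$, apply Lemma~\ref{SL2Bruhat} to write $g_t=\gamma_t b_t$ with $\gamma_t\in\SL_2(\Z)$ and $b_t\in B(\SL_2(\Q))$, and then take $\varphi_i(\gamma_t)$ as the new representative using $\varphi_i(B(\SL_2(\Q)))\subseteq B$. Your write-up is in fact more careful than the paper's, which gives an explicit factorization that is only visibly correct for $t\in\Z$ and leaves the check $\varphi_i(B(\SL_2(\Q)))\subseteq B$ implicit; your verification that the new $Y_i$ still meets each coset exactly once and your alternative ``lowest terms'' argument are both fine.
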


\begin{proof}
In $\SL_2(\Q)$ we have 
$$\chi_{\a_i}(t)w_i=\left(\begin{matrix} 1 & t \\ 0 & 1 \end{matrix}\right)\left(\begin{matrix} 0 & 1 \\ -1 & 0 \end{matrix}\right)=
\left(\begin{matrix} -t & ~1 \\ -1 & ~0 \end{matrix}\right)\in\SL_2(\Q).$$
By Lemma~\ref{SL2Bruhat}, we can write an element of $\SL_2(\Q)$ in $\SL_2(\Z)B(\SL_2({\mathbb{Q}}))$:
$$\left(\begin{matrix} -t & ~1 \\ -1 & ~0 \end{matrix}\right)=\left(\begin{matrix} -t & ~t+1 \\ -1 & ~1 \end{matrix}\right)\cdot \left(\begin{matrix} 1 & 1 \\ 0 & 1 \end{matrix}\right).$$
Modulo $B$, the $Y_i$ can thus be chosen such that $t\in\Z$ and so  
$$Y_i=\varphi_i\left(\left(\begin{matrix} 1 & t \\ 0 & 1 \end{matrix}\right)\cdot \left(\begin{matrix} 0 & 1 \\ -1 & 0 \end{matrix}\right)\right)\subseteq \varphi_i(\SL_2(\Z)).  \qedhere$$
\end{proof}
It follows that for each $i$, 
$$Y_i\subseteq \Delta_i(\SL_2(\Z))=\langle\chi_{\alpha_i}(s),\chi_{-\alpha_i}(t)\mid s,t\in\Z\rangle\leq G(\Z).$$

\begin{lemma} For each $i\in I$,
$$Bw_iB= Y_iB,$$
where $Y_i$ is a system of coset representatives for $Bw_iB/B$.
\end{lemma}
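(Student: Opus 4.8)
The plan is to prove this directly from the definition of a system of coset representatives, using only that $B$ is a subgroup of $G$. First I would record the one genuinely group-theoretic input: since $BB=B$, the double coset $Bw_iB$ is stable under right multiplication by $B$, that is $(Bw_iB)B=Bw_i(BB)=Bw_iB$. Hence $Bw_iB$ is a union of cosets $gB$ (left cosets of $B$), and the set of these cosets is by definition $Bw_iB/B$; this is what makes the phrase ``system of coset representatives for $Bw_iB/B$'' meaningful in the statement.

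Next I would check the two inclusions. For $Y_iB\subseteq Bw_iB$: each $y\in Y_i$ represents a coset of $Bw_iB/B$, so $y\in yB\subseteq Bw_iB$, whence $Y_i\subseteq Bw_iB$ and therefore $Y_iB\subseteq(Bw_iB)B=Bw_iB$ by the previous step. For $Bw_iB\subseteq Y_iB$: given $g\in Bw_iB$, its coset $gB$ lies in $Bw_iB/B$, so $gB=yB$ for the (unique) $y\in Y_i$ representing it, and hence $g\in yB\subseteq Y_iB$. Combining the two inclusions gives $Bw_iB=Y_iB$.

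I do not expect a real obstacle here: the statement is a formal unwinding of definitions once one knows that $Bw_iB$ decomposes into left $B$-cosets. The substantive work has already been done in the preceding lemmas --- \cite[Lemma 43(a)]{Steinberg} exhibits $\{\chi_{\alpha_i}(t)w_i\mid t\in\Q\}$ as such a system, and Lemma~\ref{intcosetrep} refines the choice so that $Y_i\subseteq\varphi_i(\SL_2(\Z))\subseteq G(\Z)$. The point of stating the present lemma is that, with that refined $Y_i$, the equality $Bw_iB=Y_iB$ immediately yields $Bw_iB\subseteq G(\Z)B$, which is the form needed in the induction on $\ell(w)$ toward the Iwasawa-type decomposition $G(\Q)=G(\Z)B(\Q)$.
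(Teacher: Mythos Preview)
Your argument is correct: once $Y_i$ is declared to be a system of coset representatives for $Bw_iB/B$, the equality $Bw_iB=Y_iB$ is a tautology, and your unwinding of that tautology is clean. The paper itself does not supply a proof here at all; it simply records the lemma and points to \cite[Theorem~15]{Steinberg}, so your write-up is more explicit than the original.
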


See also  \cite{Steinberg}, Theorem 15, p 99.

\begin{corollary}\label{base} 
For each $i\in I$, we have $Bw_iB=Y_iB\leq G(\Z)B(\Q).$
\end{corollary}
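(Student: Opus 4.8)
The plan is to read this corollary off directly from the two immediately preceding results, so the argument is essentially bookkeeping. First I would fix $i\in I$ and apply the preceding lemma, which gives the set identity $Bw_iB=Y_iB$ for any complete system $Y_i$ of coset representatives of $Bw_iB/B$. Next I would invoke Lemma~\ref{intcosetrep}, which \emph{produces} one such system with $Y_i\subseteq\varphi_i(\SL_2(\Z))$, and then Lemma~\ref{integralimage}, which gives $\varphi_i(\SL_2(\Z))=\langle\chi_{\alpha_i}(s),\chi_{-\alpha_i}(t)\mid s,t\in\Z\rangle\leq G(\Z)$. Fixing this integral choice of $Y_i$ once and for all, the identity $Bw_iB=Y_iB$ then yields $Bw_iB=Y_iB\subseteq G(\Z)\,B(\Q)$, using $B=B(\Q)$. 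That is the whole argument.

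The one point worth a sentence of care is the compatibility of the two roles played by $Y_i$: the preceding lemma asserts $Bw_iB=Y_iB$ for an \emph{arbitrary} complete set of coset representatives, whereas Lemma~\ref{intcosetrep} singles out one particular such set sitting inside $\varphi_i(\SL_2(\Z))$. Since both statements are valid for that same integral choice, there is nothing to reconcile — one simply works with that choice throughout. I do not expect any genuine obstacle here.

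I would note in passing why the statement is isolated as a corollary: it is the base case $\ell(w)=1$ for an induction on the Bruhat length that will upgrade $Bw_iB\subseteq G(\Z)B(\Q)$ to the full decomposition $G(\Q)=G(\Z)B(\Q)$, which is the Iwasawa-type ingredient needed in the proof of Theorem~\ref{stabilizer1}. Accordingly, in writing the proof I would phrase the conclusion as the set inclusion $Y_iB\subseteq G(\Z)B(\Q)$ (the symbol ``$\leq$'' in the statement being read as containment, since $G(\Z)B(\Q)$ is a subset, not a priori a subgroup), so that it chains cleanly with the coset-representative induction to follow.
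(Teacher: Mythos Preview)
Your proposal is correct and follows exactly the route the paper intends: combine the preceding lemma $Bw_iB=Y_iB$ with Lemma~\ref{intcosetrep} and the displayed consequence $Y_i\subseteq\varphi_i(\SL_2(\Z))\leq G(\Z)$ to conclude $Y_iB\subseteq G(\Z)B(\Q)$. The paper leaves the corollary unproved precisely because it is this immediate concatenation, and your remarks on the role of the integral choice of $Y_i$ and on reading ``$\leq$'' as set containment are apt.
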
 

For example, an element $\left(\begin{matrix} 1/2 & ~0 \\ 3/4 & ~2 \end{matrix}\right)\in \SL_2(\Q)$ that is not in upper triangular form can be written in $\SL_2(\Z)B(\SL_2(\Q))$:
$$\left(\begin{matrix} 1/2 & ~0 \\ 3/4 & ~2 \end{matrix}\right)=\left(\begin{matrix} 2 & 1 \\ 3 & 2 \end{matrix}\right)
\left(\begin{matrix} 1/4 & -2 \\ 0 & ~4 \end{matrix}\right).$$

The following theorem gives an analog of Lemma~\ref{SL2Bruhat} for $G({\mathbb{Q}})$.
\begin{theorem}[\cite{Steinberg}, Theorem 18, p.\ 114]\label{Bruhat} We have
$G({\mathbb{Q}})=G(\Z)B({\mathbb{Q}})$.
\end{theorem}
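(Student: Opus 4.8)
The plan is to combine the Bruhat decomposition recalled above, $G(\Q)=\bigsqcup_{w\in W}B(\Q)\,w\,B(\Q)$, with Corollary~\ref{base}, so that the whole statement collapses to an induction on the length $\ell(w)$. Throughout I will use the two trivial absorption identities $G(\Z)\cdot G(\Z)=G(\Z)$ and $B(\Q)\cdot B(\Q)=B(\Q)$, together with the standard fact (part of the BN-pair theory invoked for Bruhat) that the double coset $B(\Q)wB(\Q)$ does not depend on the chosen representative of $w$ in $N_{G(\Q)}(H)$; in particular, if $w=w_{j_1}\cdots w_{j_m}$ is a reduced word and $\widetilde w=\widetilde w_{j_1}\cdots\widetilde w_{j_m}$ is the corresponding lift, then $B(\Q)wB(\Q)=B(\Q)\widetilde wB(\Q)$. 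Note first that each simple lift satisfies $\widetilde w_i=\chi_{\alpha_i}(1)\chi_{-\alpha_i}(-1)\chi_{\alpha_i}(1)\in G(\Z)$ (all arguments are $\pm1\in\Z$; alternatively $\widetilde w_i\in\varphi_i(\SL_2(\Z))\subseteq G(\Z)$ by Lemma~\ref{integralimage}). Since $1\in B(\Q)$, we get $B(\Q)\widetilde w_i\subseteq B(\Q)\widetilde w_iB(\Q)=B(\Q)w_iB(\Q)$, which by Corollary~\ref{base} lies in $G(\Z)B(\Q)$. Hence
$$B(\Q)\,\widetilde w_i\subseteq G(\Z)B(\Q)\qquad\text{for every }i\in I.$$

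The key step is the following claim: \emph{$B(\Q)\widetilde w\subseteq G(\Z)B(\Q)$ for all $w\in W$}, proved by induction on $\ell(w)$. The base case $w=1$ is immediate. For $\ell(w)=m\ge 1$, fix a reduced word $w=w_{j_1}\cdots w_{j_m}$, put $w'=w_{j_2}\cdots w_{j_m}$ so that $\ell(w')=m-1$ and $\widetilde w=\widetilde w_{j_1}\widetilde w'$. Then, using the displayed inclusion above on the first factor and the inductive hypothesis on $B(\Q)\widetilde w'$,
\begin{align*}
B(\Q)\,\widetilde w=\bigl(B(\Q)\,\widetilde w_{j_1}\bigr)\widetilde w'&\subseteq G(\Z)\bigl(B(\Q)\,\widetilde w'\bigr)\\
&\subseteq G(\Z)\bigl(G(\Z)B(\Q)\bigr)=G(\Z)B(\Q),
\end{align*}
which completes the induction.

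To conclude, apply the Bruhat decomposition: $G(\Q)=\bigcup_{w\in W}B(\Q)\widetilde wB(\Q)$, and for each $w$ the claim gives $B(\Q)\widetilde wB(\Q)=\bigl(B(\Q)\widetilde w\bigr)B(\Q)\subseteq\bigl(G(\Z)B(\Q)\bigr)B(\Q)=G(\Z)B(\Q)$. Therefore $G(\Q)\subseteq G(\Z)B(\Q)\subseteq G(\Q)$, i.e.\ $G(\Q)=G(\Z)B(\Q)$.

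I expect the main obstacle to be conceptual rather than computational: $G(\Z)B(\Q)$ is not visibly a subgroup of $G(\Q)$, and the naive attempt to show it is one requires $B(\Q)G(\Z)\subseteq G(\Z)B(\Q)$ — pushing an arbitrary element of $G(\Z)$ from the right past $B(\Q)$ — which is essentially equivalent to the theorem itself, so that approach is circular. The induction on $\ell(w)$ is precisely what breaks the circularity: it only ever moves a \emph{single} simple reflection $\widetilde w_{j_1}$ across $B(\Q)$, and Corollary~\ref{base} is exactly the ingredient that licenses this one-step move. One should also keep in mind that the entire argument uses the simply connected hypothesis $L_V=P$ only through Corollary~\ref{base}, whose proof rests on Lemma~\ref{intcosetrep} (the explicit $\SL_2(\Z)$ computation, via Lemmas~\ref{SL2Bruhat} and \ref{integralimage}) — that is where integrality of the coset representatives $Y_i$ is genuinely established, and it is the only place where anything beyond formal group theory enters.
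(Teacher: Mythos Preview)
Your proof is correct and follows essentially the same approach as the paper's: induction on $\ell(w)$ using the Bruhat decomposition, with Corollary~\ref{base} supplying the base case and the one-step move. The only difference is cosmetic: the paper inducts on the double cosets $B(\Q)wB(\Q)$ and invokes the BN-pair axiom B3 to split $B(\Q)ww_{i_{k+1}}B(\Q)=B(\Q)wB(\Q)\cdot w_{i_{k+1}}B(\Q)$, whereas you induct on the single cosets $B(\Q)\widetilde w$ and simply factor $\widetilde w=\widetilde w_{j_1}\widetilde w'$, which is slightly more direct since it avoids the appeal to B3.
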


  {\it Proof:} We prove Theorem~\ref{Bruhat} using the Bruhat decomposition. That is, we prove by induction on the length of $w\in W$ that each Bruhat cell $B({\mathbb{Q}})wB({\mathbb{Q}})$ is contained in $G(\Z)B({\mathbb{Q}})$. The case $\ell(w)=0$ gives 
$B({\mathbb{Q}})\leq G(\Z)B({\mathbb{Q}})$. The case $\ell(w)=1$ follows from Corollary~\ref{base}, which gives a base for induction.

Assume inductively that
$$B({\mathbb{Q}})w_{i_1}w_{i_2}\dots w_{i_k}B({\mathbb{Q}})\leq  G(\Z)B({\mathbb{Q}}),$$
for all $k\geq 1$, where $w_{i_1}w_{i_2}\dots w_{i_k}$ has minimal length, and each $w_{i_j}$ is a simple root reflection. 

   Let  $w_{i_{k+1}}$ be a simple root reflection and assume that $w_{i_1}w_{i_2}\dots w_{i_k}w_{i_{k+1}}$ has minimal length. Then the rule B3 gives
$$B({\mathbb{Q}})w_{i_1}w_{i_2}\dots w_{i_k}w_{i_{k+1}}B({\mathbb{Q}})=B({\mathbb{Q}})w_{i_1}w_{i_2}\dots w_{i_k}B({\mathbb{Q}})w_{i_{k+1}}B({\mathbb{Q}}).$$
By the inductive hypothesis, $B({\mathbb{Q}})w_{i_1}w_{i_2}\dots w_{i_k}B({\mathbb{Q}})\leq  G(\Z)B({\mathbb{Q}})$. Since $w_{i_{k+1}}$  is a simple root reflection, the inductive hypothesis also implies that $$B({\mathbb{Q}})w_{i_{k+1}}B({\mathbb{Q}})\leq  G(\Z)B({\mathbb{Q}}).$$ 
Then $$B({\mathbb{Q}})w_{i_1}w_{i_2}\dots w_{i_k}B({\mathbb{Q}})w_{i_{k+1}}B({\mathbb{Q}})$$ 
is the product in $G(\Z)B({\mathbb{Q}})$ of the images of $B({\mathbb{Q}})w_{i_1}w_{i_2}\dots w_{i_k}B({\mathbb{Q}})$ and $B({\mathbb{Q}})w_{i_{k+1}}B({\mathbb{Q}})$. Thus this product also lies in $G(\Z)B({\mathbb{Q}})$. Hence each  each Bruhat cell $B({\mathbb{Q}})wB({\mathbb{Q}})$ is contained in $G(\Z)B({\mathbb{Q}})$.
$\square$

%%%%%%%%%%%%%%%%%%%%%%%%%%%%%%%%%%%%%%%%%%
%%%%%%%%%%%%%%%%%%%%%%%%%%%%%%%%%%%%%%%%%%

%%%%%%%%%%%%%%%%%%%%%%%%%%%%%%%%%%%%%%%%%%
%%%%%%%%%%%%%%%%%%%%%%%%%%%%%%%%%%%%%%%%%%
\section{Integrality of $G(\mathbb{Q})$}
Recall that $\Gamma(\Z) = \{g \in G(\Q)\mid g\cdot V_{\Z} = V_{\Z}\}$
is the subgroup of $G(\Q)$ that preserves $V_{\Z}$. 

\begin{theorem}\label{stabilizer1} In  $G({\mathbb{Q}})$ we have $G(\Z)\cap U({\mathbb{Q}})=\Gamma(\Z)\cap U({\mathbb{Q}})$. 
Thus any element of $U({\mathbb{Q}})$ that stabilizes $V_{\Z}$ lies in $G(\Z)$.
\end{theorem}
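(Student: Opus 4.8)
The plan is to obtain this directly from Theorem~\ref{stabilizer0} and Lemma~\ref{stabVZ}; the statement really just records two inclusions. For the inclusion $G(\Z)\cap U(\Q)\subseteq\Gamma(\Z)\cap U(\Q)$ I would first check that $G(\Z)\subseteq\Gamma(\Z)$. Lemma~\ref{stabVZ} gives $g\cdot V_{\Z}\subseteq V_{\Z}$ for every $g\in G(\Z)$; applying this to $g^{-1}\in G(\Z)$ as well yields $V_{\Z}=g\bigl(g^{-1}\cdot V_{\Z}\bigr)\subseteq g\cdot V_{\Z}\subseteq V_{\Z}$, so $g\cdot V_{\Z}=V_{\Z}$, i.e.\ $g\in\Gamma(\Z)$. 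Intersecting with $U(\Q)$ gives the first inclusion.

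For the reverse inclusion $\Gamma(\Z)\cap U(\Q)\subseteq G(\Z)\cap U(\Q)$, take $u\in U(\Q)$ with $u\cdot V_{\Z}=V_{\Z}$. By the unique-factorization lemma (\cite{Steinberg}, Lemma~17), $u=\prod_{i=1}^{n}\chi_{\beta_{j_i}}(s_{j_i})$ with $s_{j_i}\in\Q$, the product taken in the fixed height-compatible order on $\Delta^+$. Since in particular $u\cdot V_{\Z}\subseteq V_{\Z}$, Theorem~\ref{stabilizer0} forces $s_{j_i}\in\Z$ for all $i$, so $u\in U(\Z)=\langle\chi_{\alpha}(t)\mid\alpha\in\Delta^+,\ t\in\Z\rangle\subseteq G(\Z)$, hence $u\in G(\Z)\cap U(\Q)$. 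Combining the two inclusions gives the asserted equality, and the last sentence of the theorem is precisely this reverse inclusion: any $u\in U(\Q)$ stabilizing $V_{\Z}$ lies in $U(\Z)\subseteq G(\Z)$. In fact the argument shows the sharper statement $G(\Z)\cap U(\Q)=\Gamma(\Z)\cap U(\Q)=U(\Z)$.

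There is no substantive obstacle left at this stage: all the representation-theoretic work — the $\mathfrak{sl}_2$-triple computations of Lemma~\ref{L-xx}, the choice of the minimal weight $\nu$ in the orbit $W'\cdot\{\lambda\}$ of Lemma~\ref{welong}, and the induction on $|\Delta^+|$ — is already carried out in Theorem~\ref{stabilizer0}, and here it is only packaged. The one small point to be careful about is that $\Gamma(\Z)$ is defined by the \emph{equality} $g\cdot V_{\Z}=V_{\Z}$, whereas Lemma~\ref{stabVZ} and Theorem~\ref{stabilizer0} are phrased with the weaker containment $g\cdot V_{\Z}\subseteq V_{\Z}$; one moves between the two using only that $G(\Z)$ and $\Gamma(\Z)$ are groups, as in the first paragraph. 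Looking ahead, this theorem is the unipotent half of the full integrality statement: an arbitrary $g\in\Gamma(\Z)$ can be written via Theorem~\ref{Bruhat} as $g=g_0b$ with $g_0\in G(\Z)\subseteq\Gamma(\Z)$ and $b\in B(\Q)=H(\Q)U(\Q)$, so that $b\in\Gamma(\Z)$, reducing the remaining problem to the toral factor and to the present unipotent statement.
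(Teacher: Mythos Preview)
Your proof is correct and follows essentially the same route as the paper: both directions come from Lemma~\ref{stabVZ} and Theorem~\ref{stabilizer0}, with the unique factorization in $U(\Q)$ supplying the bridge. You are in fact slightly more careful than the paper in one place: Lemma~\ref{stabVZ} only asserts $G(\Z)\cdot V_\Z\subseteq V_\Z$, and you explicitly pass from this to $g\cdot V_\Z=V_\Z$ by applying the lemma to $g^{-1}$, whereas the paper simply cites the lemma for $G(\Z)\subseteq\Gamma(\Z)$.
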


  {\it Proof:} We have $G(\Z)\cap U({\mathbb{Q}})\subseteq\Gamma(\Z)\cap U({\mathbb{Q}})$, since by Lemma~\ref{stabVZ}, $G(\Z)\subseteq\Gamma(\Z)$. For the reverse inclusion, let $\gamma\in \Gamma(\Z)\cap U({\mathbb{Q}})$. So $\gamma\in U({\mathbb{Q}})$ and $\gamma \cdot V_{\Z}=V_{\Z}$. Since $\gamma\in U({\mathbb{Q}})$, we have $\gamma=\prod_{\alpha\in\Delta^+} \chi_{\alpha}(t_{\alpha})$.  By Theorem~\ref{stabilizer0}, all $t_{\alpha}$ are integers. Hence $\gamma\in G(\Z)$. $\square$

\begin{theorem}\label{stabilizer2} Suppose that the set ${\rm wts}(V)$ of weights of $V$ contains all the fundamental weights. In  $G({\mathbb{Q}})$ we have 
$$G(\Z)\cap H({\mathbb{Q}})=\Gamma(\Z)\cap H({\mathbb{Q}})=H({\mathbb{Z}})=\langle h_i(-1)\mid i\in I\rangle.$$ 
\end{theorem}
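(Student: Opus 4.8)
The plan is to establish the three equalities in the chain
$G(\Z)\cap H(\Q)=\Gamma(\Z)\cap H(\Q)=H(\Z)=\langle h_i(-1)\mid i\in I\rangle$
by a combination of the structure of $H(\Q)$ from Theorem~\ref{toral} and the action of toral elements on weight vectors. Since $\mathrm{wts}(V)$ contains all fundamental weights, Corollary~\ref{C-injective} gives the crucial rigidity: the map $(t_1,\dots,t_\ell)\mapsto\prod_i h_i(t_i)$ is an injective homomorphism from $(\Q^\times)^\ell$ into $H(\Q)$. So every element of $H(\Q)$ has a \emph{unique} expression $\prod_{i\in I}h_i(t_i)$ with $t_i\in\Q^\times$, and it suffices to determine for which tuples $(t_i)$ the element $h=\prod_i h_i(t_i)$ satisfies $h\cdot V_\Z=V_\Z$.

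First I would handle the inclusion $H(\Z)\subseteq G(\Z)\cap H(\Q)$ and $\langle h_i(-1)\mid i\in I\rangle\subseteq H(\Z)$: these are immediate, since $h_i(-1)=\widetilde w_i(-1)\widetilde w_i(1)^{-1}$ (or directly from the definition) lies in $G(\Z)$, which stabilizes $V_\Z$ by Lemma~\ref{stabVZ}, so $H(\Z)\subseteq\Gamma(\Z)\cap H(\Q)$ as well. Also $\Z^\times=\{\pm1\}$ forces $H(\Z)=\langle h_i(-1)\mid i\in I\rangle$. The content is therefore in the reverse direction: if $h=\prod_i h_i(t_i)\in\Gamma(\Z)\cap H(\Q)$ then each $t_i=\pm1$.

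For that, fix $i\in I$ and let $v_{\omega_i}$ be a nonzero weight vector of weight $\omega_i$; by Lemma~\ref{lattwsd}, $V_{\omega_i,\Z}=V_\Z\cap V_{\omega_i}$ is a rank-one $\Z$-lattice, so I may take $v_{\omega_i}$ to be a generator, $V_{\omega_i,\Z}=\Z v_{\omega_i}$. Then $h\cdot v_{\omega_i}=\bigl(\prod_j t_j^{\langle\omega_i,h_j\rangle}\bigr)v_{\omega_i}=t_i\,v_{\omega_i}$, using $\langle\omega_i,h_j\rangle=\delta_{ij}$. Since $h$ stabilizes $V_\Z$ and hence stabilizes the weight-space lattice $V_{\omega_i,\Z}=\Z v_{\omega_i}$ (because $h$ preserves the weight-space decomposition of Lemma~\ref{lattwsd}), we get $t_i v_{\omega_i}\in\Z v_{\omega_i}$, so $t_i\in\Z$; applying the same argument to $h^{-1}=\prod_j h_j(t_j^{-1})$ gives $t_i^{-1}\in\Z$, hence $t_i\in\Z^\times=\{\pm1\}$. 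Thus $h=\prod_i h_i(\pm1)\in\langle h_i(-1)\mid i\in I\rangle=H(\Z)\subseteq G(\Z)$, which closes the chain of inclusions and proves all three equalities.

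The only mild subtlety — and the step I would be most careful about — is the claim that an element of $H(\Q)$ stabilizing $V_\Z$ must stabilize each weight-space lattice $V_{\mu,\Z}$ separately; this follows because $h$ acts as a scalar on each $V_\mu$ and $V_\Z=\bigoplus_\mu V_{\mu,\Z}$ by Lemma~\ref{lattwsd}, so $h(V_{\mu,\Z})\subseteq V_\mu\cap V_\Z=V_{\mu,\Z}$. With that in hand there is no real obstacle; the argument is essentially the $\SL_2$ computation of Section~\ref{integrality} applied simultaneously in the fundamental-weight directions, and the hypothesis on $\mathrm{wts}(V)$ is exactly what makes the evaluation at $v_{\omega_i}$ detect $t_i$ with no contamination from the other $t_j$.
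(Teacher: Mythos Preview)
Your proposal is correct and follows essentially the same approach as the paper: both arguments hinge on the observation that $h=\prod_j h_j(t_j)$ acts on $V_{\omega_i}$ as scalar multiplication by $t_i$, so stabilizing $V_\Z$ forces $t_i\in\Z^\times=\{\pm1\}$. Your write-up is in fact more careful than the paper's terse proof, explicitly invoking Corollary~\ref{C-injective} for uniqueness of the expression and checking that $h$ preserves each $V_{\mu,\Z}$ separately. One small imprecision: Lemma~\ref{lattwsd} does not say $V_{\omega_i,\Z}$ has rank one (the weight space $V_{\omega_i}$ need not be one-dimensional in general), but this is harmless---choosing $v_{\omega_i}$ to be any element of a $\Z$-basis of $V_{\omega_i,\Z}$ makes the argument go through unchanged.
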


  \begin{proof}
    It is immediate that $G(\Z)\cap H({\mathbb{Q}})=\langle h_i(-1)\mid i\in I\rangle$. The generator $h_i(t_i)$ acts on the weight space $V_{\omega_i}$ as scalar multiplication by $t_i$. Hence $h_i(t_i)$ preserves $V_{\Z}$ when $t\in\Z^{\times}$. Thus $\Gamma(\Z)\cap H({\mathbb{Q}})=\langle h_i(-1)\mid i\in I\rangle$.   
  \end{proof}

See also \cite{Steinberg}, Lemma 49(c), p.\ 114.

\begin{corollary}\label{stabilizer3} Suppose that the set ${\rm wts}(V)$ of weights of $V$ contains all the fundamental weights. In  $G({\mathbb{Q}})$ we have $G(\Z)\cap B({\mathbb{Q}})=\Gamma(\Z)\cap B({\mathbb{Q}})$. 
Thus any element of $B({\mathbb{Q}})$ that stabilizes $V_{\Z}$ lies in $G(\Z)$.
\end{corollary}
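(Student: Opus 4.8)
The plan is to reduce everything to Theorems~\ref{stabilizer1} and \ref{stabilizer2} via the decomposition $B(\mathbb{Q})=H(\mathbb{Q})U(\mathbb{Q})$. The inclusion $G(\Z)\cap B(\mathbb{Q})\subseteq\Gamma(\Z)\cap B(\mathbb{Q})$ is immediate from Lemma~\ref{stabVZ}, which gives $G(\Z)\subseteq\Gamma(\Z)$. For the reverse inclusion, take $\gamma\in\Gamma(\Z)\cap B(\mathbb{Q})$ and write it as $\gamma=hu$ with $h\in H(\mathbb{Q})$, $u\in U(\mathbb{Q})$. The whole proof then comes down to showing that $h$ and $u$ \emph{individually} stabilize $V_\Z$: granting this, Theorem~\ref{stabilizer2} gives $h\in\Gamma(\Z)\cap H(\mathbb{Q})=H(\Z)\le G(\Z)$ (this is the only place the hypothesis that $\wts(V)$ contains all fundamental weights is used), Theorem~\ref{stabilizer1} gives $u\in\Gamma(\Z)\cap U(\mathbb{Q})=G(\Z)\cap U(\mathbb{Q})\le G(\Z)$, and hence $\gamma=hu\in G(\Z)$.

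To separate the toral part from the unipotent part I would use the weight space decomposition $V_\Z=\bigoplus_{\mu}V_{\mu,\Z}$ of Lemma~\ref{lattwsd}, together with the fact that $U(\mathbb{Q})$ acts unipotently with respect to depth. Indeed, for $\alpha\in\Delta^+$ we have $x_\alpha\cdot V_\mu\subseteq V_{\mu+\alpha}$ with $\dep(\mu+\alpha)<\dep(\mu)$, so $(\chi_\alpha(t)-1)\cdot V_\mu\subseteq\bigoplus_{\dep(\mu')<\dep(\mu)}V_{\mu'}$, and the same holds for any product $u=\prod_{\alpha\in\Delta^+}\chi_\alpha(t_\alpha)$. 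Now fix $\mu\in\wts(V)$ and $v\in V_{\mu,\Z}$. Since $h$ is a product of elements $h_i(t_i)$, it preserves each weight space, so in $\gamma\cdot v=h\cdot v+h\cdot\bigl((u-1)\cdot v\bigr)$ the first term lies in $V_\mu$ and the second lies in $\bigoplus_{\dep(\mu')<\dep(\mu)}V_{\mu'}$; hence the $V_\mu$-component of $\gamma\cdot v$ is exactly $h\cdot v$. As $\gamma\cdot v\in V_\Z=\bigoplus_{\mu'}V_{\mu',\Z}$, that component lies in $V_{\mu,\Z}$, so $h\cdot v\in V_{\mu,\Z}$. Letting $v$ and $\mu$ vary yields $h\cdot V_\Z\subseteq V_\Z$.

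Applying the same argument to $\gamma^{-1}$, rewritten as $\gamma^{-1}=h^{-1}\bigl(hu^{-1}h^{-1}\bigr)$ with $hu^{-1}h^{-1}\in U(\mathbb{Q})$ because $H(\mathbb{Q})$ normalizes $U(\mathbb{Q})$, gives $h^{-1}\cdot V_\Z\subseteq V_\Z$, so in fact $h\cdot V_\Z=V_\Z$, i.e.\ $h\in\Gamma(\Z)$. Then $u=h^{-1}\gamma$ also stabilizes $V_\Z$, so $u\in\Gamma(\Z)\cap U(\mathbb{Q})$, and the reduction of the first paragraph finishes the equality $G(\Z)\cap B(\mathbb{Q})=\Gamma(\Z)\cap B(\mathbb{Q})$. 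The last sentence of the corollary is then just a restatement: an element of $B(\mathbb{Q})$ stabilizing $V_\Z$ lies in $\Gamma(\Z)\cap B(\mathbb{Q})=G(\Z)\cap B(\mathbb{Q})\le G(\Z)$.

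I expect this proof to be short, since the real content is already in Theorems~\ref{stabilizer0}, \ref{stabilizer1} and \ref{stabilizer2}. The one step needing care is the filtration/projection argument isolating $h$: one must verify that $U(\mathbb{Q})$ strictly raises weights (equivalently lowers depth), so that projecting onto a fixed weight space annihilates the unipotent contribution, and one must separately handle $h^{-1}$ using the normality of $U(\mathbb{Q})$ in $B(\mathbb{Q})$ in order to upgrade $h\cdot V_\Z\subseteq V_\Z$ to $h\cdot V_\Z=V_\Z$.
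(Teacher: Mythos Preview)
Your proof is correct and is exactly the argument the paper implicitly intends: the corollary is stated without proof, immediately after Theorems~\ref{stabilizer1} and~\ref{stabilizer2}, so it is meant to follow from $B(\Q)=H(\Q)U(\Q)$ together with those two results. Your weight-filtration argument (projecting $\gamma\cdot v$ onto $V_\mu$ to isolate $h\cdot v$, then using normality of $U(\Q)$ in $B(\Q)$ to handle $h^{-1}$) is the standard and expected way to separate the toral and unipotent factors, and nothing more is needed.
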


  This allows us to prove our main result.

\begin{theorem} Suppose that the set ${\rm wts}(V)$ of weights of $V$ contains all the fundamental weights. We have $\Gamma(\Z)=G({\mathbb{Z}})$. 
That is,  any element of $G({\mathbb{Q}})$ that stabilizes $V_{\Z}$ lies in the group~$G(\Z)=\langle\chi_{\a_i}(s),\chi_{-\a_i}(t)\mid s,t\in\Z,\ i\in I\rangle$.
\end{theorem}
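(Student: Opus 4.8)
The plan is to assemble the theorem from three facts already in hand: the Iwasawa-type factorization $G(\Q)=G(\Z)B(\Q)$ of Theorem~\ref{Bruhat}, the unipotent integrality Theorem~\ref{stabilizer1}, and the Borel integrality Corollary~\ref{stabilizer3} (which is where the hypothesis that $\mathrm{wts}(V)$ contains every fundamental weight is used, via the injectivity of the torus parametrization in Corollary~\ref{C-injective}). The inclusion $G(\Z)\subseteq\Gamma(\Z)$ is immediate: each generator $\chi_\a(s)$ with $s\in\Z$ and each $h_i(-1)$ preserves $V_\Z$ by Lemma~\ref{stabVZ}, and $\Gamma(\Z)$ is a subgroup of $G(\Q)$, so the content is the reverse inclusion.

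First I would take an arbitrary $\gamma\in\Gamma(\Z)$; in particular $\gamma\in G(\Q)$, so Theorem~\ref{Bruhat} lets me write $\gamma=gb$ with $g\in G(\Z)$ and $b\in B(\Q)$. Then $b=g^{-1}\gamma$. Since $g\in G(\Z)\subseteq\Gamma(\Z)$ and $\gamma\in\Gamma(\Z)$, and $\Gamma(\Z)$ is closed under inverses and products, I get $b\in\Gamma(\Z)$, hence $b\in\Gamma(\Z)\cap B(\Q)$. By Corollary~\ref{stabilizer3} this equals $G(\Z)\cap B(\Q)$, so $b\in G(\Z)$, and therefore $\gamma=gb\in G(\Z)$. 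This gives $\Gamma(\Z)=G(\Z)$. For the refinement of the generating set, I would then note that $h_i(-1)=\widetilde w_i(-1)\widetilde w_i(1)^{-1}$ lies in $\langle\chi_{\pm\a_i}(s)\mid s\in\Z\rangle$, and that every $\chi_\a(t)$ with $t\in\Z$ equals $\widetilde w\,\chi_{\a_i}(\pm t)\,\widetilde w^{-1}$ for a product $\widetilde w$ of the $\widetilde w_j$; combined with Lemma~\ref{integralimage} this shows $G(\Z)=\langle\chi_{\a_i}(s),\chi_{-\a_i}(t)\mid s,t\in\Z,\ i\in I\rangle$.

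Honestly, there is no real obstacle left at this stage: everything delicate has been front-loaded into the induction on the number of positive-root factors behind Theorem~\ref{stabilizer0}/\ref{stabilizer1} and the Bruhat-cell induction behind Theorem~\ref{Bruhat}. The only points that need care are bookkeeping ones — using that $\Gamma(\Z)$ is genuinely a subgroup so that $g^{-1}\gamma$ returns to $\Gamma(\Z)$, and explicitly invoking the fundamental-weight hypothesis at the moment Corollary~\ref{stabilizer3} is applied, since that is the sole place it enters the argument.
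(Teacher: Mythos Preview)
Your argument is correct and is essentially identical to the paper's own proof: take $\gamma\in\Gamma(\Z)$, factor it as $\gamma=gb$ via Theorem~\ref{Bruhat}, observe $b=g^{-1}\gamma\in\Gamma(\Z)\cap B(\Q)=G(\Z)\cap B(\Q)$ by Corollary~\ref{stabilizer3}, and conclude $\gamma\in G(\Z)$. Your additional paragraph on the generating set is not part of the paper's proof of this theorem but matches the content of the subsequent section on generating sets.
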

\begin{proof}
We have  $G(\Z)\subseteq G({\mathbb{Q}})=G(\Z)B({\mathbb{Q}})$ by Theorem~\ref{Bruhat}. Let $\gamma\in G({\mathbb{Z}})$. Then  $$\gamma=\gamma_0b\in G({\mathbb{Q}})=G(\Z)B({\mathbb{Q}})$$ with $\gamma_0\in G(\Z)$ and $b\in B({\mathbb{Q}})$. Thus $\gamma_0^{-1}\gamma=b\in B$ and by Lemma~\ref{stabVZ}, $G(\Z)\subseteq\Gamma(\Z)$, so $$\gamma_0^{-1}\gamma\in \Gamma(\Z)\cap B({\mathbb{Q}})=G(\Z)\cap B({\mathbb{Q}})$$ by Corollary~\ref{stabilizer3}. Thus $\gamma_0^{-1}\gamma\in G(\Z)$, so $\gamma\in G(\Z)$.  It follows that $\gamma_0\in G({\mathbb{Z}})$.
\end{proof}  

%Let $P$ be any parabolic subgroup of $G$, that is, $P$ is a subgroup containing a conjugate of $B$. 

%\begin{corollary}  
%We have $G({\mathbb{Q}})=G({\mathbb{Z}})P({\mathbb{Q}})$.
%\end{corollary}

%begin{corollary}\label{integralproduct} Let $g\in G(\mathbb{Z})$. Then there are elements $t_1,t_2,\dots t_k$ and roots $\alpha_{i_1}, \alpha_{i_2}\dots, \alpha_{i_k}$ such that 
%$$g=\chi_{\alpha_{i_1}}(t_1)\chi_{\alpha_{i_2}}(t_2)\dots \chi_{\alpha_{i_k}}(t_k)$$ %where $t_1,t_2,\dots t_k\in \mathbb{Z}$.
%\end{corollary}

%%%%%%%%%%%%%%%%%%%%%%%%%%%%%%%%%%%%%%%%%%
%%%%%%%%%%%%%%%%%%%%%%%%%%%%%%%%%%%%%%%%%%
\section{Generating sets for $G({\mathbb{Q}})$ and $G({\mathbb{Z}})$}
Our simply connected Chevalley group $G({\mathbb{Q}})$ is generated by $$\{ \chi_\a(u),\ h_i(t) \mid \a\in\Delta, u\in\Q,\  t\in \Q^\times,\ i\in I\}.$$
 This generating set is redundant and we may reduce it using Steinberg's group relations \cite{Steinberg}.

\begin{lemma}\label{reduction} Let $G({\mathbb{Q}})$ be a simply connected Chevalley group. Then 
\begin{align*}
    G({\mathbb{Q}})&=\langle\chi_{\alpha_i}(s),\ \widetilde{w}_{\alpha_i}(t),\ h_i(t)\mid s\in\Q,\ t\in\Q^{\times}, \ i\in I\rangle\\
&=\langle\chi_{\alpha_i}(s),\ \chi_{-\alpha_i}(u),\ h_i(t)\mid s,u\in\Q,\ t\in \Q^\times,\ i\in I\rangle.
\end{align*}
\end{lemma}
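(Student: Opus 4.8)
The plan is to start from the defining presentation
$G(\Q)=\langle \chi_\a(u),\,h_i(t)\mid \a\in\Delta,\ u\in\Q,\ i\in I,\ t\in\Q^\times\rangle$
and show that each of the two sets on the right generates exactly this group. Write $G'':=\langle\chi_{\alpha_i}(s),\widetilde w_{\alpha_i}(t),h_i(t)\mid s\in\Q,\ t\in\Q^\times,\ i\in I\rangle$ and $G':=\langle\chi_{\alpha_i}(s),\chi_{-\alpha_i}(u),h_i(t)\mid s,u\in\Q,\ t\in\Q^\times,\ i\in I\rangle$. Since $\widetilde w_{\alpha_i}(t)=\chi_{\alpha_i}(t)\chi_{-\alpha_i}(-t^{-1})\chi_{\alpha_i}(t)$ by (\ref{rootref}) and $-\alpha_i\in\Delta$, every listed generator of $G''$ and of $G'$ already lies in $G(\Q)$, so $G'',G'\subseteq G(\Q)$. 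The content of the lemma is the reverse inclusions, which reduce to producing every $\chi_\a(u)$ with $\a\in\Delta$ inside the smaller groups (the $h_i(t)$ appear on all three lists).

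First I would prove $G'=G(\Q)$. Fix $\a\in\Delta$; by Lemma~\ref{weroot} we may write $\a=w\alpha_i$ for some $w\in W$ and $i\in I$. Choose a reduced word $w=w_{j_1}\cdots w_{j_m}$ and form the lift $\widetilde w=\widetilde w_{j_1}\cdots\widetilde w_{j_m}$, where each $\widetilde w_{j}=\widetilde w_{\alpha_j}(1)=\chi_{\alpha_j}(1)\chi_{-\alpha_j}(-1)\chi_{\alpha_j}(1)$ is a product of simple root group elements; hence $\widetilde w\in G'$. By the conjugation identity displayed immediately after Lemma~\ref{ww}, $\chi_\a(u)=\widetilde w\,\chi_{\alpha_i}(\pm u)\,\widetilde w^{-1}$, and since $u$ ranges over all of $\Q$ the sign is immaterial, so $\chi_\a(u)\in G'$. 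Thus all defining generators of $G(\Q)$ lie in $G'$, giving $G(\Q)=G'$.

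Next I would deduce $G''=G(\Q)$ by showing $G'\subseteq G''$, since then $G(\Q)=G'\subseteq G''\subseteq G(\Q)$. The only generators of $G'$ not manifestly in $G''$ are the $\chi_{-\alpha_i}(u)$. Applying the same conjugation identity to the root $-\alpha_i=w_{\alpha_i}\alpha_i$ with the single-reflection lift $\widetilde w_{\alpha_i}(1)$ gives $\chi_{-\alpha_i}(u)=\widetilde w_{\alpha_i}(1)\,\chi_{\alpha_i}(\pm u)\,\widetilde w_{\alpha_i}(1)^{-1}$, and $\widetilde w_{\alpha_i}(1)$ is the generator $\widetilde w_{\alpha_i}(t)$ specialised at $t=1$. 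Hence $\chi_{-\alpha_i}(u)\in G''$ for every $u\in\Q$, so $G'\subseteq G''$ and $G''=G(\Q)$.

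The argument is essentially bookkeeping. The one point that needs care is the claim that the lift $\widetilde w$ assembled from an arbitrary reduced word of $w$ conjugates $\chi_{\alpha_i}$ onto $\chi_\a$ up to sign — this is exactly the conjugation relation recorded just after Lemma~\ref{ww}, which follows from $\widetilde w\,V_\mu=V_{w\mu}$ together with the fact that the $\widetilde w_j$ permute the root groups, so the sign ambiguity and the choice of reduced word cause no difficulty. Note also that only the action of $G(\Q)$ on $V$ is used, so no appeal to Steinberg's defining relations is actually needed for this reduction.
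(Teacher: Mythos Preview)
Your proof is correct and follows essentially the same approach as the paper: both rely on Lemma~\ref{weroot} to write any root as $\alpha=w\alpha_i$, use the conjugation identity $\chi_\alpha(u)=\widetilde w\,\chi_{\alpha_i}(\pm u)\,\widetilde w^{-1}$ recorded after Lemma~\ref{ww}, and observe that each $\widetilde w_{\alpha_i}(t)$ is a product of $\chi_{\pm\alpha_i}$'s. The only cosmetic difference is the order---the paper first establishes the $\widetilde w_{\alpha_i}(t)$ generating set and deduces the $\chi_{\pm\alpha_i}$ one from it, whereas you do the reverse---but the ingredients and logic are identical.
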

\begin{proof}
For all $\alpha\in \Delta$ there exists $w\in W$ such that $\a=w\a_i$ for some $i\in I$. We have 
$$\chi_{\a}(t)=\chi_{w\alpha_i}=\widetilde{w}  \chi_{\alpha_i}(\pm t)\widetilde{w}^{-1}$$
for a lifting $\widetilde{w}\in \widetilde{W}$ of $w\in W$ under the natural projection $\widetilde{W}\longrightarrow W$. The $\widetilde{w}_{\alpha_i}$ generate $\widetilde{W}$, thus the first equality follows. The second inequality is obvious since each $\widetilde{w}_{\alpha_i}$ is a product of $\chi_{\alpha_i}$'s and $\chi_{-\alpha_i}$'s.
\end{proof}

\begin{corollary} 
 The group $G(\mathbb{Z})$ has the following  generating sets:
\begin{enumerate}
    \item $\{\chi_{\alpha_i}(1),\ \chi_{-\alpha_i}(1),\ h_i(-1)\mid i\in I\}$, or
    \item $\{\chi_{\alpha_i}(1),\ \widetilde{w}_{\alpha_i}, \ h_i(-1)\mid i\in I\}$.
\end{enumerate}
\end{corollary}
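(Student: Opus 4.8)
The plan is to show that each of the two displayed sets generates precisely the subgroup $G(\Z)=\langle\chi_{\a}(u),\,h_i(-1)\mid\a\in\Delta,\ u\in\Z,\ i\in I\rangle$ of $G(\Q)$. One inclusion is free: the proposed generators all occur among the defining ones (take $u=\pm1\in\Z$), so the entire content is to manufacture an arbitrary defining generator $\chi_{\a}(u)$ from the smaller sets. Write $K_1$ and $K_2$ for the subgroups of $G(\Q)$ generated by the sets in (1) and (2).

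First I would reduce scalars. From $\chi_{\pm\a_i}(t)=\exp(t\rho(x_{\pm\a_i}))$ one has the additivity relation $\chi_{\pm\a_i}(s)\chi_{\pm\a_i}(t)=\chi_{\pm\a_i}(s+t)$, hence $\chi_{\pm\a_i}(u)=\chi_{\pm\a_i}(1)^{\,u}\in K_1$ for every $u\in\Z$. Together with the definition of $\widetilde{w}_{\a_i}$ in \eqref{rootref} this gives
\[
\widetilde{w}_{\a_i}=\widetilde{w}_{\a_i}(1)=\chi_{\a_i}(1)\,\chi_{-\a_i}(1)^{-1}\,\chi_{\a_i}(1)\in K_1,
\]
so every lift $\widetilde{w}\in\widetilde{W}$ of an element $w\in W$, being a product of the $\widetilde{w}_{\a_i}$, lies in $K_1$ as well.

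Next I would reduce to simple roots. By Lemma~\ref{weroot} an arbitrary $\a\in\Delta$ equals $w\a_i$ for some $w\in W$, $i\in I$, and then, exactly as in the proof of Lemma~\ref{reduction}, $\chi_{\a}(u)=\widetilde{w}\,\chi_{\a_i}(\pm u)\,\widetilde{w}^{-1}$, which lies in $K_1$ because $\chi_{\a_i}(\pm u)=\chi_{\a_i}(1)^{\pm u}\in K_1$ and $\widetilde{w}\in K_1$ by the previous step. As $h_i(-1)$ is also in $K_1$, every defining generator of $G(\Z)$ lies in $K_1$, so $K_1=G(\Z)$, proving (1). (One may note in passing that $h_i(-1)=\widetilde{w}_{\a_i}(-1)\,\widetilde{w}_{\a_i}(1)^{-1}$ with $\widetilde{w}_{\a_i}(-1)=\chi_{\a_i}(1)^{-1}\chi_{-\a_i}(1)\chi_{\a_i}(1)^{-1}$, so the elements $h_i(-1)$ are in fact redundant in (1); I would remark on this but it is not needed for the statement.)

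For the equivalence of (1) and (2): the displayed identity already gives $\widetilde{w}_{\a_i}\in K_1$, while solving it for $\chi_{-\a_i}(1)$ gives $\chi_{-\a_i}(1)=\chi_{\a_i}(1)\,\widetilde{w}_{\a_i}^{-1}\,\chi_{\a_i}(1)\in K_2$; hence each generating set lies in the group generated by the other, and $K_1=K_2$. I do not expect a genuine obstacle here — the only ingredients are the additivity of the $\chi_{\a_i}$, the Weyl-conjugation formula $\chi_{w\a_i}(t)=\widetilde{w}\,\chi_{\a_i}(\pm t)\,\widetilde{w}^{-1}$ from Section~\ref{universal}, and Lemma~\ref{weroot}, all already at hand; the only thing demanding care is the bookkeeping of signs and inverses.
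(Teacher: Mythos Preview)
Your proof is correct and follows the same approach as the paper: the paper's proof simply invokes the exponential rule $\chi_{\alpha_i}(s)=\chi_{\alpha_i}(1)^s$ and then says ``(2) follows immediately from (1),'' which is exactly your additivity-plus-Weyl-conjugation argument with the details written out. Your parenthetical observation that the $h_i(-1)$ are in fact always redundant in (1) is also correct and slightly sharpens the paper's more cautious post-proof remark that this happens ``in some cases.''
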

%The generating set (2) is the analog of the $S,T$-generating set for $SL_2$.
\begin{proof}
    (1) is an immediate consequence of the exponential rule $$\chi_{\alpha_i}(s)=\chi_{\alpha_i}(1+1+\dots +1)=\chi_{\alpha_i}(1)^s$$ for $s\in\Z$. (2) follows immediately from (1).
\end{proof}

 In some cases (such as $G(\Z)=\SL_2(\Z)$), the elements $h_i(-1)$, for $ i\in I$, are not required as explicit generators of $G(\Z)$. 
%Lemma 26 (Steinberg) (Page 18, typed notes). Let G be the Chevalley group $\langle\chi_{\a}\mid \a>0\rangle$. Then $G$ is generated by all $\chi_{\a}$ for for $\a$ a simple root. 

%%%%%%%%%%%%%%%%%%%%%%%%%%%%%%%%%%%%%%%%%%
%%%%%%%%%%%%%%%%%%%%%%%%%%%%%%%%%%%%%%%%%%
\section{Integrality of inversion subgroups of Kac--Moody groups}\label{inversion}

For the remaining two sections, we assume that $\fg$ is a symmetrizable Kac--Moody algebra over $\C$ and that $(V,\rho)$ is an integrable $\fg$-module. That is, $V$ is a weight module and the Chevalley--Serre generators of $\fg$, denoted $e_i$ and $f_i$ for $i\in I$, act locally nilpotently on any $x\in V$. That is, for   each $x\in V$, there exists a natural number $n=n(i, x)$ such that
$\rho (e_i)^n\cdot x=\rho (f_i)^n\cdot x=0.$

In order to extend our representation-theoretic  results to a proof of integrality for Kac--Moody groups, we considered an alternate approach in \cite{ACLM}. We proved  integrality of inversion subgroups of $U(\Q)$, which are defined as follows.  
For $w$ in the Weyl group $W$, the {\it inversion subgroup} $U_{(w)}(\Q)\leq U(\Q)$ is 
 $$U_{(w)}(\Q)=\langle U_{\beta}(\Q)\mid \beta\in \Phi_{(w)}\rangle,$$ 
 where
$$
 \Phi_{(w)}=\{\beta\in \Delta^{\re}_{+}\mid w^{-1}\beta\in \Delta_{-}\}$$ 
 and $U_\beta(\Q)=\{ \chi_\beta(t)\mid t\in\Q\}$ where
$\chi_{\beta}(t)=\widetilde{w}\chi_{\alpha_i}(\pm t)\widetilde{w}^{-1}$ and $\beta=w\a_i$ for some $w\in W$. If  $w\in W$ has the reduced expression $w_{i_1}w_{i_2}\cdots w_{i_k}$ then $\Phi_{(w)}$ has cardinality $k=\ell(w)$. We set
$$U_{(w)}(\Z) = U_{(w)}(\Q)\cap U(\Z).$$

\begin{theorem}[\cite{ACLM}]\label{KMint} Let $U(\Q)$ be the positive unipotent subgroup of $G(\Q)$. For any $w\in W$,
$U_{(w)}(\Q)$ is  integral. That is for all $g\in U_{(w)}(\Q)$, $g\cdot V_\Z \subseteq V_\Z$ implies that $g\in U_{(w)}(\Z)$.\end{theorem}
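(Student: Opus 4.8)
The plan is to mimic the proof of Theorem~\ref{stabilizer0}, adapted to the Kac--Moody setting by exploiting that every root of $\Phi_{(w)}$ is \emph{real}. Fix a reduced expression $w = w_{i_1}\cdots w_{i_k}$; it orders the inversion set as $\Phi_{(w)} = \{\beta_1,\dots,\beta_k\}$ with $\beta_j = w_{i_1}\cdots w_{i_{j-1}}\alpha_{i_j}$, and every $g\in U_{(w)}(\Q)$ has a unique normal form $g = \prod_{j=1}^{k}\chi_{\beta_j}(t_j)$, with $U_{(w)}(\Z)$ being exactly the set of such $g$ all of whose coordinates $t_j$ lie in $\Z$ (from the integral commutator relations among real root groups, following \cite{ACLM}). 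One inclusion is routine: integral coordinates put $g$ into $G_V(\Z)$, and since each divided power $x_\beta^{(m)}$ for $\beta$ real lies in $\calU_\Z$ and $V_\Z = \calU_\Z\cdot v_\lambda$ is $\calU_\Z$-stable, $G_V(\Z)$ preserves $V_\Z$ (the analog of Lemma~\ref{stabVZ}). So the work is the converse: if the normal form of $g$ preserves $V_\Z$, then all $t_j\in\Z$. I would prove this by induction on $k = \ell(w)$.

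For $k = 1$ one has $U_{(w)}(\Q) = U_{\alpha_{i_1}}(\Q)$, and the argument is that of Lemma~\ref{rgi}, now with $\fg$ Kac--Moody and $V$ integrable in place of finite-dimensional: pick a weight $\mu$ of $V$ (for instance a dominance-maximal one with $n := \langle\mu,\alpha_{i_1}^\vee\rangle > 0$), so that $\mu+\alpha_{i_1}$ is not a weight; integrability ensures the $\alpha_{i_1}$-string through $\mu$ is finite, so $x_{-\alpha_{i_1}}^{(n)}v_\mu\in V_\Z$, and the weight-$\mu$ component of $\chi_{\alpha_{i_1}}(t_1)\cdot x_{-\alpha_{i_1}}^{(n)}v_\mu$ equals $t_1^{n}\,x_{\alpha_{i_1}}^{(n)}x_{-\alpha_{i_1}}^{(n)}v_\mu = t_1^{n}v_\mu$ by Lemma~\ref{L-xx}, forcing $t_1^{n}\in\Z$, hence $t_1\in\Z$.

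For the inductive step I would first extract $t_1$ and then reduce. Because $\beta_1 = \alpha_{i_1}$ is \emph{simple}, no positive-integral combination of $\beta_2,\dots,\beta_k$ is a nonzero multiple of $\alpha_{i_1}$; hence, applying $g = \chi_{\alpha_{i_1}}(t_1)\prod_{j\ge 2}\chi_{\beta_j}(t_j)$ to the same vector $x_{-\alpha_{i_1}}^{(n)}v_\mu$, the only contribution to the weight-$\mu$ component comes from the identity terms of the trailing factors together with the $x_{\alpha_{i_1}}^{(n)}$ term of $\chi_{\alpha_{i_1}}(t_1)$, so it is again $t_1^{n}v_\mu$ and $t_1\in\Z$. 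Now $g_1 := \chi_{\alpha_{i_1}}(-t_1)\,g = \prod_{j\ge 2}\chi_{\beta_j}(t_j)$ still preserves $V_\Z$. Since $\Phi_{(w)}\setminus\{\alpha_{i_1}\} = w_{i_1}\,\Phi_{(w_{i_1}w)}$ with $\ell(w_{i_1}w) = k-1$, conjugating $g_1$ by the lift $\widetilde w_{i_1}\in G_V(\Z)$ (which preserves $V_\Z$) produces $\prod_{j\ge 2}\chi_{w_{i_1}\beta_j}(\pm t_j)$, which is precisely the normal form in $U_{(w_{i_1}w)}(\Q)$ for the reduced word $w_{i_2}\cdots w_{i_k}$, and this element preserves $V_\Z$. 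By the induction hypothesis applied to $w_{i_1}w$, each $\pm t_j\in\Z$ for $j\ge 2$, and the induction closes.

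The main difficulty is that the finite-dimensional $\mathfrak{sl}_2$ arguments of the semisimple case must be made to work inside the infinite-dimensional module $V$; this succeeds precisely because every root of $\Phi_{(w)}$ is real, so carries an $\mathfrak{sl}_2$ acting integrably on $V$ — this is what supplies the string weight $\mu$, the divided-power identity of Lemma~\ref{L-xx}, and the membership $x_\beta^{(m)}\in\calU_\Z$. Imaginary root groups carry no such $\mathfrak{sl}_2$, so neither the normal form nor the coordinate-extraction step is available for them; closing that gap is exactly the open problem discussed in Section~\ref{KMcase}. A secondary point that needs care, but is routine, is the bookkeeping that $\Phi_{(w)} = \{\alpha_{i_1}\}\sqcup w_{i_1}\,\Phi_{(w_{i_1}w)}$ and that conjugation by $\widetilde w_{i_1}$ carries the ordered normal form of $U_{(w)}(\Q)$ to that of $U_{(w_{i_1}w)}(\Q)$ up to signs on the coordinates.
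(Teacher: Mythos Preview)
The paper does not contain a proof of Theorem~\ref{KMint}; the result is quoted from \cite{ACLM} and only its statement and its consequence for the finite-type case are recorded here. There is therefore no proof in this paper to compare your proposal against.

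On its own merits, your argument is the natural one and is essentially correct. Ordering $\Phi_{(w)}$ via a reduced word so that $\beta_1=\alpha_{i_1}$ is simple, then isolating $t_1$ in a single weight component and conjugating by $\widetilde w_{i_1}\in G_V(\Z)$ to pass to $U_{(w_{i_1}w)}$ with $\ell(w_{i_1}w)=\ell(w)-1$, is exactly what makes inversion subgroups tractable: after each conjugation the new leading root is again simple. Your key combinatorial claim --- that $\sum_{j\ge2}\Z_{\ge0}\beta_j$ meets $\Z\alpha_{i_1}$ only in $0$ --- is correct, since every $\beta_j$ with $j\ge2$ is a positive root distinct from $\alpha_{i_1}$ and hence involves some other simple root with strictly positive coefficient.

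Two points deserve care when writing this out. First, to deduce $t_1^{\,n}\in\Z$ from $t_1^{\,n}v_\mu\in V_{\mu,\Z}$ you need $v_\mu$ to be primitive in the lattice $V_{\mu,\Z}$; taking $\mu=\lambda$ (so that $V_{\lambda,\Z}=\Z v_\lambda$) handles this cleanly, but then one needs $n=\langle\lambda,h_{i_1}\rangle>0$, i.e.\ $\lambda$ regular, which is the standing hypothesis in the analogous finite-type argument of Theorem~\ref{stabilizer0}. If $\lambda$ is not regular one must choose $\mu$ more carefully and verify primitivity separately. Second, the weight-isolation step must account for \emph{all} monomials in the expansion of $\prod_{j\ge2}\chi_{\beta_j}(t_j)$, not just single root vectors; this still works, because every such monomial shifts the weight by an element of the monoid $\sum_{j\ge2}\Z_{\ge0}\beta_j$, and the combinatorial claim above applies to that whole monoid.
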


Theorem~\ref{KMint} holds in the case that $\mathfrak g$ is a semisimple Lie algebra and  $V$ is any highest weight $\mathfrak g$-module.  In this case, the Weyl group $W$ is finite and $W$ contains the longest element, denoted $w_{0}$, which flips all positive roots to negative roots. That is $\Phi_{(w_{0})}=\Delta^{+}$, which gives $U_{(w_{0})}=U(\Q)$.
It follows that $G(\Z)\cap U(\Q)=U(\Z)$.

Unfortunately, Kac--Moody algebras have infinite Weyl groups and there is no analog of the longest element which flips all positive roots to negative roots. Hence this method cannot be used to prove integrality of unipotent subgroups in Kac--Moody groups.

%%%%%%%%%%%%%%%%%%%%%%%%%%%%%%%%%%%%%%%%%%
%%%%%%%%%%%%%%%%%%%%%%%%%%%%%%%%%%%%%%%%%%
\section{Integrality of Kac--Moody groups  over $\Q$}\label{KMcase}

Part of the motivation for this work is to extend the proof of integrality of Chevalley groups given here to integrality of representation theoretic Kac--Moody groups $G_V(\Q)$ over $\Q$. The group $G_V(\Q)$ is constructed  with respect to an integrable highest weight module $V=V^{\lambda}$ for the underlying symmetrizable Kac--Moody algebra $\mathfrak{g}$, with dominant integral highest weight~$\lambda$.

Over a general commutative ring $R$, there is still no widely agreed upon definition of a Kac--Moody group $G(R)$, except in the affine case (\cite{Allcock}, \cite{Garland}). It is expected be some sort of generalization of the notion of a Chevalley--Demazure group scheme. The groundwork for the notion of a Kac--Moody group over an arbitrary commutative ring was provided by Tits who defined a functor $\widetilde{\mathfrak{G}}$ from commutative rings to groups \cite{Ti87}. 

There are two natural candidates for a representation theoretic Kac--Moody group over $\Z$:
\begin{itemize}
\item A representation theoretic construction $G_V(\Z)$ of the value of the  Tits functor $\widetilde{\mathfrak{G}}$  over $\Z$. 
\item The subgroup $\Gamma_V(\Z)=\{g\in G_V(\Q)\mid g\cdot V_{\Z}=V_\Z\}$  preserving an integral lattice $V_{\Z}$ in~$V$.
\end{itemize}

A proof of integrality for $G_V(\Q)$ would therefore give a unique definition of representation theoretic Kac--Moody groups over $\Z$.

There are certain difficulties and open questions  in answering this question for Kac--Moody groups{\footnote{An earlier preprint by a subset of the authors (arXiv:1803.11204v2 [math.RT] Proposition 6.4) contains an error.}}. For example:

\begin{enumerate}
    \item It is not known how to find a $\Z$-basis of $\frak{g}_{\a, \Z}$ for every $\a\in \Delta^+$, such that all structure constants with respect to this basis are integral.
    \item It is not known how to find a $\Z$-basis for $V_\mu^\lambda$ for every weight $\mu$ of $V^\lambda$ such that all structure constants with respect to this basis are integral.
    \item It is not known if the inclusion 
    $\mathfrak{g}_{\a,\Z}\otimes_\Z \C=
    (\mathfrak{g}_{\a}\cap \mathcal{U}_{\Z})\otimes_\Z \C \subseteq \frak g_\a$ is an equality. That this, if  $\mathfrak{g}_{\a,\Z}$ spans $\mathfrak{g}_{\a}$. 
\end{enumerate}

A solution to (1) would give a solution to (2) and 
a solution to (3) would give a solution to (1).
An answer to these questions would allow us to prove an analog of Theorem~\ref{stabilizer1} for Kac--Moody groups. We hope that we will be able to prove integrality of Kac--Moody groups over $\Q$ in future work.

\bibliographystyle{amsalpha}
\bibliography{GPmath}{}

\end{document}